\begin{document}

\newcommand{ \bl}{\color{blue}}
\newcommand {\rd}{\color{red}}
\newcommand{ \bk}{\color{black}}
\newcommand{ \gr}{\color{OliveGreen}}
\newcommand{ \mg}{\color{RedViolet}}

\newcommand{\norm}[1]{||#1||}
\newcommand{\normo}[1]{|#1|}
\newcommand{\secn}[1]{\addtocounter{section}{1}\par\medskip\noindent
 {\large \bf \thesection. #1}\par\medskip\setcounter{equation}{0}}

\newcommand{\secs}[1]{\addtocounter {section}{1}\par\medskip\noindent
 {\large \bf  #1}\par\medskip\setcounter{equation}{0}}
\renewcommand{\theequation}{\thesection.\arabic{equation}}

\setlength{\baselineskip}{16pt}

\newtheorem{theorem}{Theorem}[section]
\newtheorem{lemma}{Lemma}[section]
\newtheorem{proposition}{Proposition}[section]
\newtheorem{definition}{Definition}[section]
\newtheorem{example}{Example}[section]
\newtheorem{corollary}{Corollary}[section]

\newtheorem{remark}{Remark}[section]

\numberwithin{equation}{section}

\def\p{\partial}
\def\I{\textit}
\def\R{\mathbb R}
\def\C{\mathbb C}
\def\u{\underline}
\def\l{\lambda}
\def\a{\alpha}
\def\O{\Omega}
\def\e{\epsilon}
\def\ls{\lambda^*}
\def\D{\displaystyle}
\def\wyx{ \frac{w(y,t)}{w(x,t)}}
\def\imp{\Rightarrow}
\def\tE{\tilde E}
\def\tX{\tilde X}
\def\tH{\tilde H}
\def\tu{\tilde u}
\def\d{\mathcal D}
\def\aa{\mathcal A}
\def\DH{\mathcal D(\tH)}
\def\bE{\bar E}
\def\bH{\bar H}
\def\M{\mathcal M}
\renewcommand{\labelenumi}{(\arabic{enumi})}

\def\disp{\displaystyle}
\def\undertex#1{$\underline{\hbox{#1}}$}
\def\card{\mathop{\hbox{card}}}
\def\sgn{\mathop{\hbox{sgn}}}
\def\exp{\mathop{\hbox{exp}}}
\def\OFP{(\Omega,{\cal F},\PP)}
\newcommand\JM{Mierczy\'nski}
\newcommand\RR{\ensuremath{\mathbb{R}}}
\newcommand\CC{\ensuremath{\mathbb{C}}}
\newcommand\QQ{\ensuremath{\mathbb{Q}}}
\newcommand\ZZ{\ensuremath{\mathbb{Z}}}
\newcommand\NN{\ensuremath{\mathbb{N}}}
\newcommand\PP{\ensuremath{\mathbb{P}}}
\newcommand\abs[1]{\ensuremath{\lvert#1\rvert}}

\newcommand\normf[1]{\ensuremath{\lVert#1\rVert_{f}}}
\newcommand\normfRb[1]{\ensuremath{\lVert#1\rVert_{f,R_b}}}
\newcommand\normfRbone[1]{\ensuremath{\lVert#1\rVert_{f, R_{b_1}}}}
\newcommand\normfRbtwo[1]{\ensuremath{\lVert#1\rVert_{f,R_{b_2}}}}
\newcommand\normtwo[1]{\ensuremath{\lVert#1\rVert_{2}}}
\newcommand\norminfty[1]{\ensuremath{\lVert#1\rVert_{\infty}}}
\newcommand{\ds}{\displaystyle}

\title{Pointwise persistence in  full chemotaxis models with logistic source on bounded heterogeneous environments}

\author[1]{Tahir Bachar Issa}
\author[2]{Wenxian Shen\thanks{Partially supported by the NSF grant DMS--1645673}}
\affil[1]{Data Institute, University of San Francisco, CA 94117, USA.}
\affil[2]{Department of Mathematics and Statistics, Auburn University, AL 36849, USA.}

\date{}
\maketitle

\noindent {\bf Abstract.} {The current paper is concerned with pointwise persistence  in   full chemotaxis models with local as well as nonlocal time and space dependent logistic source in  bounded domains. We first prove the global existence and boundedness of  nonnegative classical solutions under some conditions on the coefficients in the models.
Next, under the same conditions on the coefficients, we show that pointwise { persistence} occurs, that is, any globally defined positive solution is bounded below by a positive constant independent of its initial condition when the time is large enough.
It should be pointed out that in \cite{TaoWin15}, the authors established the persistence of mass for globally defined positive solutions, which
 indicates that any extinction phenomenon, if occurring at all, necessarily
must be
spatially local { in }nature, whereas the population as a whole always persists. The pointwise persistence proved in the current paper implies that
not only the population as a whole persists,  but also it persists at any location
eventually. It also implies the existence of strictly positive entire solutions.
}
\medskip

\noindent {\bf Key words.} {Full chemotaxis model, global existence,  pointwise persistence,  positive entire solutions, comparison principle.}

\medskip

\noindent {\bf 2010 Mathematics Subject Classification.} 35A01, 35A02, 35B40, {35K57, 35Q92, 92C17}

\section{Introduction and the statements of the main results}
\label{S:intro}
In this paper, we study the dynamics of the following full chemotaxis model,
\begin{equation}
 \label{u-v-eq00}
\begin{cases}
u_t=\Delta u-\chi\nabla\cdot (u \nabla v)+u\Big(a_0(t,x)-a_1(t,x)u-a_2(t,x)\int_{\Omega}u\Big),\quad x\in \Omega\cr
\tau v_t=\Delta v-\lambda v +\mu u,\quad x\in \Omega \cr
\frac{\p u}{\p n}=\frac{\p v}{\p n}=0,\quad x\in\p\Omega,
\end{cases}
 \end{equation}
where $\Omega \subset \mathbb{R}^n(n\geq 1)$ is a bounded domain with smooth boundary; $u(x,t)$ represents the population density
of a  mobile species, and $v(x,t)$ is the population density of the chemical substance created by the mobile species;   $\chi \in \mathbb{R}$ represents
the chemotactic sensitivity  effect on the mobile species;  $\tau$ is a positive constant related to the diffusion rate of the chemical substance; and  $\lambda$  represents the degradation rate of the {  chemical substance} and $\mu$ is the rate at which the mobile species produces the {chemical substance}.
 The term $ u\big(a_0(t,x)-a_1(t,x)u-a_2(t,x)\int_{\Omega}u\big)$  in the first equation of \eqref{u-v-eq00}
 is referred to as the logistic source or
 logistic growth describing  the competition of the individuals of
the species for the resources of the environment and the cooperation
to survive. The  coefficient $a_0$ induces an exponential growth for
low density population and the term $a_1u$ describes a local competition of the species. { When the population density is large}, the
competitive effect of the local term $a_1u$  becomes more influential. The non-local term $a_2\int_{\Omega}u$ describes the influence of the total mass of the species in the growth of the population. If $a_2>0,$ we have a competitive term which limits such growth and when $a_2<0$ the individuals cooperate globally to survive. In the last case, the individuals compete locally but cooperate globally and the effects of $a_1u$ and $a_2\int_{\Omega}u$ balance the system.

Chemotaxis, the oriented movements of mobile species toward the increasing or decreasing concentration of a signaling chemical substance,  has a crucial role in a wide range of biological phenomena such as immune system response, embryo development, tumor growth,  population dynamics, gravitational collapse, etc. (see  \cite{ISM04, DAL1991}).
 At the beginning of 1970s, Keller and Segel proposed a celebrated mathematical model, referred to as the classical Keller-Segel model, to describe the aggregation process of Dictyostelium discoideum, a soil-living amoebea \cite{KS1970, KS71}.  System \eqref{u-v-eq00} with $a_i \equiv 0$ ($i=0,1,2$) reduces to the classical Keller-Segel model.

 {  Consider \eqref{u-v-eq00}.  Central problems include
 global existence of classical/weak solutions with given initial  data;  finite-time blow-up; asymptotic behavior of globally defined solutions such as persistence and convergence as time goes to infinity;  etc.
   A large amount of research has been carried out toward many of these central problems in various chemotaxis models (see  \cite{NBYTMW05, THKJP09, H03, Win2013} for some  survey on the study of various chemotaxis models)}.
  For example, it is well known that finite-time blow-up of some classical solution may occur in the classical Keller-Segel model and its variants in space dimension $n\geq 2$  (see \cite{CoEsVi11, HVJ1997a, JaW92, W2011b} for one species chemotaxis model and \cite{ASV2009} for two species chemotaxis models). It is   known that logistic sources of Lotka-Volterra type may preclude such blow-up phenomenon (see \cite{ITBWS16, RBSWS17a, TW07} for one species and \cite{ITBRS17, ITBWS17a, ITBWS17b, NT13, TW12} for two species), and that, at least numerically, chemotaxis with logistic sources may also exhibit quite a rich variety of colorful dynamical features, up to periodic and even chaotic dynamics (see  \cite{kuto_PHYSD, PaHi}).

  The objective of the current paper is to investigate the {\it pointwise persistence} in \eqref{u-v-eq00}, which is motivated by the works \cite{PaHi} and \cite{TaoWin15}. In \cite{PaHi}, spatio-temporal chaotic dynamics in some special case of \eqref{u-v-eq00} is studied numerically.
    { A  phenomenon suggested by the numerical simulations in \cite{PaHi} consists in the ability of (1.1) to enforce
asymptotic smallness of the cell population density, undistinguishable from extinction, in large
spatial regions (see e.g. Fig. 7(d) in \cite{PaHi}). As commented in \cite{TaoWin15}, in the case that $a_0(t,x)\equiv a_0>0$, $a_1(t,x)\equiv a_1>0$, and $a_2(t,x)\equiv 0$,  such types of solution behavior, seemingly paradoxical
due to the presence of the reproduction term $a_0u$ dominating e.g. the death term  $-a_1u^2$  at
small
densities, clearly reflect a truly cross-diffusive effect  in  view of the evident fact that when
$\chi=0$,
all positive solutions of the resulting decoupled problem approach the spatially homogeneous nontrivial
state $(\frac{a_0}{a_1},\frac{\mu}{\lambda}\frac{a_0}{a_1})$. In \cite{TaoWin15}, the authors
proved that any such extinction phenomenon must be
localized in space, and  that the population as a whole always persists, which is called {\it  persistence of mass} in \cite{TaoWin15}.
Both mathematically and biologically, it is interesting to know
 whether the population actually persists pointwise. In this paper, we will give a confirmed answer for parameters
in certain region, which implies that the cell population may become very small at some time and some location, but it persists at any location
eventually. }

{ To state our main results on the pointwise persistence in \eqref{u-v-eq00}, we first present the following lemma on   the maximal Sobolev regularity.

\begin{lemma}\cite[Lemma 2.2]{JZYYLGBXZ2018}
\label{prelimiaries-lm-00}
Suppose $\gamma \in (1,+\infty)$ and $g \in L^\gamma((0,T);L^\gamma(\Omega)).$  Assume that $v$ is a solution of the following initial boundary value problem,
\begin{equation}
\label{prelimiaries-eq-00}
 \begin{cases}
\tau v_t-\Delta v+\lambda v=g,\cr
\frac{\p v}{\p n}=0,\cr
v(x,0)=v_0(x).
\end{cases}
 \end{equation}
 Then there exists a positive constant $C_{\gamma}$ such that if $s_0 \in [0,T),$ $v(\cdot,s_0) \in W^{2,\gamma}(\Omega)$ with $\frac{\p v(\cdot,s_0)}{\p n}=0,$ then
 \begin{align}
\label{prelimiaries-eq-01}
   &\int_{s_0}^Te^{\gamma s}\|\Delta v(\cdot,s)\|^\gamma_{L^\gamma(\Omega)} ds \nonumber\\
  &\leq C_{\gamma} \Big(\int_{s_0}^Te^{\gamma s}\|g(\cdot,s)\|^\gamma_{L^\gamma(\Omega)} ds+e^{\gamma s_0}\big(\|v(\cdot,s_0)\|^\gamma_{L^\gamma(\Omega)}+\|\Delta v(\cdot,s_0)\|^\gamma_{L^\gamma(\Omega)}\big)\Big).
\end{align}
\end{lemma}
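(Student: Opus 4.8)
The weight $e^{\gamma s}$ together with the $\gamma$-th power in \eqref{prelimiaries-eq-01} is designed precisely so that the weight can be absorbed by the exponential substitution $w(x,s):=e^{s}v(x,s)$, after which the asserted estimate becomes a plain instance of the classical maximal $L^\gamma$-Sobolev regularity for a linear parabolic Neumann problem. The plan is therefore as follows. First, I would record the equation satisfied by $w$: since $v=e^{-s}w$ gives $\tau v_s=e^{-s}(\tau w_s-\tau w)$, $\Delta v=e^{-s}\Delta w$ and $\lambda v=\lambda e^{-s}w$, multiplying the first equation in \eqref{prelimiaries-eq-00} by $e^{s}$ yields
\begin{equation*}
\tau w_s-\Delta w+(\lambda-\tau)\,w=e^{s}g,\qquad \frac{\partial w}{\partial n}=0,\qquad w(\cdot,s_0)=e^{s_0}v(\cdot,s_0),
\end{equation*}
a parabolic equation of the very same type, with the bounded zeroth-order coefficient $\lambda-\tau$ and the new forcing $e^{s}g\in L^\gamma((s_0,T);L^\gamma(\Omega))$.

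Second, I would apply the standard maximal Sobolev regularity estimate (in the form routinely used in the chemotaxis literature) for the operator $\tau\partial_s-\Delta+(\lambda-\tau)$ with homogeneous Neumann boundary conditions on the cylinder $\Omega\times(s_0,T)$: since $w(\cdot,s_0)\in W^{2,\gamma}(\Omega)$ with $\partial w(\cdot,s_0)/\partial n=0$, this provides a constant $C_\gamma$ such that
\begin{equation*}
\int_{s_0}^{T}\|\Delta w(\cdot,s)\|_{L^\gamma(\Omega)}^{\gamma}\,ds\le C_\gamma\Big(\int_{s_0}^{T}\|e^{s}g(\cdot,s)\|_{L^\gamma(\Omega)}^{\gamma}\,ds+\|w(\cdot,s_0)\|_{W^{2,\gamma}(\Omega)}^{\gamma}\Big).
\end{equation*}
Third, I would translate everything back in terms of $v$: from $\Delta w=e^{s}\Delta v$ one has $\|\Delta w(\cdot,s)\|_{L^\gamma(\Omega)}^{\gamma}=e^{\gamma s}\|\Delta v(\cdot,s)\|_{L^\gamma(\Omega)}^{\gamma}$, and likewise $\|e^{s}g(\cdot,s)\|_{L^\gamma(\Omega)}^{\gamma}=e^{\gamma s}\|g(\cdot,s)\|_{L^\gamma(\Omega)}^{\gamma}$, while elliptic $L^\gamma$-regularity for the Neumann Laplacian on the bounded smooth domain $\Omega$ gives $\|w(\cdot,s_0)\|_{W^{2,\gamma}(\Omega)}^{\gamma}\le C\big(\|w(\cdot,s_0)\|_{L^\gamma(\Omega)}^{\gamma}+\|\Delta w(\cdot,s_0)\|_{L^\gamma(\Omega)}^{\gamma}\big)=C\,e^{\gamma s_0}\big(\|v(\cdot,s_0)\|_{L^\gamma(\Omega)}^{\gamma}+\|\Delta v(\cdot,s_0)\|_{L^\gamma(\Omega)}^{\gamma}\big)$. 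Inserting these identities into the previous inequality produces exactly \eqref{prelimiaries-eq-01}.

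In this scheme there is essentially no hard step: the whole content is the classical maximal $L^\gamma((s_0,T);L^\gamma(\Omega))$-regularity of the Neumann heat equation, namely the fact that $w_s$ and $\Delta w$ inherit $L^\gamma(L^\gamma)$-integrability from the right-hand side, which I would simply quote. The one point that genuinely requires care is the dependence of the constant $C_\gamma$ on the data, and in particular whether it can be taken independent of the interval length: since the residual zeroth-order coefficient $\lambda-\tau$ need not be sign-definite, the cleanest way to keep $C_\gamma$ from deteriorating as $T\to\infty$ is to move $(\lambda-\tau)w$ to the right-hand side, exploit that $-\Delta+\lambda$ with Neumann conditions is strictly positive and therefore generates an exponentially decaying analytic semigroup (whose maximal regularity constant is interval-independent), and then absorb the extra term $\int_{s_0}^{T}e^{\gamma s}\|v(\cdot,s)\|_{L^\gamma(\Omega)}^{\gamma}\,ds$ back into the right-hand side of \eqref{prelimiaries-eq-01} by a routine $L^\gamma$ energy estimate for \eqref{prelimiaries-eq-00} (test against $|v|^{\gamma-2}v$, discard the nonnegative gradient contribution, and apply Gronwall's inequality). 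This last bit of bookkeeping is the only mildly delicate part of the argument.
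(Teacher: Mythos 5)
The paper does not give its own proof of this lemma; it is quoted verbatim from Zheng--Li--Bao--Zou~\cite{JZYYLGBXZ2018}, so there is no in-paper argument to compare against. Your scheme---the exponential substitution $w=e^{s}v$, an appeal to maximal $L^\gamma$--$L^\gamma$ Sobolev regularity for the transformed Neumann problem, and elliptic regularity to convert $\|w(\cdot,s_0)\|_{W^{2,\gamma}}$ back into the stated initial terms---is the standard route to such weighted maximal-regularity estimates and is what the cited reference does; your first three paragraphs are a correct reduction.

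The gap is in the fix you sketch for the issue you correctly flag at the end, namely uniformity of $C_\gamma$ in $T$. Since $C_\gamma$ is fed into hypothesis (H1) as a fixed parameter and the inequality is applied on arbitrarily long time intervals, the constant must be independent of the horizon. After moving $(\lambda-\tau)w$ to the right-hand side and using $T$-uniform maximal regularity for the strictly positive $-\Delta+\lambda$, you still need to absorb $\tau^\gamma\int_{s_0}^{T}e^{\gamma s}\|v\|_{L^\gamma}^{\gamma}ds$. The energy estimate you propose gives $\frac{\tau}{\gamma}\frac{d}{dt}\|v\|_{\gamma}^{\gamma}+(\lambda-\delta)\|v\|_{\gamma}^{\gamma}\le C_\delta\|g\|_{\gamma}^{\gamma}$, hence after Gronwall and multiplication by $e^{\gamma t}$ the initial-data contribution contains $\int_{s_0}^{T}e^{\gamma(1-(\lambda-\delta)/\tau)(t-s_0)}dt$, which is bounded in $T$ only when $\lambda>\tau$; for $\lambda\le\tau$ the absorption fails outright. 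In fact the estimate cannot be unconditionally true: take $g\equiv 0$ and $v(\cdot,s_0)=\psi$ a Neumann eigenfunction of $-\Delta$ with first nonzero eigenvalue $\mu_1>0$. Then $\Delta v(\cdot,s)=-\mu_1 e^{-(\mu_1+\lambda)(s-s_0)/\tau}\psi$, so the left side of \eqref{prelimiaries-eq-01} grows like $\int_0^{T-s_0}e^{\gamma(1-(\mu_1+\lambda)/\tau)r}dr$ while the right side is $T$-independent; a uniform $C_\gamma$ thus requires $\tau<\mu_1+\lambda$. The clean way to reach that sharper and correct threshold is to observe that $\Delta v$ annihilates the spatial mean of $v$ and apply maximal regularity only to the mean-zero part, where $-\Delta+(\lambda-\tau)$ has spectral lower bound $\mu_1+\lambda-\tau$. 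Your version, resting on positivity of $-\Delta+\lambda$ on all of $L^\gamma(\Omega)$, only reaches $\lambda>\tau$ and so leaves a genuine gap for $\lambda\le\tau<\mu_1+\lambda$; it also silently imports a hypothesis ($\tau<\mu_1+\lambda$) that is not stated in the lemma and should be made explicit.
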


The constant $C_\gamma$ such that \eqref{prelimiaries-eq-01} holds is not unique. In the following, we always assume that
$C_\gamma$ is the smallest positive constant such that \eqref{prelimiaries-eq-01} holds.
}

Next, we  introduce some notations and definitions.  Throughout the paper, we  put
\vspace{-0.05in} \begin{equation}
 \label{a-i-sup-inf-eq1}
a_{i,\inf}=\inf _{ t \in\RR,x \in\bar{\Omega}}a_i(t,x),\quad a_{i,\sup}=\sup _{ t \in\RR,x \in\bar{\Omega}}a_i(t,x),
 \vspace{-0.05in}\end{equation}
 \begin{equation}
 \label{a-i-sup-inf-eq2}
a_{i,\inf}(t)=\inf _{x \in\bar{\Omega}}a_i(t,x),\quad a_{i,\sup}(t)=\sup _{x \in\bar{\Omega}}a_i(t,x),
\vspace{-0.05in} \end{equation}
 unless specified otherwise.

For given $t_0\in\RR$, $u_0\in C(\bar\Omega)$,  and $v_0\in W^{1,\infty}(\Omega)$ with $u_0> 0$ and  $v_0\geq 0$, we denote a classical solution
$(u(t,x),v(t,x))$ of \eqref{u-v-eq00}  by $(u(t,x;t_0,u_0,v_0),v(t,x;t_0,u_0,v_0))$ if it is defined on $[t_0,t_0+T)$ for some $T>0$
and satisfies
\begin{equation}
\label{initial-cond-eq}
\lim_{t\to t_0^+} (u(t,\cdot;t_0,u_0,v_0),v(t,\cdot;t_0,u_0,v_0))=(u_0(\cdot),v_0(\cdot))
\end{equation}
{ in $C(\bar\Omega)\times W^{1,q}(\Omega)$ for any $q>n.$ } In such case, $(u(t,x;t_0,u_0,v_0),v(t,x;t_0,u_0,v_0))$ is called
{\it the solution
of \eqref{u-v-eq00} on $[t_0,t_0+T)$ with initial condition} $(u(t_0,x),v(t_0,x))=(u_0(x),v_0(x))$. If $T$ can be chosen to be $\infty$, we say
the solution of \eqref{u-v-eq00} with initial condition $(u(t_0,x),v(t_0,x))=(u_0(x),v_0(x))$ exists globally.
A solution $(u(x,t),v(x,t))$ of \eqref{u-v-eq00} defined for all $t\in\RR$ is called an {\it entire solution}.

\begin{definition}
\label{coex-persist-def}
We say that {\rm pointwise persistence} occurs in \eqref{u-v-eq00} if there is $\eta>0$ such that for any $t_0\in\RR$, $u_0\in C(\bar\Omega),$ and $v_0\in W^{1,\infty}(\Omega)$ with $u_0> 0$ and  $v_0\geq 0$, $(u(t,x;t_0,u_0,v_0)$, $v(t,x;t_0,u_0,v_0))$ exists globally, and
there is {$\tau(u_0,v_0)>0$}  such that
\begin{equation}\label{attracting-set-for-u-w-eq00}
 u(x,t;t_0,u_0,v_0) \ge  \eta,\quad \forall\,\, {x\in \Omega},\,\, t\ge t_0+\tau(u_0,v_0).
\end{equation}
\end{definition}

For convenience, we introduce the following two standing hypotheses.

\medskip

\noindent {\bf (H1)}  $a_{1,\inf}>\inf_{q>\max\{1,\frac{n}{2}\}}\Big(\frac{q-1}{q}(C_{q+1})^{\frac{1}{q+1}} \mu^{\frac{1}{q+1}}\Big)|\chi|$
 and  $ \inf_{t \in \mathbb{R}}\big(a_{1,\inf}(t)-{ |\Omega|(a_{2,\inf}(t))_-}\big)>0$.

\medskip

\noindent {\bf (H2)}  { $\Omega$ is convex, $\tau=1$,  $a_{1,\inf}>\frac{n\mu { |\chi|}}{4}$},  and $ \inf_{t \in \mathbb{R}}\big(a_{1,\inf}(t)-{ |\Omega|(a_{2,\inf}(t))_-}\big)>0.$

\medskip

We now state our main results.  The first theorem is on the global existence and boundedness of nonnegative classical solutions of system \eqref{u-v-eq00}, which is fundamental for the study of pointwise persistence.

\begin{theorem}{ (Global Existence)}
\label{thm-global-000}
  Assume that {\bf (H1)} or {\bf (H2)} holds.  Then for any $t_0 \in \R$  and $(u_0,v_0) \in C(\bar\Omega )\times W^{1,\infty}(\Omega)$ { with $u_0,v_0 \geq 0$}, \eqref{u-v-eq00} has a unique bounded {globally defined} classical solution $(u(\cdot,t;t_0,u_0,v_0)$, $v(\cdot,t;t_0,u_0,v_0))$.  Furthermore,  there are positive numbers $M_1$ and $M_2$ independent of $t_0,u_0,v_0$ and there are $t^2(u_0,v_0)>t^1(u_0,v_0)>0$ such that
  \begin{equation}
  \label{m1-eq}
  \int_\Omega u(x,t;t_0,u_0,v_0)\le M_1\quad \forall\, t\ge t_0+t^1(u_0,v_0)
  \end{equation}
  and
  \begin{equation}
  \label{m2-eq}
  \|u(\cdot,t;t_0u_0,v_0)\|_{\infty}\le M_2\quad \forall\, t\ge t_0+t^2(u_0,v_0).
  \end{equation}
\end{theorem}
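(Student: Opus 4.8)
The plan is to follow the standard route for chemotaxis systems with logistic damping: local existence with an extensibility criterion, an \emph{a priori} $L^1$-bound on $u$ coming from the logistic structure, its promotion to a uniform-in-time $L^p$-bound for some $p>\max\{1,n/2\}$, and finally a bootstrap to an $L^\infty$-bound, which simultaneously excludes finite-time blow-up and supplies the asserted estimates. First I would invoke standard results on quasilinear parabolic systems to obtain, for each admissible $(t_0,u_0,v_0)$, a unique classical solution on a maximal interval $[t_0,T_{\max})$ together with the extensibility criterion: if $T_{\max}<\infty$ then $\limsup_{t\uparrow T_{\max}}\|u(\cdot,t)\|_{L^\infty(\Omega)}=\infty$. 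Nonnegativity of $u$ and $v$, and positivity of $u$ whenever $u_0>0$, follow from the parabolic maximum principle and the structure of \eqref{u-v-eq00}.

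For the $L^1$-bound, integrate the first equation of \eqref{u-v-eq00} over $\Omega$; the diffusive and chemotactic terms vanish by the Neumann conditions, so with $m(t):=\int_\Omega u(\cdot,t)$, using $u\ge0$ and $\int_\Omega u^2\ge|\Omega|^{-1}m^2$,
\[
m'(t)=\int_\Omega a_0u-\int_\Omega a_1u^2-m(t)\int_\Omega a_2u\le a_{0,\sup}(t)\,m(t)-\frac{1}{|\Omega|}\big(a_{1,\inf}(t)-|\Omega|(a_{2,\inf}(t))_-\big)m(t)^2 .
\]
By the standing hypothesis the coefficient of $-m^2$ is bounded below by a positive constant, so comparison with the corresponding logistic ODE yields $\limsup_{t\to\infty}m(t)\le M_1^0$ for an explicit $M_1^0$ depending only on $a_{0,\sup}$, $|\Omega|$ and $\inf_t(a_{1,\inf}(t)-|\Omega|(a_{2,\inf}(t))_-)$; taking $M_1:=M_1^0+1$ gives \eqref{m1-eq} with a threshold time $t^1(u_0,v_0)$ depending on the data only through $m(t_0)=\int_\Omega u_0$.

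The heart of the argument is upgrading \eqref{m1-eq} to a uniform-in-time $L^p$-bound for some $p>\max\{1,n/2\}$, and the two hypotheses are treated by different devices. Under \textbf{(H1)} I would test the $u$-equation with $u^{q-1}$ for $q>\max\{1,n/2\}$: the chemotactic term equals $-\tfrac{q-1}{q}\chi\int_\Omega u^q\Delta v$ after integrating by parts twice, and Young's inequality bounds it by $\varepsilon\int_\Omega u^{q+1}+C_\varepsilon\int_\Omega|\Delta v|^{q+1}$; multiplying the resulting differential inequality by $e^{(q+1)t}$ and applying the maximal Sobolev regularity estimate of Lemma \ref{prelimiaries-lm-00} with $\gamma=q+1$ and $g=\mu u$ controls the time-weighted integral of $\int_\Omega|\Delta v|^{q+1}$ by that of $\int_\Omega u^{q+1}$, so that, after optimizing in $\varepsilon$, the logistic dissipation $-a_{1,\inf}\int_\Omega u^{q+1}$ absorbs the remainder precisely when the effective coefficient $\tfrac{q-1}{q}(C_{q+1})^{1/(q+1)}\mu^{1/(q+1)}|\chi|$ is strictly below $a_{1,\inf}$, i.e. under \textbf{(H1)} after taking the infimum over $q$; the nonlocal term contributes only the lower-order summand $(a_{2,\inf}(t))_-\,m(t)\int_\Omega u^q\le(a_{2,\inf}(t))_-M_1\int_\Omega u^q$ once \eqref{m1-eq} holds, and is harmless. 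Under \textbf{(H2)} ($\tau=1$, $\Omega$ convex) I would instead couple the $\int_\Omega u^p$-estimate to an estimate for a suitable power of $\int_\Omega|\nabla v|^2$, using that convexity forces $\partial_n|\nabla v|^2\le0$ on $\partial\Omega$ together with the identity $\Delta|\nabla v|^2=2|D^2v|^2+2\nabla v\cdot\nabla\Delta v$, and a Gagliardo--Nirenberg interpolation whose constant produces the factor $n/4$; the requirement $a_{1,\inf}>\tfrac{n\mu|\chi|}{4}$ is exactly what makes the combined functional satisfy an absorbing differential inequality. Either way one arrives at $\sup_{t\ge t_0+\hat t(u_0,v_0)}\|u(\cdot,t)\|_{L^p(\Omega)}\le C$ with $C$ independent of the data, and the same differential inequalities show $\|u(\cdot,t)\|_{L^p(\Omega)}$ stays finite on all of $[t_0,T_{\max})$.

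From such an $L^p$-bound with $p>n/2$, standard smoothing estimates for the Neumann heat semigroup applied to the $v$-equation bound $\|\nabla v(\cdot,t)\|_{L^\infty(\Omega)}$, and then a Moser--Alikakos iteration (or a variation-of-constants argument on the $u$-equation, using the logistic damping to dominate the chemotactic drift) shows $\|u(\cdot,t)\|_{L^\infty(\Omega)}$ stays bounded on $[t_0,T_{\max})$ and is eventually bounded by a data-independent $M_2$ after time $t^2(u_0,v_0)$, giving \eqref{m2-eq}. The extensibility criterion then forces $T_{\max}=\infty$, and boundedness of $v$ follows from that of $u$. The main obstacle is the $L^p$-step: tracking the constants — the $q$-dependence of $C_{q+1}$ in the first case, the Gagliardo--Nirenberg constant in the second — carefully enough that \textbf{(H1)} and \textbf{(H2)} turn out to be exactly the thresholds that close the estimate, while at the same time handling the time-dependent coefficients $a_i$ and the nonlocal term, the latter being controllable only after \eqref{m1-eq} is established, which is why the $L^1$-bound must be derived first.
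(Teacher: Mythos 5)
Your \textbf{(H1)} branch tracks the paper's route reasonably well: test the $u$-equation with $u^{q-1}$, integrate by parts twice to expose $\int_\Omega u^q\Delta v$, Young's inequality, maximal Sobolev regularity (Lemma \ref{prelimiaries-lm-00}) with $g=\mu u$, optimize in the Young parameter to produce precisely the threshold $\frac{q-1}{q}(C_{q+1})^{1/(q+1)}\mu^{1/(q+1)}|\chi|$, then bootstrap. One small imprecision: an $L^p$ bound on $u$ with $p>n/2$ is not enough to bound $\|\nabla v\|_{L^\infty}$ directly; the paper bootstraps through Steps 2--4 to get $u\in L^\gamma$ and $\nabla v\in L^q$ for \emph{all} $\gamma,q$, and only then (with $p>n$) passes to $L^\infty$ via semigroup estimates in Step 5.

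The \textbf{(H2)} branch is where your proposal has a genuine gap. The paper does \emph{not} couple $\int_\Omega u^p$ to an integral of $|\nabla v|$, and Gagliardo--Nirenberg plays no role whatsoever. Instead the argument is entirely pointwise: one forms the scalar function $z:=\frac{1}{|\chi|}u+\frac{1}{2\mu}|\nabla v|^2$, divides the $u$-equation by $|\chi|$, computes $\frac{1}{2\mu}\partial_t|\nabla v|^2$ using $\nabla v\cdot\nabla\Delta v=\frac12\Delta|\nabla v|^2-|D^2v|^2$, and adds; because $\tau=1$ the two diffusion operators coincide, the $\pm\nabla u\cdot\nabla v$ terms cancel, and one is left to control $-u\Delta v-\frac1\mu|D^2v|^2$ plus the logistic terms. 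The factor $\frac{n}{4}$ in (H2) comes from the pointwise trace estimate $|\Delta v|^2\le n|D^2v|^2$ combined with Young, giving $|u\Delta v|\le\frac{n\mu}{4}u^2+\frac1\mu|D^2v|^2$; it is \emph{not} a Gagliardo--Nirenberg constant. Convexity yields $\partial_n|\nabla v|^2\le0$ on $\partial\Omega$, so $z$ satisfies a scalar parabolic differential inequality with Neumann boundary conditions; ODE comparison then bounds $z$ pointwise, hence bounds $\|u\|_\infty$ and $\|\nabla v\|_\infty$ simultaneously, with no $L^p$ bootstrap at all. This also explains why (H2) requires $\tau=1$: without matching diffusion rates the pointwise sum does not satisfy a single parabolic inequality. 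If you insist on your integral-coupling scheme you would need to show it reproduces the sharp coefficient $\frac{n\mu}{4}$, and a Gagliardo--Nirenberg constant will not give that.
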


The second theorem is on pointwise persistence.

\begin{theorem}{ (Pointwise persistence)}
\label{thm-persistence-entire solution-000}
Suppose that {\bf (H1)} or {\bf (H2)} holds. Then pointwise persistence occurs in \eqref{u-v-eq00}.
\end{theorem}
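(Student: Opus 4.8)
The plan is to reduce pointwise persistence to two ingredients — a \emph{uniform‑in‑data} positive lower bound for the total mass $\int_\Omega u$, and an eventual \emph{uniform} H\"older estimate for $u$ — and then to spread a local lower bound to the whole domain by a parabolic Harnack argument. I would first fix data $(u_0,v_0)$ with $u_0>0$, $v_0\ge0$ and invoke Theorem~\ref{thm-global-000}: the solution is global, and $\int_\Omega u\le M_1$, $\|u(\cdot,t)\|_\infty\le M_2$ for $t\ge t_0+t^2(u_0,v_0)$, with $M_1,M_2$ independent of the data. Plugging $\|u\|_\infty\le M_2$ into the variation‑of‑constants formula for the $v$‑equation and using the standard smoothing/decay bounds for the Neumann heat semigroup $e^{\frac{t}{\tau}(\Delta-\lambda)}$, I would record companion bounds $\|v(\cdot,t)\|_\infty\le M_3$ and $\|\nabla v(\cdot,t)\|_\infty\le M_4$, valid eventually and uniformly in the data. (Throughout I use that $a_0$ is bounded below by a positive constant, a standing hypothesis of the paper.)

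Next, set $m(t):=\int_\Omega u(x,t)\,dx$. Integrating the first equation over $\Omega$ kills the diffusion and chemotaxis terms, leaving
\[
m'(t)=\int_\Omega a_0u-\int_\Omega a_1u^2-\Big(\int_\Omega a_2u\Big)m(t)\ \ge\ a_{0,\inf}\,m(t)-a_{1,\sup}\int_\Omega u^2-(a_{2,\sup})_+\,m(t)^2 .
\]
The naive estimate $\int_\Omega u^2\le M_2m$ is useless when $a_{0,\inf}-a_{1,\sup}M_2<0$, so the real point is a parabolic smoothing estimate showing that \emph{small mass forces a small sup‑norm}: using the Duhamel formula for $u$ with the eventual bounds $\|u\|_\infty\le M_2$, $\|\nabla v\|_\infty\le M_4$ and the mapping properties $\|e^{\sigma\Delta}\|_{L^1\to L^\infty}\le C\sigma^{-n/2}$, $\|e^{\sigma\Delta}\,\nabla\cdot\|_{L^p\to L^\infty}\le C\sigma^{-\frac{1}{2}-\frac{n}{2p}}$ ($p>n$) of the Neumann heat semigroup, I would produce a continuous nondecreasing $\Psi$ with $\Psi(0^+)=0$ such that $\|u(\cdot,t)\|_\infty\le\Psi\big(\sup_{\sigma\in[t-1,t]}m(\sigma)\big)$ eventually. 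Then $\int_\Omega u^2\le\Psi(\sup_{[t-1,t]}m)\,m$, hence $m'(t)\ge\frac{a_{0,\inf}}{2}\,m(t)$ whenever $\sup_{[t-1,t]}m$ lies below a threshold $\varepsilon_0$ (chosen from $a_{1,\sup}\Psi(\varepsilon_0)+(a_{2,\sup})_+\varepsilon_0\le\frac{a_{0,\inf}}{2}$). Combined with $m\le M_1$ eventually and $m>0$ (positivity of $u$), a short real‑variable argument — ``if $m$ ever falls below $\varepsilon_0$ it grows back exponentially'' — yields $m_*>0$, independent of the data, and $t^4(u_0,v_0)$ with $\int_\Omega u(x,t)\,dx\ge m_*$ for all $t\ge t_0+t^4(u_0,v_0)$. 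Here $a_{0,\inf}>0$ and the hypothesis $\inf_t\big(a_{1,\inf}(t)-|\Omega|(a_{2,\inf}(t))_-\big)>0$ from \textbf{(H1)}/\textbf{(H2)} are precisely what prevent the loss terms from dominating.

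For the regularity step I would, on the range where $\|u\|_\infty\le M_2$ and $\|\nabla v\|_\infty\le M_4$, write the first equation in divergence form $u_t=\nabla\cdot(\nabla u-\chi u\nabla v)+c\,u$ with $c:=a_0-a_1u-a_2\int_\Omega u$ bounded, which is uniformly parabolic with bounded drift $u\nabla v$; the De Giorgi--Nash--Moser estimate (with Neumann data) then gives $\theta\in(0,1)$, $M_5>0$ and $t^5(u_0,v_0)$, with $\theta,M_5$ data‑independent, such that $\|u(\cdot,t)\|_{C^\theta(\bar\Omega)}\le M_5$ eventually. Now fix $t_1$ large. The mass bound yields $x\in\bar\Omega$ with $u(x,t_1-1)\ge m_*/|\Omega|$, and the H\"older bound upgrades this to $u(\cdot,t_1-1)\ge\delta_0:=\frac{m_*}{2|\Omega|}$ on $B(x,r_0)\cap\Omega$ with $r_0:=(\delta_0/M_5)^{1/\theta}$, a set that (by smoothness of $\partial\Omega$) contains a ball $B\subset\bar\Omega$ of a fixed radius $r_1$. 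On $(t_1-1,t_1]$, $u>0$ solves the linear, uniformly parabolic, divergence‑form equation above with $\|u\nabla v\|_\infty$ and $\|c\|_\infty$ bounded uniformly; applying the parabolic Harnack inequality along a finite chain of overlapping balls covering the connected set $\bar\Omega$ (whose number and geometry depend only on $\Omega$ and $r_1$) propagates the lower bound $\delta_0$ on $B$ at time $t_1-1$ to $u(\cdot,t_1)\ge\eta$ on all of $\bar\Omega$, with $\eta>0$ depending only on $\delta_0$, $\Omega$, the ellipticity constants and the uniform bounds on $\|\nabla v\|_\infty$, $\|c\|_\infty$ — hence not on $t_1$ nor on the data. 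Since $t_1$ was arbitrary this is exactly pointwise persistence, with $\tau(u_0,v_0):=\max\{t^4,t^5\}(u_0,v_0)+1$. (Alternatively one may compare $u$ on $(t_1-1,t_1]$ with the solution of the same linear equation with initial datum $\delta_0\mathbf 1_B$ and bound that solution below via the strong maximum principle and the uniform coefficient bounds.)

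The main obstacle is the uniform mass lower bound in the second step: the ``persistence of mass'' of \cite{TaoWin15} must be upgraded to a bound independent of the initial data, and the mass ODE by itself is insufficient since the effective linear rate $a_{0,\inf}-a_{1,\sup}\|u\|_\infty$ may be negative; the resolution is the coupling with the parabolic $L^1\!\to\!L^\infty$ smoothing estimate, which makes the quadratic loss negligible precisely when the mass is small. The space--time heterogeneity of the $a_i$ and the nonlocal term $a_2\int_\Omega u$ add only bookkeeping, absorbed by $a_{0,\inf}>0$ and $\inf_t\big(a_{1,\inf}(t)-|\Omega|(a_{2,\inf}(t))_-\big)>0$. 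The boundary Harnack inequality and the chaining in the last step are routine but must be organized so that all constants are manifestly independent of $t_1$ and of the data.
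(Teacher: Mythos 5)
Your proposal is correct in its overall logic and genuinely different from the paper's proof. The paper's argument tracks $\|u(\cdot,t)\|_\infty$ against a threshold $\epsilon_0<\epsilon^*$, derives uniform $W^{2,\infty}$ bounds on $v$ via fractional powers of a sectorial operator (Lemmas 4.1--4.3), uses the nondivergence form $u_t=\Delta u-\chi\nabla u\cdot\nabla v+u(a_0-\chi\Delta v-\cdots)$ to show that if $\|u\|_\infty$ stays below $\epsilon_0$ then the zeroth-order rate eventually becomes positive and $u$ grows exponentially (Step 2, a contradiction), bounds the recurrence time to the level $\epsilon_0$ (Step 4), and — crucially — converts a one-point lower bound into a uniform spatial lower bound by an Arzel\`a--Ascoli compactness argument, extracting a limiting solution and invoking the strong maximum principle on the limit (Step 3, relying on Lemma~\ref{dependence-on-parameters-lm} and a generalized Gronwall inequality). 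You instead track the mass $m(t)=\int_\Omega u$, obtain a data-independent lower bound $m_*$ from the smoothing estimate ``small mass $\Rightarrow$ small sup-norm'' (so the quadratic loss is negligible below a threshold), and then spread a local lower bound by De Giorgi--Nash--Moser H\"older regularity plus Harnack chaining, keeping the equation in divergence form $u_t=\nabla\cdot(\nabla u-\chi u\nabla v)+cu$ so that only $\|\nabla v\|_\infty$, not $\|\Delta v\|_\infty$, is needed. That last point is a genuine simplification: you avoid the $W^{2,\infty}$ estimate on $v$ (Lemma 4.3) entirely, and your compactness-free argument would in principle give an explicit $\eta$, whereas the paper's Step 3 is non-constructive. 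Both routes lean in the same way on Theorem~\ref{thm-global-000} for eventual data-independent bounds, on the implicit assumption $a_{0,\inf}>0$, and on the structure hypothesis $\inf_t\big(a_{1,\inf}(t)-|\Omega|(a_{2,\inf}(t))_-\big)>0$.

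Two places where you should be careful when writing this out fully. First, in the mass ODE step the differential inequality $m'\ge\frac{a_{0,\inf}}{2}m$ only becomes available one unit of time \emph{after} $m$ drops below $\epsilon_0$ (because $\Psi$ takes $\sup_{[t-1,t]}m$ as input); you need the crude bound $m'\ge -Cm$ with $C$ uniform (from $\int u^2\le M_2\,m$) to control $m$ during that lag before the smoothing kicks in, which then gives $m_*=\epsilon_0e^{-C}$. Second, in the Harnack chaining the zeroth-order coefficient $c=a_0-a_1u-a_2\int_\Omega u$ must be treated as a bounded measurable coefficient (its $L^\infty$ bound is uniform by Theorem~\ref{thm-global-000}), and the Harnack inequality must be the boundary version for Neumann conditions on a smooth domain; the time lag inherent in each Harnack step accumulates over the chain, so it is cleaner to use your alternative: compare on $(t_1-1,t_1]$ with the solution of the same linear equation started from $\delta_0\mathbf 1_B$ and bound that from below by the strong maximum principle with uniform coefficient bounds. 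Neither of these is a gap — just bookkeeping you already flag — and neither changes the conclusion that your route is sound.
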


Applying the above pointwise persistence theorem, we obtain the third theorem on the existence of strictly positive entire solutions
of \eqref{u-v-eq00}.

\begin{theorem} (Strictly positive entire solutions)
\label{thm-entire-solution-00}
Assume  that {\bf (H1)} or {\bf (H2)} holds. System \eqref{u-v-eq00} has a positive entire solution  $(u^*(x,t),v^*(x,t))$
satisfying
\begin{equation}\label{attracting-set-for-u-w-eq01}
\inf_{x\in\bar\Omega,t\in\RR} u^*(x,t)>0.
\end{equation}
\end{theorem}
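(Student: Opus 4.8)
The plan is to realize $(u^*,v^*)$ as a limit of globally defined solutions launched from times tending to $-\infty$, using Theorems~\ref{thm-global-000} and~\ref{thm-persistence-entire solution-000} to supply bounds that are uniform in the launch time together with a uniform pointwise lower bound, and parabolic regularity to supply the compactness needed to pass to the limit; this is the standard construction of entire solutions for nonautonomous parabolic problems. Concretely, fix once and for all $u_0\equiv 1\in C(\bar\Omega)$ and $v_0\equiv 0\in W^{1,\infty}(\Omega)$, and for each $n\in\NN$ put $(u_n,v_n)(x,t):=(u(x,t;-n,u_0,v_0),v(x,t;-n,u_0,v_0))$, a bounded classical solution on $[-n,\infty)$ by Theorem~\ref{thm-global-000}. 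By \eqref{m2-eq} and by Theorem~\ref{thm-persistence-entire solution-000} with Definition~\ref{coex-persist-def}, there is a single constant $T_0:=\max\{t^2(u_0,v_0),\tau(u_0,v_0)\}$, depending only on the fixed pair $(u_0,v_0)$, such that
\begin{equation}
\label{plan-bounds}
\|u_n(\cdot,t)\|_\infty\le M_2\quad\text{and}\quad u_n(x,t)\ge\eta\ \ \forall\,x\in\Omega,\qquad t\ge -n+T_0,\ \ n\in\NN.
\end{equation}
Since $v_n\ge 0$ solves the linear Neumann problem $\tau v_t=\Delta v-\lambda v+\mu u_n$, the $L^\infty$ bound on $u_n$ also gives $\|v_n(\cdot,t)\|_\infty\le M_3$ for $t\ge -n+T_0$ with a further constant $M_3$, e.g. by comparison with the spatially constant solution of the same equation.

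For compactness I would run a parabolic bootstrap that is uniform in $n$. Fix $k\in\NN$; then for every $n>k+T_0+1$ the pair $(u_n,v_n)$ is defined on $[-k-1,k+1]\times\bar\Omega$, where $\|u_n\|_\infty\le M_2$ and $\|v_n\|_\infty\le M_3$. Applying $L^p$ maximal regularity to the $v$-equation (Lemma~\ref{prelimiaries-lm-00}, or standard Neumann parabolic theory) with $g=\mu u_n\in L^p$ for every $p>n+2$ bounds $v_n$ in the parabolic Sobolev space $W^{2,1}_p$ on a slightly smaller time window, hence, by Sobolev embedding, in $C^{1+\beta,(1+\beta)/2}$ for some $\beta\in(0,1)$, with a constant depending only on $k$, $M_2$, $M_3$, $\Omega$ and the fixed parameters $\chi,\tau,\lambda,\mu$ and the numbers $a_{i,\sup}$. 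With $\nabla v_n$ thus H\"older bounded, the chemotactic term $-\chi\nabla\cdot(u_n\nabla v_n)=-\chi(\nabla u_n\cdot\nabla v_n+u_n\Delta v_n)$ and the logistic term are controlled once $u_n$ itself is H\"older, so interior-in-time parabolic Schauder estimates applied to the $u$-equation give a uniform bound for $u_n$ in $C^{2+\beta,1+\beta/2}([-k,k]\times\bar\Omega)$; feeding this back into the $v$-equation upgrades $v_n$ to the same space, with all constants depending only on $k$ and the data.

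By the Arzel\`a--Ascoli theorem, for each $k$ a subsequence of $\{(u_n,v_n)\}$ converges in $C^{2,1}([-k,k]\times\bar\Omega)$, and a diagonal argument over $k\in\NN$ extracts a single subsequence $(u_{n_j},v_{n_j})$ converging in $C^{2,1}_{\mathrm{loc}}(\RR\times\bar\Omega)$ to some $(u^*,v^*)$ with $v^*\ge0$. Since on any compact time interval $-n_j$ eventually lies to its left, one may pass to the limit in \eqref{u-v-eq00}, so $(u^*,v^*)$ is a classical entire solution. For any $(x,t)\in\bar\Omega\times\RR$, as soon as $n_j>-t+T_0$ we have $u_{n_j}(x,t)\ge\eta$ by \eqref{plan-bounds}, and letting $j\to\infty$ gives $u^*(x,t)\ge\eta$; hence \eqref{attracting-set-for-u-w-eq01} holds with $\inf_{\bar\Omega\times\RR}u^*\ge\eta>0$. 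The step I expect to be the real obstacle is the uniform regularity: one must verify that the parabolic estimates for this cross-diffusive system are genuinely independent of the launch time $-n$, which is exactly why it matters that $M_2$ and $T_0$ in \eqref{plan-bounds} do not depend on $n$; moreover the coupling through $\nabla\cdot(u\nabla v)$ forces the bootstrap to be carried out in the order $v\to u\to v$ rather than applied to a single scalar equation.
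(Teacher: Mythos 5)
Your proposal is the same pullback construction the paper uses: launch solutions from initial time $-n$, invoke Theorem~\ref{thm-persistence-entire solution-000} for a uniform-in-$n$ lower bound $\eta$ once $t\ge -n+T_0$, extract a diagonal subsequence, and pass to the limit to get an entire solution bounded below by $\eta$. The only difference is cosmetic: you supply the compactness explicitly through a uniform parabolic ($L^p$-maximal-regularity plus Schauder) bootstrap giving $C^{2,1}_{\rm loc}$ convergence, whereas the paper extracts convergence of $(u_n(\cdot,-m),v_n(\cdot,-m))$ at each fixed time level in $C^0\times W^{1,\infty}$ (relying on the $W^{2,\infty}$ bounds from Corollaries~\ref{cor1}--\ref{cor3}) and then propagates forward via the continuous-dependence Lemma~\ref{dependence-on-parameters-lm}; both routes are standard and yield the same conclusion.
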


We conclude the introduction with the following three remarks.

First, it should be pointed out that the global existence of nonnegative classical solutions has been studied in \cite{Win2010b} and \cite{JZYYLGBXZ2018} for some special cases of system  \eqref{u-v-eq00}. To be more precise,  in \cite[Theorem 0.1]{Win2010b}, Winkler considered system  \eqref{u-v-eq00} in convex domains $\Omega$ of $\mathbb{R}^n,$ with $\tau>0,$ {  $a_0(t,x)\equiv a_0$, $a_1(t,x)\equiv a_1$, $a_2(t,x)=0$, and $\lambda=\mu=1$},  and established the global existence and boundedness of nonnegative classical solutions of system  \eqref{u-v-eq00} provided that $a_1$ is large enough.
In \cite{JZYYLGBXZ2018}, Zheng, Li, Bao and Zou extended Winkler's global existence result to  bounded domains (not necessarily convex) of $\mathbb{R}^n$ for  $\chi>0$ and show that $a_1 >\frac{(n-2)_+}{n}\chi [C_{\frac{n}{2}+1}]^{\frac{1}{\frac{n}{2}+1}}$ implies global existence of nonnegative solutions in system  \eqref{u-v-eq00}.  Theorem \ref{thm-global-000} stated in the above extends the global existence results in both \cite{Win2010b} and \cite{JZYYLGBXZ2018} to the general full chemotaxis model \eqref{u-v-eq00} with local as well as nonlocal time and space dependent logistic source.  Theorem \ref{thm-global-000} under the assumption {\bf (H1)}  can   be proved by properly  modifying arguments of \cite[Theorem 2.2]{JZYYLGBXZ2018}, and   Theorem \ref{thm-global-000} under the assumption {\bf (H2)}  can be proved by properly modifying the arguments in \cite[Lemma 3.1]{Win2014}. For the completeness, we will provide a proof of Theorem \ref{thm-global-000}.

Second, as it is mentioned in the above,
  Tao and Winkler showed in \cite{TaoWin15} that  the population as a whole  always persists for some special case of \eqref{u-v-eq00}.
  Theorem \ref{thm-persistence-entire solution-000} stated in the above
  shows that under the assumption {\bf (H1)} or {\bf (H2)}, every classical solution of \eqref{u-v-eq00} persists pointwise, which
  implies the population persistence as a whole and rules out the extinction phenomenon observed numerically.
  The pointwise persistence result obtained in Theorem \ref{thm-persistence-entire solution-000} is new.
 Theorem \ref{thm-persistence-entire solution-000} is proved by very nontrivial estimates of $\|v(\cdot,t;t_0,u_0,v_0)\|_{W^{2,\infty}(\Omega)}$
 in terms of $\|u(\cdot,t;t_0,u_0,v_0)\|_\infty$ for $t\gg t_0$ and by nontrivial application of the comparison principle for parabolic equations.

 Third, Theorem  \ref{thm-persistence-entire solution-000} implies that under the assumption
 {\bf (H1)} or {\bf (H2)} any globally defined positive solution of \eqref{u-v-eq00}
 is bounded away from zero eventually. To further study the asymptotic behavior of globally defined positive solutions, it is important to study the existence of various special positive solutions such as strictly positive entire solutions.
 In the case that $a_0(t,x)\equiv a_0$, $a_1(t,x)\equiv a_1$, and $a_2(t,x)\equiv a_2>0$, it is clear that  $(u^*(x,t),v^*(x,t))=(\frac{a_0}{a_1+a_2|\Omega|}, \frac{\mu}{\lambda}\frac{a_0}{a_1+a_2|\Omega|})$ is a strictly positive entire solution of
 \eqref{u-v-eq00}.  In \cite{Win2014}, Winkler proved the global stability of  this positive entire solution  when $\tau=\lambda=\mu=1$, $a_2(t,x)\equiv 0$, and $\Omega$ is convex. It should be pointed out that it is  a challenging problem to prove existence and stability of strictly positive entire solutions. We prove Theorem \ref{thm-entire-solution-00} by applying Theorem  \ref{thm-persistence-entire solution-000}
 together with some pullback technique.
  We leave the following as open questions: {1) If the coefficients of \eqref{u-v-eq00} are periodic in $t$ with period $T$,
   does \eqref{u-v-eq00} have  positive  periodic solutions with period $T$? 2)  When  does \eqref{u-v-eq00} have a unique  stable strictly positive entire solution?}

The rest of the paper is organized as follows. In section 2, we recall some important results to be used to prove the main  results in the paper. The global existence results are established  in section 3. In section 4, we prove our main result on pointwise persistence.
 Finally, in section 5, we show
 existence of strictly positive entire solutions of system \eqref{u-v-eq00}.

\medskip

\noindent {\bf Acknowledgment.} The authors would like to thank the referees for valuable comments and suggestions which improved the presentation of this paper considerably.

\section{Preliminary}

In this section, we present some  preliminary lemmas to be used in later sections.

Let $\Omega\subset\RR^n$ be a bounded smooth domain. For given $1\le p<\infty$, it is well known that the operator
$\Delta: D(\Delta)=\{u\in W^{2,p}(\Omega)\,|\, \frac{\p u}{\p n}|_{\p \Omega}=0\}$ generates an analytic semigroup, denoted by
$e^{t\Delta}$, on $L^p(\Omega)$. { We remark that for any given $u_0\in L^p(\Omega)$, $u(t,x;u_0):=\big(e^{t\Delta}u_0\big)(x)$   is the unique solution of
$$
\begin{cases}
u_t =\Delta u,\quad t>0,\,\, x\in\Omega\cr
\frac{\p u}{\p n}=0,\quad t>0,\,\, x\in\p \Omega
\end{cases}
$$
with $\lim_{t\to 0+} u(t,x;u_0)=u_0(x)$ in $L^p(\Omega)$.
}
\begin{lemma}
\label{estimates-lm-00}
\begin{itemize}

\item[(i)] If $1\le q\le p\le \infty$, there is $K_1(q,p)>0$ such that
\begin{equation}
\label{eq-estimate-000}
\|e^{t\Delta} v\|_p\le K_1(q,p) (1+t^{-\frac{n}{2}(\frac{1}{q}-\frac{1}{p})})\|v\|_q\quad \forall\, t>0,\,\, v\in L^q(\Omega).
\end{equation}

\item[(ii)] Fix $\alpha \in(0,1)$. For given $1<p<\infty,$  let $A=-\Delta+\alpha I$  with $D(A)=\{u \in  W^{2,p}(\Omega) \,|\,  \frac{\p u}{\p n}|_{\p\Omega}=0 \}.$ Then $A$ is sectorial in $L^p(\Omega)$ and possesses closed fractional powers $A^k$ for any $k>0,$ and
\begin{equation}
\label{eq-estimate-00}
D(A^k) \hookrightarrow W^{2,\infty} \,\, \text{if} \,\, 2k-\frac{n}{p}>2.
\end{equation}
Moreover, if $(e^{-tA})_{t\geq 0}$ denotes the corresponding analytic semigroup in $L^p(\Omega)$, then  for each $k>0$,   there is $K_2(p,k)>0$ such that
\begin{equation}
\label{eq-estimate-01}
\|A^k e^{-sA}\phi\|_{ L^p(\Omega)}\leq   K_2(p,k)s^{-k}\|\phi\|_{ L^p(\Omega)}
\end{equation}
for all $s>0$ and $\phi \in L^p(\Omega)$.

\item[(iii)]  Given $1<p<\infty$, there is $K_3(p)>0$ such that
\begin{equation}
\label{aux-new-eq1}
\| e^{s\Delta}\nabla \cdot \phi\|_{ L^p(\Omega)}\leq  K_3(p)(1+s^{-\frac{1}{2}})\|\phi\|_{ L^p(\Omega)}
 \end{equation}
 for all $s>0$ and $\phi \in C^1(\bar{\Omega}, \mathbb{R}^n)$ satisfying $\phi \cdot \nu=0$  on $\partial \Omega,$ where $\nu$ is the outward normal vector to $\partial \Omega$. Consequently, for all $s>0$,  the operator $e^{s\Delta}\nabla\cdot$ possesses a uniquely determined
extension to an operator from $L^p(\Omega,\RR^n)$ into $L^p(\Omega,\RR^N)$, with norm controlled according to \eqref{aux-new-eq1}.

\item[(iv)] For given $2\le p<\infty$, there exists a positive constant $K_4(p)$ which only depends on $\Omega$ such that
$$\|\nabla {e^{t\Delta}}v\|_{L^{p}(\Omega)} \leq K_4(p)\|\nabla v\|_{L^{p}(\Omega)}\,\,  \forall\, t>0 \,\, \text{and}\,\, \forall \, v  \in W^{1,p}(\Omega).$$

\item[(v)] For given $1\le q\le p\le\infty$, there is $K_5(q,p)$ such that
$$
\|\nabla { e^{t\Delta}}v\|_{L^p}\le K_5(q,p)\Big(1+t^{-\frac{1}{2}-\frac{n}{2}\big(\frac{1}{q}-\frac{1}{p}\big)}\Big)\|v\|_{L^q}\,\, \forall\, t>0\,\,\text{and}\,\, v\in L^q(\Omega).
$$

\item[(vi)] Given $1<p<\infty,$ there is  $K_6(p)>0$ such that
$$\|\nabla e^{s\Delta}\phi\|_{ L^p(\Omega)}\leq  K_6(p)s^{-\frac{1}{2}}\|\phi\|_{ L^\infty(\Omega)} \quad \forall s>0 \,\, \text{and}\,\, \phi \in L^{\infty}(\Omega).$$
\end{itemize}
\end{lemma}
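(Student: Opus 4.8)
The plan is to derive all six estimates from three classical properties of the Neumann Laplacian on the bounded smooth domain $\Omega$: (a) $\Delta$ with homogeneous Neumann boundary condition generates a positivity-preserving, $L^p$-contractive semigroup $(e^{t\Delta})_{t\ge 0}$ on $L^p(\Omega)$ for every $p\in[1,\infty]$, analytic on $L^p(\Omega)$ when $1<p<\infty$; (b) the associated heat kernel $p_t(x,y)$ obeys, for $0<t\le 1$,
\[
0\le p_t(x,y)\le C_1\,t^{-n/2}e^{-|x-y|^2/(C_2 t)},\qquad |\nabla_x p_t(x,y)|\le C_1\,t^{-(n+1)/2}e^{-|x-y|^2/(C_2 t)},
\]
while $\|p_1\|_{L^\infty(\Omega\times\Omega)}+\|\nabla_x p_1\|_{L^\infty(\Omega\times\Omega)}<\infty$; and (c) the Neumann Laplacian has a spectral gap $\lambda_2>0$, so that $\|e^{t\Delta}g\|_{L^p}\le e^{-\lambda_2 t}\|g\|_{L^p}$ whenever $\int_\Omega g=0$. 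Facts (a) and (c) are standard, and (b) is the classical Gaussian (and gradient-Gaussian) bound for the Neumann heat kernel on a bounded smooth domain. Granting these, the six items follow from short and essentially independent arguments, which I sketch below; it is convenient to establish (v) before (iii).

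For (i) I would write $(e^{t\Delta}v)(x)=\int_\Omega p_t(x,y)v(y)\,dy$ and, for $0<t\le 1$, apply Young's inequality for the integral operator with the first bound in (b): setting $\tfrac1r:=1-(\tfrac1q-\tfrac1p)$, one has $\sup_x\|p_t(x,\cdot)\|_{L^r(\Omega)}\le C\,t^{-\frac n2(1-\frac1r)}=C\,t^{-\frac n2(\frac1q-\frac1p)}$, which gives \eqref{eq-estimate-000} for small $t$; for $t\ge 1$, factor $e^{t\Delta}=e^{\Delta}\circ e^{(t-1)\Delta}$ and combine the boundedness of $e^{\Delta}\colon L^q(\Omega)\to L^p(\Omega)$ (since $p_1$ is bounded and $|\Omega|<\infty$) with the $L^q$-contractivity of $e^{(t-1)\Delta}$, which accounts for the ``$1$'' in \eqref{eq-estimate-000}. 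Item (v) is obtained the same way using the gradient bound in (b), which turns $t^{-\frac n2(\cdot)}$ into $t^{-\frac12-\frac n2(\frac1q-\frac1p)}$ for small $t$, with $\nabla e^{\Delta}\colon L^q\to L^p$ bounded for large $t$. Item (vi) is the endpoint $q=\infty$: for $0<s\le 1$, $|\nabla(e^{s\Delta}\phi)(x)|\le\|\phi\|_\infty\,\|\nabla_x p_s(x,\cdot)\|_{L^1(\Omega)}\le C\,s^{-1/2}\|\phi\|_\infty$, while for $s\ge 1$ one splits $\phi=\bar\phi+(\phi-\bar\phi)$ with $\bar\phi:=|\Omega|^{-1}\int_\Omega\phi$, observes $\nabla(e^{s\Delta}\phi)=\nabla e^{(s-1)\Delta}\bigl(e^{\Delta}(\phi-\bar\phi)\bigr)$ with $e^{\Delta}(\phi-\bar\phi)\in W^{1,p}(\Omega)$ of mean zero, and uses (c) with analyticity to bound the left side by $C e^{-\lambda_2 s}\|\phi\|_\infty\le K_6(p)\,s^{-1/2}\|\phi\|_\infty$; combining the two ranges yields the stated bound. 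For (iii), self-adjointness of $e^{s\Delta}$ on $L^2(\Omega)$ together with $\phi\cdot\nu=0$ on $\partial\Omega$ gives, after one integration by parts, $\int_\Omega(e^{s\Delta}\nabla\!\cdot\phi)\,\psi=-\int_\Omega\phi\cdot\nabla(e^{s\Delta}\psi)$ for smooth $\psi$, so that $\|e^{s\Delta}\nabla\!\cdot\phi\|_{L^p}\le\|\phi\|_{L^p}\sup_{\|\psi\|_{L^{p'}}\le 1}\|\nabla(e^{s\Delta}\psi)\|_{L^{p'}}\le K_3(p)(1+s^{-1/2})\|\phi\|_{L^p}$ by item (v) with $q=p=p'$, and a density argument furnishes the bounded extension to $L^p(\Omega,\mathbb{R}^n)$.

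For (ii), analyticity from (a) makes $A=-\Delta+\alpha I$ sectorial in $L^p(\Omega)$ with spectrum in $\{\operatorname{Re}\lambda\ge\alpha>0\}$; the standard calculus of fractional powers of positive sectorial operators then defines the closed operators $A^k$ and delivers the analytic-semigroup estimate $\|A^k e^{-sA}\|_{L^p\to L^p}\le K_2(p,k)\,s^{-k}$ for all $s>0$, while $D(A^k)\hookrightarrow W^{2,\infty}(\Omega)$ for $2k-\tfrac np>2$ results from identifying $D(A^k)$ with the Bessel potential space $H^{2k,p}(\Omega)$ (with the Neumann condition incorporated once $2k>1+\tfrac1p$) and the Sobolev embedding into $C^2(\bar\Omega)\hookrightarrow W^{2,\infty}(\Omega)$. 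For (iv), the case $p=2$ is the energy identity: with $w=e^{t\Delta}v$, $\tfrac{d}{dt}\tfrac12\|\nabla w\|_{L^2}^2=\int_\Omega\nabla w\cdot\nabla\Delta w=-\|\Delta w\|_{L^2}^2\le 0$, the boundary term vanishing because $\partial w/\partial n=0$, hence $K_4(2)=1$; for $2<p<\infty$ one restricts to the mean-zero subspace (on which $-\Delta$ is invertible), writes $\nabla e^{t\Delta}(v-\bar v)=\bigl(\nabla(-\Delta)^{-1/2}\bigr)\,e^{t\Delta}\,\bigl((-\Delta)^{1/2}(v-\bar v)\bigr)$, and combines the $L^p$-boundedness of the Neumann Riesz transform $\nabla(-\Delta)^{-1/2}$, the estimate $\|(-\Delta)^{1/2}g\|_{L^p}\le C\|\nabla g\|_{L^p}$ for mean-zero $g$, and the $L^p$-contractivity of $e^{t\Delta}$.

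The main difficulty lies not in the bookkeeping above but in the two nonelementary inputs it rests on: the pointwise Gaussian bound for $p_t$ and, especially, for $\nabla_x p_t$ uniformly up to $\partial\Omega$ in (b), and the $L^p$ gradient estimate (iv) for $p>2$ (equivalently, $L^p$-boundedness of the Neumann Riesz transform on $\Omega$). Both are classical for bounded smooth domains, and I would cite them from the heat-kernel and elliptic-regularity literature rather than reprove them; once they are available, the assembly of (i)--(vi) is routine.
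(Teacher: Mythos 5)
Your proposal is correct in substance, but it takes a genuinely different route from the paper. The paper treats this lemma as a collection of known facts: items (ii), (iii) and (vi) are quoted verbatim from equations (4.7)--(4.8), (4.12) and (4.2) of Winkler's 2014 paper \cite{Win2014}, items (iv) and (v) from \cite[Lemma 1.3]{Win2010}, and only item (i) receives an actual argument, namely the reduction of the general case to the mean-zero case of \cite[Lemma 1.3(i)]{Win2010} via the splitting $v=(v-\int_\Omega v)+\int_\Omega v$ and the triangle inequality. You instead rebuild everything from three primitive inputs (Gaussian and gradient-Gaussian bounds for the Neumann heat kernel, the spectral gap on the mean-zero subspace, and $L^p$ Riesz-transform bounds), deducing (i) and (v) by generalized Young's inequality, (vi) by an endpoint kernel estimate plus spectral-gap decay, (iii) by duality from (v) using the symmetry of the kernel and $\phi\cdot\nu=0$, (ii) from the abstract sectorial calculus and Henry's embedding, and (iv) from an energy identity at $p=2$ and Riesz transforms for $p>2$. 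Your approach buys self-containedness and makes visible exactly which hard analytic facts are being used (in particular it exposes that (iv) for $p>2$ is equivalent to Neumann Riesz-transform boundedness, which the paper's citation hides); the paper's approach buys brevity and avoids re-deriving standard semigroup smoothing estimates. Two small points worth making explicit if you write this up: in the duality step for (iii) you should note that the kernel symmetry (self-adjointness on $L^2$) is what lets the pairing identity extend from $L^2$ to the $L^p$--$L^{p'}$ duality, and in (iv) you should record that $(-\Delta)^{\pm 1/2}$ commutes with $e^{t\Delta}$ on the mean-zero subspace, which is what justifies the factorization $\nabla e^{t\Delta}(v-\bar v)=\bigl(\nabla(-\Delta)^{-1/2}\bigr)e^{t\Delta}\bigl((-\Delta)^{1/2}(v-\bar v)\bigr)$. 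Neither is a gap, just a detail to state.
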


\begin{proof}
(i) First, by \cite[Lemma 1.3(i)]{Win2010}, there is $K_{1,0}(q,p)>0$ such that
\eqref{eq-estimate-000} holds for all $v\in L^q(\Omega)$ with $\int_\Omega v=0$ and with $K_1(q,p)=K_{1,0}(q,p)$.
Now for any $v\in L^q(\Omega)$,
we have
\begin{align*}
\|e^{t\Delta }v\|_p&=\| e^{t\Delta} (v-\int_\Omega v)+ e^{ t\Delta}\int_\Omega v\|_p\\
&\le \| e^{t\Delta} (v-\int_\Omega v)\|_p+\|e^{t\Delta }\int_\Omega v\|_p\\
&\le K_{1,0}(q,p) \|v-\int_\Omega v\|_q +\|\int_\Omega v\|_p\\
&\le K_{1,0}(q,p) (1+|\Omega|)\|v\|_q +|\Omega|^{\frac{1}{p}+\frac{1}{q{'}}} \|v\|_q,
\end{align*}
where $q{'}\ge 1$ is such that $\frac{1}{q}+\frac{1}{q{'}}=1$.
This implies that (i) holds for any $v\in L^q(\Omega)$ with $K_1(q,p)=K_{1,0}(q,p)(1+|\Omega|)+|\Omega|^{\frac{1}{p}+\frac{1}{q{'}}}$.

(ii) \eqref{eq-estimate-00} is equation (4.7) and \eqref{eq-estimate-01} is equation (4.8) in \cite{Win2014} respectively.

(iii) This is equation (4.12) in \cite{Win2014}.

(iv) It follows from \cite[Lemma 1.3(iii)]{Win2010}.

(v) It follows from \cite[Lemma 1.3(ii)]{Win2010}.

(vi) This is equation (4.2) in \cite{Win2014}.
\end{proof}

\smallskip
\begin{lemma}\cite[Lemma 1.1]{Win2010b}
\label{local-existence}
For any initial $(u_0,v_0) \in C(\bar{\Omega})\times W^{1,\infty}(\Omega)$  { with $u_0,v_0\geq 0$}, there exists $T_{\max}:=T_{\max}(t_0,u_0,v_0) \in (0,\infty]$ and a {unique}  classical solution $(u(x,t;t_0,u_0,v_0)$, $v(x,t;t_0,u_0,v_0))$ of \eqref{u-v-eq00} with initial condition
 $u(t_0,x)=u_0(x)$ and $v(t_0,x)=v_0(x)$ in the sense of  \eqref{initial-cond-eq}  in $\Omega \times (t_0,t_0+T_{\max})$ satisfying
\begin{equation}\label{local-existence-eq00}
\text{either} \quad T_{\max}=\infty \quad \text{or} \quad  \limsup_{t \to T_{\max}} \|u(\cdot,t_0+t;t_0,u_0,v_0)\|_{\infty}=\infty,
\end{equation}
{and for any $q>n$},
\[u \in  C^0(\bar{\Omega} \times [t_0,t_0+T_{\max})) \cap C^{2,1}(\bar{\Omega} \times (t_0,t_0+T_{\max})) \quad \text{and}  \]
\[v \in  C^0(\bar{\Omega} \times [t_0,t_0+T_{\max})) \cap C^{2,1}(\bar{\Omega} \times (t_0,t_0+T_{\max}))\cap { L^\infty_{\rm loc}}((t_0,t_0+T_{\max});W^{1,q}(\Omega)) .  \]
\end{lemma}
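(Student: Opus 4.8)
The plan is to realize the solution on a short time interval as a fixed point of the Duhamel map, then to bootstrap regularity and invoke the maximum principle. Fix $q>\max\{2,n\}$, so that $W^{1,q}(\Omega)\hookrightarrow C(\bar\Omega)$ and the gradient estimate Lemma~\ref{estimates-lm-00}(iv) applies, and fix $t_0\in\RR$. Writing the second equation of \eqref{u-v-eq00} as $v_t=\frac1\tau(\Delta-\lambda)v+\frac\mu\tau u$ and using the variation-of-constants formula with the Neumann heat semigroup $e^{t\Delta}$ on $L^p(\Omega)$, a classical solution on $[t_0,t_0+T)$ must satisfy
\begin{equation}
\label{mild-u-eq}
u(t)=e^{(t-t_0)\Delta}u_0+\int_{t_0}^t e^{(t-s)\Delta}\Big(-\chi\nabla\cdot\big(u\nabla v\big)+u\big(a_0-a_1u-a_2\int_\Omega u\big)\Big)(s)\,ds,
\end{equation}
\begin{equation}
\label{mild-v-eq}
v(t)=e^{-\frac{\lambda}{\tau}(t-t_0)}e^{\frac{t-t_0}{\tau}\Delta}v_0+\frac\mu\tau\int_{t_0}^t e^{-\frac{\lambda}{\tau}(t-s)}e^{\frac{t-s}{\tau}\Delta}u(s)\,ds,
\end{equation}
where $e^{\sigma\Delta}\nabla\cdot$ is understood as the bounded extension from Lemma~\ref{estimates-lm-00}(iii). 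Denoting the right-hand sides by $\Phi_1(u,v)$ and $\Phi_2(u,v)$, I would apply the Banach fixed point theorem in
$$
X_T:=C\big([t_0,t_0+T];C(\bar\Omega)\big)\times C\big([t_0,t_0+T];W^{1,q}(\Omega)\big)
$$
on the closed ball $\mathcal B_R$ of radius $R:=2\big(\|u_0\|_\infty+\|v_0\|_{W^{1,\infty}(\Omega)}+1\big)$ around the constant-in-time pair $(u_0,v_0)$, for $T>0$ small.

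The required self-map and contraction bounds come from Lemma~\ref{estimates-lm-00}. For the chemotaxis contribution to $\Phi_1$, split $e^{(t-s)\Delta}\nabla\cdot=e^{\frac{t-s}{2}\Delta}\big(e^{\frac{t-s}{2}\Delta}\nabla\cdot\big)$ and combine part (iii) (so $e^{\sigma\Delta}\nabla\cdot$ maps $L^q(\Omega,\RR^n)$ to $L^q(\Omega)$ with norm $\le K_3(q)(1+\sigma^{-1/2})$) with part (i) (so $e^{\sigma\Delta}$ maps $L^q$ to $L^\infty$ with norm $\le K_1(q,\infty)(1+\sigma^{-\frac{n}{2q}})$); the resulting time singularity $\sigma^{-\frac12-\frac{n}{2q}}$ has exponent exceeding $-1$ precisely because $q>n$, so the $s$-integral converges and is $O(T^{1/2-n/(2q)})$, while $\|u(s)\nabla v(s)\|_{L^q}\le\|u(s)\|_\infty\|\nabla v(s)\|_{L^q}\le R^2$ on $\mathcal B_R$. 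The logistic contribution is lower order: the reaction term has sup-norm $\le C(1+R^2)$, using that $a_2\int_\Omega u$ is a bounded linear functional on $C(\bar\Omega)$, so its integral is $O(T)$. Together with strong continuity $e^{(t-t_0)\Delta}u_0\to u_0$ in $C(\bar\Omega)$, this gives $\|\Phi_1(u,v)(t)-u_0\|_\infty\le R/2$ for $t\in[t_0,t_0+T]$ once $T$ is small. For $\Phi_2$, strong continuity gives $e^{-\frac{\lambda}{\tau}(t-t_0)}e^{\frac{t-t_0}{\tau}\Delta}v_0\to v_0$ in $W^{1,q}(\Omega)$, part (vi) bounds $\|\nabla e^{\sigma\Delta}u(s)\|_{L^q}\le K_6(q)\sigma^{-1/2}\|u(s)\|_\infty$ with integrable singularity, and $L^q$-contractivity controls the $L^q$ part, so $\|\Phi_2(u,v)(t)-v_0\|_{W^{1,q}(\Omega)}\le R/2$ for $T$ small; hence $\Phi=(\Phi_1,\Phi_2)$ maps $\mathcal B_R$ into itself. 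For the contraction, write $u\nabla v-\tilde u\nabla\tilde v=(u-\tilde u)\nabla v+\tilde u\nabla(v-\tilde v)$ and note that the reaction term is Lipschitz in $u$ on $\mathcal B_R$; every resulting term carries a positive power of $T$, so for $T=T_0(\|u_0\|_\infty,\|v_0\|_{W^{1,\infty}(\Omega)})$ small $\Phi$ is a contraction on $\mathcal B_R$ and has a unique fixed point there. Uniqueness among all mild solutions on a common subinterval (not merely in $\mathcal B_R$) follows by a Gronwall estimate on the difference of two solutions together with a connectedness argument.

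Next I would upgrade the mild solution to a classical one and continue it. Since $u\in C([t_0,t_0+T];C(\bar\Omega))$, the forcing $\mu u$ in the $v$-equation lies in $L^p(\Omega)$ for all $p<\infty$, so parabolic $L^p$ maximal regularity yields $v\in W^{1,p}_{\rm loc}\cap L^p_{\rm loc}(\,\cdot\,;W^{2,p})$ on $(t_0,t_0+T)$, hence $v\in C^{1+\theta,(1+\theta)/2}_{\rm loc}$ by Sobolev embedding for some $\theta\in(0,1)$; then the first equation of \eqref{u-v-eq00} has H\"older-continuous coefficients and right-hand side, so interior parabolic Schauder theory gives $u\in C^{2+\theta,1+\theta/2}_{\rm loc}$, which fed back gives $v\in C^{2+\theta',1+\theta'/2}_{\rm loc}$; thus $u,v\in C^{2,1}(\bar\Omega\times(t_0,t_0+T))$, and the continuity up to $t=t_0$ together with $v\in L^\infty_{\rm loc}((t_0,t_0+T);W^{1,q}(\Omega))$ is built into $X_T$ and \eqref{initial-cond-eq}. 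A step-by-step continuation, re-solving from later times with the same scheme, produces a maximal existence time $T_{\max}=T_{\max}(t_0,u_0,v_0)\in(0,\infty]$. For nonnegativity, $u_0\ge0$ and the maximum principle applied to the first equation of \eqref{u-v-eq00} (in non-divergence form, with bounded coefficients once $v$ is smooth) give $u\ge0$, whence $\mu u\ge0$ and the positivity-preserving property of $e^{-\frac{\lambda}{\tau}\sigma}e^{\frac{\sigma}{\tau}\Delta}$ in \eqref{mild-v-eq} gives $v\ge0$. Finally, for the blow-up alternative \eqref{local-existence-eq00}: if $T_{\max}<\infty$ and $\limsup_{t\to T_{\max}}\|u(\cdot,t_0+t;t_0,u_0,v_0)\|_\infty<\infty$, then the $v$-estimates above bound $\|v(\cdot,t)\|_{W^{1,q}(\Omega)}$ uniformly on $[t_0,t_0+T_{\max})$, so the solution stays in a fixed bounded subset of $C(\bar\Omega)\times W^{1,q}(\Omega)$; since the local existence time $T_0$ depends only on such bounds, the solution extends past $T_{\max}$, a contradiction.

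I expect the main obstacle to be the cross-diffusion term $\chi\nabla\cdot(u\nabla v)$: the functional setting (the choice $q>n$ and the pair $X_T$) must be arranged so that $\nabla v$ is controlled in $L^q$ and $e^{\sigma\Delta}\nabla\cdot$ acts on that space with a time singularity of order $\sigma^{-\frac12-\frac{n}{2q}}$, whose exponent exceeds $-1$ — hence is integrable near $\sigma=0$ — exactly when $q>n$, while simultaneously keeping the quadratic, nonlocal logistic term $u\big(a_0-a_1u-a_2\int_\Omega u\big)$ consistent within the same estimates; once this balance is struck, the contraction argument and the regularity bootstrap are routine.
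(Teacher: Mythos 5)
Your proposal is correct and follows essentially the same route as the paper, which simply defers to the contraction-mapping argument of \cite[Lemma 1.1]{Win2010b}: a Banach fixed point iteration for the Duhamel formulas in $C([t_0,t_0+T];C(\bar\Omega))\times C([t_0,t_0+T];W^{1,q}(\Omega))$ with $q>n$, followed by a parabolic regularity bootstrap, the maximum principle for nonnegativity, and the standard extensibility criterion. The key balance you identify --- the singularity $\sigma^{-\frac12-\frac{n}{2q}}$ from composing Lemma \ref{estimates-lm-00}(i) and (iii) being integrable precisely when $q>n$ --- is exactly the mechanism in the cited argument, so no further comparison is needed.
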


\begin{proof} It can be proved by the similar arguments as those in \cite[Lemma 1.1]{Win2010b}.
\end{proof}

{
\begin{lemma}
\label{dependence-on-parameters-lm}
For any $t_0 \in \mathbb{R},$ $(u_0,v_0), (\tilde u_0,\tilde v_0) \in C(\bar{\Omega})\times W^{1,\infty}(\Omega)$ with $u_0,v_0,\tilde u_0,\tilde v_0 \geq 0$,  if $(u(t),v(t)):=(u(\cdot,t;t_0,u_0,v_0),v(\cdot,t;t_0,u_0,v_0))$, $(\tilde u(t),\tilde v(t)):=(\tilde u(\cdot,t;t_0,\tilde u_0,\tilde v_0),\tilde v(\cdot,t;t_0,\tilde u_0,\tilde v_0))$ are solution of \eqref{u-v-eq00} with $(u(\cdot,t_0;t_0,u_0,v_0),v(\cdot,t_0;t_0,u_0,v_0))=(u_0,v_0)$ and $(\tilde u(\cdot,t;t_0,\tilde u_0,\tilde v_0)$, $\tilde v(\cdot,t;t_0,\tilde u_0,\tilde v_0))=(\tilde u_0,\tilde v_0)$, then there are $C_0>0$ (independent of $t,t_0,u_0,v_0$) and $C_1(t)=C_1(\sup_{t_0\le s\le t}\|u(s)\|_\infty$, $\sup_{t_0\le s\le t}\|\tilde u(s)\|_\infty$, $\sup_{t_0\le s\le t}\|v(s)\|_{W^{1,\infty}}$, $\sup_{t_0\le s\le t}\|\tilde v(s)\|_{W^{1,\infty}})>0$ such that
\begin{align}
\label{proof_dependence-lm-eq0}
&\|(u-\tilde u)(t)\|_\infty+\|(v-\tilde v)(t)\|_{W^{1,\infty}}\nonumber\\
&\le C_0   e^{-\nu_1 (t-t_0)}\big(\|u_0-\tilde u_0\|_\infty+\|v_0-\tilde v_0\|_{W^{1,\infty}}\big)\nonumber\\
&+C_1(t)\int_{t_0}^te^{-\nu_1(t-s)}(1+(\frac{t-s}{2})^{-\frac{n}{2q}})(1+\nu_2(t-s)^{-\frac{1}{2}}) \|(u-\tilde u)(s)\|_\infty ds\nonumber\\
&+C_1(t)\int_{t_0}^te^{-\nu_1(t-s)}(1+(\frac{t-s}{2})^{-\frac{n}{2q}})(1+\nu_2(t-s)^{-\frac{1}{2}}) \|(v-\tilde v)(s)\|_{W^{1,\infty}}ds
\end{align}
for any $t\in [t_0,t_0+T_{\max})$ { and $q>n$}, where $T_{\max}=\min\{T_{\max}(t_0,u_0,v_0),T_{\max}(t_0,\tilde u_0,\tilde v_0)\}$,
$\nu_1=\min\{1,\frac{\lambda}{\tau}\}$, and  $\nu_2=\max\{1,\sqrt {\frac{\tau}{2}}\}$.
\end{lemma}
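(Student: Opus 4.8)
The plan is to write both solution pairs via the variation-of-constants formula, subtract, and estimate each resulting term with the semigroup bounds of Lemma~\ref{estimates-lm-00}. To make the linear parts decay---and thereby to produce the factor $e^{-\nu_1(t-t_0)}$---I would first recast the system with shifted generators: adding and subtracting $u$ in the first equation and writing the second as $v_t=\tfrac1\tau(\Delta-\lambda)v+\tfrac\mu\tau u$, the associated linear semigroups are $e^{-r}e^{r\Delta}$ and $e^{-\frac\lambda\tau r}e^{\frac r\tau\Delta}$, with decay rates $1$ and $\lambda/\tau$ and hence common rate $\nu_1=\min\{1,\lambda/\tau\}$. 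Putting $w:=u-\tilde u$ and $z:=v-\tilde v$, subtracting the Duhamel representations gives
\begin{align*}
w(t)&=e^{-(t-t_0)}e^{(t-t_0)\Delta}(u_0-\tilde u_0)
-\chi\int_{t_0}^t e^{-(t-s)}e^{(t-s)\Delta}\nabla\cdot\big(w(s)\nabla v(s)+\tilde u(s)\nabla z(s)\big)\,ds\\
&\quad+\int_{t_0}^t e^{-(t-s)}e^{(t-s)\Delta}R(s)\,ds,
\end{align*}
where $R(s)=(1+a_0)w-a_1(u+\tilde u)w-a_2\big(u\int_\Omega w+(\int_\Omega\tilde u)w\big)$ is the difference of the shifted reaction terms, together with
\[
z(t)=e^{-\frac\lambda\tau(t-t_0)}e^{\frac{t-t_0}\tau\Delta}(v_0-\tilde v_0)+\frac\mu\tau\int_{t_0}^t e^{-\frac\lambda\tau(t-s)}e^{\frac{t-s}\tau\Delta}w(s)\,ds.
\]

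Next I would bound $\|w(t)\|_\infty$. The initial term is $\le e^{-\nu_1(t-t_0)}\|u_0-\tilde u_0\|_\infty$ by $L^\infty$-contractivity of the Neumann heat semigroup. For the cross-diffusion integral I would split $e^{(t-s)\Delta}\nabla\cdot=e^{\frac{t-s}{2}\Delta}\bigl(e^{\frac{t-s}{2}\Delta}\nabla\cdot\bigr)$ and apply Lemma~\ref{estimates-lm-00}(iii)---valid since $w\nabla v$ and $\tilde u\nabla z$ still have vanishing normal component on $\partial\Omega$---followed by the $L^q$--$L^\infty$ bound of Lemma~\ref{estimates-lm-00}(i), together with $\|\phi\|_{L^q(\Omega)}\le|\Omega|^{1/q}\|\phi\|_\infty$; since $\|w(s)\nabla v(s)+\tilde u(s)\nabla z(s)\|_\infty\le\|w(s)\|_\infty\|v(s)\|_{W^{1,\infty}}+\|\tilde u(s)\|_\infty\|z(s)\|_{W^{1,\infty}}$, this yields precisely the two integral terms in \eqref{proof_dependence-lm-eq0}, with $\sup_{[t_0,t]}\|v\|_{W^{1,\infty}}$ and $\sup_{[t_0,t]}\|\tilde u\|_\infty$ absorbed into $C_1(t)$. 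For the reaction integral, the algebraic identities $u^2-\tilde u^2=(u+\tilde u)w$ and $u\int_\Omega u-\tilde u\int_\Omega\tilde u=u\int_\Omega w+(\int_\Omega\tilde u)w$ give $\|R(s)\|_\infty\le C\bigl(\sup_{[t_0,t]}\|u\|_\infty,\sup_{[t_0,t]}\|\tilde u\|_\infty\bigr)\|w(s)\|_\infty$, which with $\|e^{r\Delta}\cdot\|_\infty\le\|\cdot\|_\infty$ contributes one more integral, of the same form but with the trivially dominated kernel $e^{-\nu_1(t-s)}$.

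For $\|z(t)\|_{W^{1,\infty}}$ I would treat the value and the gradient separately. The value part is bounded exactly as the $w$-estimates above. For the gradient, the initial term is $\le e^{-\nu_1(t-t_0)}\|v_0-\tilde v_0\|_{W^{1,\infty}}$ by the gradient bound of Lemma~\ref{estimates-lm-00}(iv), and the Duhamel integral is handled by splitting $\nabla e^{\frac{t-s}{\tau}\Delta}=e^{\frac{t-s}{2\tau}\Delta}\,\nabla e^{\frac{t-s}{2\tau}\Delta}$ and using Lemma~\ref{estimates-lm-00}(v) and (i), the $\tau$-rescaling of $\tfrac{t-s}{2\tau}$ back to $t-s$ producing the constant $\nu_2$. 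Adding the bounds for $\|w(t)\|_\infty$, $\|z(t)\|_\infty$ and $\|\nabla z(t)\|_\infty$, and observing that every kernel that occurs is dominated by $e^{-\nu_1(t-s)}\bigl(1+(\tfrac{t-s}{2})^{-\frac{n}{2q}}\bigr)\bigl(1+\nu_2(t-s)^{-\frac12}\bigr)$, yields \eqref{proof_dependence-lm-eq0}; the constants $C_0,\nu_1,\nu_2$ simply record the exponential rates and dyadic-rescaling factors of the two semigroups, while $C_1(t)$ records the a priori sup-norm bounds of $u,\tilde u,v,\tilde v$ on $[t_0,t]$.

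The main obstacle is the cross-diffusion term. The operator $e^{s\Delta}\nabla\cdot$ is only known to map $L^q$ into $L^q$, so extracting an $L^\infty$ bound forces an additional $L^q\!\to\!L^\infty$ smoothing step; this is what produces the exponent $\tfrac{n}{2q}$ and why one needs $q>n$: only then is the product kernel locally integrable at $s=t$, because $\tfrac{n}{2q}+\tfrac12<1$ there---so that \eqref{proof_dependence-lm-eq0} is a genuine integral inequality, needing no Gronwall iteration merely to be stated. One must also split the bilinear term as $u\nabla v-\tilde u\nabla\tilde v=w\,\nabla v+\tilde u\,\nabla z$ (rather than, say, $w\,\nabla\tilde v+u\,\nabla z$) so that each surviving non-difference factor, $\nabla v$ and $\tilde u$, is a quantity already controlled by the bounds built into $C_1(t)$; tracking the $\tau$-scaling in the $v$-semigroup and expanding the reaction difference are then routine bookkeeping.
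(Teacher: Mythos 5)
Your proposal follows the paper's argument essentially verbatim: subtract the Duhamel representations with the shifted generators $\Delta-I$ and $\tfrac1\tau(\Delta-\lambda I)$, split the cross-diffusion difference as $w\nabla v+\tilde u\nabla z$, split the heat semigroup in half to interpose Lemma~\ref{estimates-lm-00}(iii) between the $L^q$--$L^\infty$ smoothing of part~(i), expand the reaction difference algebraically, and treat $\|z\|_\infty$ and $\|\nabla z\|_\infty$ with parts~(i), (iv), (v). The only deviations are cosmetic bookkeeping (your expansion $u\int w+(\int\tilde u)w$ of the nonlocal term versus the paper's $(\int u)w+\tilde u\int w$, and your extra dyadic splitting of $\nabla e^{\frac{t-s}{\tau}\Delta}$ where the paper applies part~(v) at a single scale), so this is the same proof.
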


\begin{proof}
Let  $\phi(t)=(u-\tilde u)(t)$ and $\varphi (t)=(v-\tilde v)(t).$ Fix $t$ such that $t_0\leq t<t_0+T_{\max}$. Then we have
\begin{align}
\label{proof_dependence-lm-eq000}
\phi(t)=&e^{-(t-t_0)}e^{{(t-t_0)}\Delta}\phi(t_0)-\chi\int_{t_0}^te^{-(t-s)}e^{{ \frac{(t-s)}{2}}\Delta}e^{{ \frac{(t-s)}{2}}\Delta}\nabla\cdot (\phi(s) \nabla v(s) )ds \nonumber\\
&-\chi\int_{t_0}^te^{-(t-s)}e^{{ \frac{(t-s)}{2}}\Delta }e^{{ \frac{(t-s)}{2}}\Delta}\nabla\cdot (\tilde u(s)  \nabla \varphi(s))ds \nonumber\\
&+\int_{t_0}^te^{-(t-s)}e^{{(t-s)}\Delta}\big[1+a_0(s)-a_1(s)(u(s)+\tilde u(s))-a_2(s)\int_\Omega u(s)\big ]\phi(s)ds
\nonumber\\
&- \int_{t_0}^te^{-(t-s)}e^{{(t-s)}\Delta} a_2(s)\tilde u(s)\int_\Omega\phi(s)ds,
\end{align}
and
\begin{equation}
\label{proof_dependence-lm-eq001}
\varphi(t)=e^{-\frac{\lambda(t-t_0)}{\tau}}e^{{ (\frac{t-t_0}{\tau})}\Delta}\varphi(t_0)+\frac{\mu}{\tau}\int_{t_0}^te^{-\frac{\lambda(t-s)}{\tau}}e^{{ (\frac{t-s}{\tau})}\Delta}\phi(s)ds.
\end{equation}
Fix some $q>n$. Thus by Lemma \ref{estimates-lm-00} (i), (iii),(iv) and (v), we get  from equations \eqref{proof_dependence-lm-eq000} and \eqref{proof_dependence-lm-eq001} respectively that
\begin{align}
\label{proof_dependence-lm-eq002}
\|\phi(t)\|_\infty&\le  e^{-(t-t_0)}\|\phi(t_0)\|_\infty\nonumber\\
& \,\, +D_0(t,t_0)\int_{t_0}^te^{-(t-s)}(1+(\frac{t-s}{2})^{-\frac{n}{2q}})(1+(\frac{t-s}{2})^{-\frac{1}{2}})\|\phi(s)\|_\infty ds \nonumber\\
&\,\, +D_1(t,t_0)\int_{t_0}^te^{-(t-s)}(1+(\frac{t-s}{2})^{-\frac{n}{2q}})(1+(\frac{t-s}{2})^{-\frac{1}{2}})\|\varphi(s)\|_{W^{1,\infty}} ds \nonumber\\
&\,\, +D_2(t,t_0)\int_{t_0}^te^{-(t-s)}\|\phi(s)\|_\infty ds,
\end{align}
and
\begin{align}
\label{proof_dependence-lm-eq003}
&\|\varphi(t)\|_{W^{1,\infty}} \nonumber\\
&\leq e^{-\frac{\lambda(t-t_0)}{\tau}}\|\varphi(t_0)\|_\infty+K_4({q}){|\Omega|^{\frac{1}{q}}}e^{-\frac{\lambda(t-t_0)}{\tau}}\|\nabla \varphi(t_0)\|_\infty\nonumber\\
& \,\, +(K_1(\infty,\infty)+K_5(\infty,\infty))\frac{\mu}{\tau}\int_{t_0}^te^{-\frac{\lambda(t-s)}{\tau}}(1+(\frac{t-s}{\tau})^{-\frac{1}{2}})\|\phi(s)\|_\infty ds,
\end{align}
where
$$
D_0(t,t_0)=|\chi|K_1(q,\infty)K_3(q)|\Omega|^{\frac{1}{q}}\sup_{t_0\leq s \leq t}\|v(s)\|_{W^{1,\infty}},
$$
$$
D_1(t,t_0)=|\chi|K_1(q,\infty)K_3(q)|\Omega|^{\frac{1}{q}}\sup_{t_0\leq s \leq t}\|\tilde u(s)\|_\infty,
$$
and
$$
D_2(t,t_0)=1+a_{0,\sup}+(a_{1,\sup}+|\Omega|a_{2,\sup})(\sup_{t_0\leq s \leq t}\| u(s)\|_\infty+\sup_{t_0\leq s \leq t}\|\tilde u(s)\|_\infty).
$$
The lemma then follows.
\end{proof}

}
\section{Global existence of bounded classical solutions}
\label{S:Global}

{ In this section, we study the global existence of classical solutions and prove Theorem \ref{thm-global-000}. We first prove a lemma.}

\begin{lemma}
\label{L1-bound}
Suppose $\inf_{t \in \mathbb{R}}\big(a_{1,\inf}(t)-{ |\Omega|(a_{2,\inf}(t))_-}\big)>0$. { Then for any $t_0 \in \mathbb{R},$ $(u_0,v_0) \in C(\bar{\Omega})\times W^{1,\infty}(\Omega)$ with $u_0,v_0 \geq 0$,  if $(u(\cdot,t;t_0,u_0,v_0),v(\cdot,t;t_0,u_0,v_0))$ is the solution of \eqref{u-v-eq00} with $(u(\cdot,t_0;t_0,u_0,v_0),v(\cdot,t_0;t_0,u_0,v_0))=(u_0,v_0)$, we have
\begin{equation}
\label{L1-bound-eq000}
0\leq \int_\Omega u(\cdot,t;t_0,u_0,v_0) \leq \max\{ \int_\Omega u_0, \tilde M_1\}:=M_0(\|u_0\|_\infty)\,\,\,\, \forall\, t_0\leq t<t_0+T_{\max}(t_0,u_0,v_0),
\end{equation}
where
\begin{equation}
\label{m1-def-eq}
\tilde M_1=\frac{|\Omega|a_{0,\sup}}{\inf_{t \in \mathbb{R}}\big(a_{1,\inf}(t)-|\Omega|(a_{2,\inf}(t))_-\big)}.
\end{equation}
Moreover if $T_{\max}(t_0,u_0,v_0)=\infty,$  then there exists $t^1(u_0)$ such that
\begin{equation}
\label{L1-bound-eq00}
0\leq \int_\Omega u(\cdot,t;t_0,u_0,v_0) \leq  M_1:=\tilde M_1+1 \quad \forall\, t\geq t_0+t^1.
\end{equation}
}
\end{lemma}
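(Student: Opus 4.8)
The plan is to integrate the first equation of \eqref{u-v-eq00} over $\Omega$ and obtain a scalar differential inequality for the mass $m(t):=\int_\Omega u(\cdot,t;t_0,u_0,v_0)$. First I would note that the diffusion term $\int_\Omega \Delta u = \int_{\p\Omega}\frac{\p u}{\p n}=0$ and the chemotaxis term $-\chi\int_\Omega \nabla\cdot(u\nabla v)=-\chi\int_{\p\Omega} u\frac{\p v}{\p n}=0$ both vanish by the no-flux boundary conditions, so that
\[
\frac{d}{dt}m(t)=\int_\Omega u\Big(a_0(t,x)-a_1(t,x)u-a_2(t,x)\int_\Omega u\Big).
\]
Using $a_0(t,x)\le a_{0,\sup}$, $a_1(t,x)\ge a_{1,\inf}(t)$, and $-a_2(t,x)\le (a_{2,\inf}(t))_-$ (so that $-a_2(t,x)\int_\Omega u \le |\Omega|(a_{2,\inf}(t))_-\cdot\frac{1}{|\Omega|}\int_\Omega u$ after care with signs — here the key bound is $-\big(\int_\Omega u\big)\int_\Omega a_2(t,x)u \le (a_{2,\inf}(t))_- \big(\int_\Omega u\big)^2$), I would bound the right-hand side from above by
\[
a_{0,\sup}\,m(t)-\big(a_{1,\inf}(t)-|\Omega|(a_{2,\inf}(t))_-\big)\frac{m(t)^2}{|\Omega|},
\]
where the quadratic term comes from the pointwise estimate $\int_\Omega u^2 \ge \frac{1}{|\Omega|}\big(\int_\Omega u\big)^2$ by Cauchy–Schwarz. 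Setting $\delta:=\inf_{t\in\RR}\big(a_{1,\inf}(t)-|\Omega|(a_{2,\inf}(t))_-\big)>0$ by hypothesis, we get the logistic-type inequality $m'(t)\le a_{0,\sup}m(t)-\frac{\delta}{|\Omega|}m(t)^2$.

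Next I would invoke the standard comparison argument for scalar ODEs: if $m(t_0)\le \tilde M_1=\frac{|\Omega|a_{0,\sup}}{\delta}$ then $m(t)$ stays $\le \tilde M_1$ for all $t$ in the maximal interval, and if $m(t_0)>\tilde M_1$ then $m(t)$ is decreasing as long as it exceeds $\tilde M_1$, hence $m(t)\le \max\{m(t_0),\tilde M_1\}$ throughout $[t_0,t_0+T_{\max})$; together with nonnegativity of $u$ (which holds by the maximum principle / is part of the solution's being a nonnegative classical solution), this yields \eqref{L1-bound-eq000}. For the asymptotic bound \eqref{L1-bound-eq00}, when $T_{\max}=\infty$ I would solve the Bernoulli/Riccati comparison ODE $\dot y = a_{0,\sup}y-\frac{\delta}{|\Omega|}y^2$ explicitly (or just use that every solution converges to the equilibrium $\tilde M_1$) to conclude $\limsup_{t\to\infty}m(t)\le \tilde M_1$, so there is a time $t^1=t^1(\|u_0\|_\infty)$, depending on $m(t_0)\le M_0(\|u_0\|_\infty)$, after which $m(t)\le \tilde M_1+1=M_1$. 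The dependence of $t^1$ only on $\|u_0\|_\infty$ (and not on $v_0$ or $t_0$) follows because the comparison ODE does not see $v$ at all and is autonomous, and the initial datum for the comparison is controlled by $\int_\Omega u_0 \le |\Omega|\|u_0\|_\infty$.

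The only mild subtlety — and the step I would be most careful about — is the handling of the nonlocal term's sign: one must correctly estimate $-\big(\int_\Omega u\big)\int_\Omega a_2(t,x)u(t,x)\,dx$ from above. Writing $a_2 = a_2^+ - a_2^-$ pointwise, the positive part only helps (it contributes a nonpositive term since $u\ge 0$), while for the negative part one uses $\int_\Omega a_2^-(t,x)u\,dx \le (a_{2,\inf}(t))_-\int_\Omega u$ (note $a_2^-(t,x)=\max\{-a_2(t,x),0\}\le (a_{2,\inf}(t))_-$ precisely because $a_{2,\inf}(t)=\inf_x a_2(t,x)$), giving the claimed bound $\le (a_{2,\inf}(t))_-\,m(t)^2$. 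Everything else is the textbook logistic-ODE comparison, so no further obstacle is anticipated; I would present it concisely.
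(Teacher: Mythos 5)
Your proposal is correct and follows essentially the same route as the paper: integrate the first equation over $\Omega$, observe that the diffusion and chemotaxis fluxes vanish by the Neumann conditions, bound the reaction term using $a_{0,\sup}$, $a_{1,\inf}(t)$, $(a_{2,\inf}(t))_-$ together with the Cauchy--Schwarz estimate $\int_\Omega u^2\ge\frac{1}{|\Omega|}\big(\int_\Omega u\big)^2$, and then compare with the autonomous logistic ODE to get both the uniform bound and the eventual bound with a time $t^1$ depending only on $\|u_0\|_\infty$. The extra care you take with the sign of the nonlocal term is a welcome elaboration of a point the paper passes over quickly, but it is the same estimate.
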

\begin{proof}
By integrating the first equation of \eqref{u-v-eq00}, we get
\begin{align}\label{L1-bound-eq01}
  \frac{d}{dt}\int_{\Omega} u =& \int_{\Omega} u\big(a_0(t,x)-a_1(t,x)u-a_2(t,x)\int_{\Omega} u\big) \nonumber\\
  \leq & \int_{\Omega} u\big(a_{0,\sup}-a_{1,\inf}(t)u+(a_{2,\inf}(t))_-\int_{\Omega} u\big) \nonumber\\
  \leq & \int_{\Omega} u\big(a_{0,\sup}-\frac{a_{1,\inf}(t)-|\Omega|(a_{2,\inf}(t))_-}{|\Omega|}\int_{\Omega} u\big)\nonumber\\
  \leq & \int_{\Omega} u\big(a_{0,\sup}-\frac{\inf_{t \in \mathbb{R}}\big(a_{1,\inf}(t)-|\Omega|(a_{2,\inf}(t))_-\big)}{|\Omega|}\int_{\Omega} u\big).
\end{align}
{Then \eqref{L1-bound-eq000} follows from \eqref{L1-bound-eq01} and the comparison principle for ordinary differential equations. Furthermore if $T_{\max}(t_0,u_0,v_0)=\infty,$ we get   $\int_\Omega u(\cdot,t;t_0,u_0,v_0)\leq y(t;t_0,|\Omega|\|u_0\|_\infty)$ for all $t\geq t_0$
with $y(t;t_0,|\Omega|\|u_0\|_\infty)$ satisfying the following ordinary differential equation,
\begin{equation}
 \label{L1-bound-eq01}
y'=y\big(a_{0,\sup}-\frac{\inf_{t \in \mathbb{R}}\big(a_{1,\inf}(t)-|\Omega|(a_{2,\inf}(t))_-\big)}{|\Omega|}y\big),\quad t>t_0,
\end{equation}
with initial $y(t_0)=|\Omega|\|u_0\|_\infty.$
{ This implies that} there exists $t^1_\epsilon=t^1(u_0)$
\[
y(t;t_0,|\Omega|\|u_0\|_\infty) \leq M_1  \quad \forall t\geq t_0+t^1.
\]
Thus equation \eqref{L1-bound-eq00} follows.}
\end{proof}

Next, we prove Theorem \ref{thm-global-000} under the assumption of {\bf (H1)}.

\begin{proof}[Proof of Theorem \ref{thm-global-000} with the assumption {\bf (H1)}]

 {Assume that {\bf (H1)} holds. Theorem \ref{thm-global-000}  can then  be proved by properly  modifying arguments of \cite[Theorem 2.2]{JZYYLGBXZ2018}.
 For completeness, we provide a proof in the following.}

  We divide  the proof in six steps. For simplicity in notation, we put $T_{\max}=T_{\max}(t_0,u_0,v_0)$, and
  $$
  (u(t),v(t))=(u(\cdot,t;t_0,u_0,v_0),v(\cdot,t;t_0,u_0,v_0)).
  $$
Note that, by {\bf (H1)},
$$a_{1,\inf}>\inf_{q>\max\{1,\frac{n}{2}\}}\Big(\frac{q-1}{q}[C_{q+1}]^{\frac{1}{q+1}} \mu^{\frac{1}{q+1}}\Big)|\chi|.
$$
Hence there is $\gamma>1$ such that
$$a_{1,\inf}>\Big(\frac{\gamma-1}{\gamma}[C_{\gamma+1}]^{\frac{1}{\gamma+1}} \mu^{\frac{1}{\gamma+1}}\Big)|\chi|
$$
and hence
$$
|\chi | \mu^{\frac{1}{\gamma+1}}C^{\frac{1}{\gamma+1}}_{\gamma+1}>\big(|\chi|\mu^{\frac{1}{\gamma+1}} C^{\frac{1}{\gamma+1}}_{\gamma+1}-a_{1,\inf}\big)\gamma.
$$
Therefore, there is $\gamma>1$ such that
$$
\gamma \in \Big(1,\frac{|\chi | \mu^{\frac{1}{\gamma+1}}C^{\frac{1}{\gamma+1}}_{\gamma+1}}{\big(|\chi|\mu^{\frac{1}{\gamma+1}} C^{\frac{1}{\gamma+1}}_{\gamma+1}-a_{1,\inf}\big)_+} \Big).
$$

\smallskip

\noindent{\bf Step 1.} {\it  In this step, we prove that  for any $\gamma>1$ satisfying that $\gamma \in \Big(1,\frac{|\chi | \mu^{\frac{1}{\gamma+1}}C^{\frac{1}{\gamma+1}}_{\gamma+1}}{\big(|\chi|\mu^{\frac{1}{\gamma+1}} C^{\frac{1}{\gamma+1}}_{\gamma+1}-a_{1,\inf}\big)_+} \Big),$  there is $C=C(\gamma,u_0, v_0,a_0,a_1,a_2,|\Omega|)$  such that}
\begin{equation}
\label{new-global-eq1}
 \int_{\Omega}u^{\gamma} (t)     \leq C   \quad \forall\,\, t \in [t_0, t_0+T_{\max}).
 \end{equation}

 First, by multiplying the  first equation of  $\eqref{u-v-eq00}$  by $u^{\gamma-1}(t)$ and integrating with respect to $x$ over $\Omega,$  we have for $t\in (t_0,t_0+T_{\max})$ that
      \begin{align*}
      \frac{1}{\gamma}\frac{d}{dt}\int_{\Omega}u^{\gamma}(t)+\frac{4(\gamma-1)}{\gamma^2}\int_{\Omega}  |\nabla u^{\frac{\gamma}{2}}(t)|^2=&(\gamma-1)\chi\int_{\Omega}u^{\gamma-1}(t)\nabla u (t)\cdot \nabla v(t) \\   &+\int_{\Omega}u^{\gamma}(t)\Big[a_0(t,\cdot)-a_1(t,\cdot)u(t)-a_2(t,\cdot)\int_{\Omega}u(t) \Big].
              \end{align*}
By Lemma \ref{L1-bound}, we have
{
\[\int_{\Omega} u(\cdot,t;t_0,u_0,v_0) \leq \max\bigg\{\int_{\Omega}u_0, \frac{|\Omega|a_{0,\sup}}{\inf_{t \in \mathbb{R}}\big(a_{1,\inf}(t)-|\Omega|(a_{2,\inf}(t))_-\big)}\bigg\}:=M_0
\]
for all $ t \in [t_0, t_0+T_{\max}).$
}
Thus
{
 \begin{align}
 \label{proof-global-2-eq00}
   &   \frac{1}{\gamma}\frac{d}{dt}\int_{\Omega}u^{\gamma}(t)+\frac{4(\gamma-1)}{\gamma^2}\int_{\Omega}  |\nabla u^{\frac{\gamma}{2}}(t)|^2\nonumber\\
 &\leq(\gamma-1)\chi\int_{\Omega}u^{\gamma-1}(t)\nabla u (t)\cdot \nabla v(t) \nonumber\\  
  &\,\,\,\,  +\int_{\Omega}u^{\gamma}(t)\Big[a_{0,\sup}-a_{1,\inf}u(t)+(a_{2,\inf})_-M_0\Big]\nonumber\\
      &=-\frac{\chi(\gamma-1)}{\gamma}\int_{\Omega}u^\gamma(t) \Delta v(t) -\frac{\gamma+1}{\gamma}\int_{\Omega}u^\gamma(t)\nonumber\\  
       &\,\,\,\, +\int_{\Omega}u^{\gamma}(t)\Big[a_{0,\sup}+\frac{\gamma+1}{\gamma}+(a_{2,\inf})_-M_0-a_{1,\inf}u(t)\Big].                      \end{align}
}
 Let $\epsilon>0$. By Young's inequality with $p=\frac{\gamma+1}{\gamma}$ and $q=\gamma+1$, we get
 \begin{align*}
 &\int_{\Omega}\Big[a_{0,\sup}+\frac{\gamma+1}{\gamma}+(a_{2,\inf})_-M_0\Big]u^{\gamma}(t)\\
 &\leq \epsilon\int_{\Omega}u^{\gamma+1}+\underbrace{\frac{1}{\gamma+1}\big[\frac{\gamma+1}{\gamma}\epsilon\big]^{-\gamma}[a_{0,\sup}
 +\frac{\gamma+1}{\gamma}+(a_{2,\inf})_-M_0\Big]^{\gamma+1}|\Omega|}_{C_1(\epsilon,a_0,a_1,a_2\gamma,\int_\Omega u_0):=C_1}.
 \end{align*}

 By combining this last equation with equation \eqref{proof-global-2-eq00}, we get
  \begin{align}
  \label{proof-global-2-eq01}
      \frac{1}{\gamma}\frac{d}{dt}\int_{\Omega}u^{\gamma}(t)+\frac{4(\gamma-1)}{\gamma^2}\int_{\Omega}  |\nabla u^{\frac{\gamma}{2}}(t)|^2\leq &\frac{|\chi|(\gamma-1)}{\gamma}\int_{\Omega}u^\gamma(t) |\Delta v(t)| -\frac{\gamma+1}{\gamma}\int_{\Omega}u^\gamma(t)\nonumber\\   &+(\epsilon-a_{1,\inf})\int_{\Omega}u^{\gamma+1}(t) + C_1.                      \end{align}
  Let $r>0$. By Young's inequality with $p=\frac{\gamma+1}{\gamma}$ and $q=\gamma+1$ again, we get
  \[\frac{|\chi|(\gamma-1)}{\gamma}\int_{\Omega}u^\gamma(t) |\Delta v(t)|\leq r\int_{\Omega}u^{\gamma+1}(t)+\underbrace{\frac{1}{\gamma+1}\big[\frac{\gamma+1}{\gamma}\big]^{-\gamma}\big[\frac{\gamma-1}{\gamma}\big]^{\gamma+1} }_{A_\gamma} r^{-\gamma}|\chi|^{\gamma+1}\int_{\Omega}|\Delta v(t)|^{\gamma+1}. \]
  By combining this last equation with equation \eqref{proof-global-2-eq01}, we get
  \begin{align}
  \label{proof-global-2-eq02}
      \frac{1}{\gamma}\frac{d}{dt}\int_{\Omega}u^{\gamma}(t)+\frac{4(\gamma-1)}{\gamma^2}\int_{\Omega}  |\nabla u^{\frac{\gamma}{2}}(t)|^2\leq &-\frac{\gamma+1}{\gamma}\int_{\Omega}u^\gamma(t)+(\epsilon+ r-a_{1,\inf})\int_{\Omega}u^{\gamma+1}(t)\nonumber\\
         &+A_\gamma r^{-\gamma}|\chi|^{\gamma+1}\int_{\Omega}|\Delta v(t)|^{\gamma+1} + C_1.                      \end{align}
 Let  $s_0 \in (0,T_{\max})$  be fixed.  By Lemma \ref{local-existence},  there exists a positive constant $K=K(u_0,v_0)$ such that
  \begin{equation}\label{proof-global-2-eq03}
    \|u(t)\|_{\infty}\leq K,\,\, \|v(t)\|_{\infty}\leq K\,\,  \forall\, t \in (t_0,t_0+s_0],  \,\text{and}\,\, \|\Delta v(t_0+s_0)\|_{ \gamma+1}\leq K.
  \end{equation}

  Next let $y$ be the solution of the following ordinary differential equation,
  \begin{equation}
 \label{proof-global-2-ode00}
\begin{cases}
y'=-(\gamma+1)y+\gamma f(t),t\in (t_0+s_0, t_0+T_{\max})\cr
y(s_0)=\|u(t_0+s_0)\|_\infty,
\end{cases}
 \end{equation}
 where
 \[f(t)=(\epsilon+ r-a_{1,\inf})\int_{\Omega}u^{\gamma+1}(t)+A_\gamma r^{-\gamma}|\chi|^{\gamma+1}\int_{\Omega}|\Delta v(t)|^{\gamma+1} + C_1. \]
 Then, by equations \eqref{proof-global-2-eq02} and \eqref{proof-global-2-ode00}, the comparison principle for parabolic equations,  and variation of constant formula, we get
 \[\int_{\Omega}u^{\gamma}(t)\leq y(t)=e^{-(\gamma+1)(t-t_0-s_0)}\|u(t_0+s_0)\|_\infty+\gamma\int_{t_0+s_0}^{t}e^{-(\gamma+1)(t-s)}f(s)ds\]
  for all $t_0+s_0\le t<t_0+T_{\max}$.
 This is equivalent to
 \begin{align}
  \label{proof-global-2-eq04}
      \frac{1}{\gamma}\int_{\Omega}u^{\gamma}(t)\leq &\frac{1}{\gamma}e^{-(\gamma+1)(t-t_0-s_0)}\|u(t_0+s_0)\|_\infty+(\epsilon+ r-a_{1,\inf})\int_{t_0+s_0}^{t}e^{-(\gamma+1)(t-s)}\int_{\Omega}u^{\gamma+1}(s)ds\nonumber\\
         &+A_\gamma r^{-\gamma}|\chi|^{\gamma+1}\int_{t_0+s_0}^{t}e^{-(\gamma+1)(t-s)}\int_{\Omega}|\Delta v(s)|^{\gamma+1}ds + C_1\int_{t_0+s_0}^{t}e^{-(\gamma+1)(t-s)}ds  \nonumber\\
         \leq &(\epsilon+ r-a_{1,\inf})\int_{t_0+s_0}^{t}e^{-(\gamma+1)(t-s)}\int_{\Omega}u^{\gamma+1}(s)ds\nonumber\\
         &+A_\gamma r^{-\gamma}|\chi|^{\gamma+1}\int_{t_0+s_0}^{t}e^{-(\gamma+1)(t-s)}\int_{\Omega}|\Delta v(s)|^{\gamma+1}ds\nonumber\\
          & + \frac{K}{\gamma}e^{-(\gamma+1)(t-t_0-s_0)}+\frac{C_1}{\gamma+1}  \end{align}
for $t_0+s_0\le t<t_0+T_{\max}$.

 {Now, by Lemma \ref{prelimiaries-lm-00},}
 \begin{align}
\label{proof-global-2-eq05}
   &\int_{t_0+s_0}^{t}e^{-(\gamma+1)(t-s)}\|\Delta v(\cdot,s)\|^{\gamma+1}_{L^{\gamma+1}(\Omega)} ds \nonumber\\
  &\leq { C_{\gamma+1} \mu} \int_{t_0+s_0}^{t}e^{-(\gamma+1)(t-s)}\|u(\cdot,s)\|^{\gamma+1}_{L^{\gamma+1}(\Omega)}ds\nonumber\\
&+ C_{\gamma+1}e^{-(\gamma+1) (t-t_0-s_0)}\big(\|v(\cdot,t_0+s_0)\|^{\gamma+1}_{L^{\gamma+1}(\Omega)}+\|\Delta v(\cdot,t_0+s_0)\|^{\gamma+1}_{L^{\gamma+1}(\Omega)}\big)
\end{align}
for $t_0+s_0\le t<t_0+T_{\max}$.
Combining equations \eqref{proof-global-2-eq04} and \eqref{proof-global-2-eq05}, we get using in addition \eqref{proof-global-2-eq03}
\begin{align}
  \label{proof-global-2-eq06}
      \frac{1}{\gamma}\int_{\Omega}u^{\gamma}(t)
         \leq &{ (\epsilon+A_\gamma C_{\gamma+1}r^{-\gamma}|\chi|^{\gamma+1}\mu}+ r-a_{1,\inf})\int_{t_0+s_0}^{t}e^{-(\gamma+1)(t-s)}\int_{\Omega}u^{\gamma+1}(s)ds\nonumber\\
         & + e^{-(\gamma+1)(t-t_0-s_0)}\Big(\frac{K}{\gamma}  +2A_\gamma r^{-\gamma}|\chi|^{\gamma+1}C_{\gamma+1}K^{\gamma+1} \Big)+ \frac{C_1}{\gamma+1} \end{align}
         for $t_0+s_0\le t<t_0+T_{\max}$.

 We claim that
\begin{equation}\label{proof-global-2-eq07}
 \min_{r>0} \big(\underbrace{A_\gamma C_{\gamma+1}r^{-\gamma}|\chi|^{\gamma+1}\mu+ r}_{H(r)}\big)=H\big(\underbrace{(A_\gamma C_{\gamma+1}\gamma)^{\frac{1}{\gamma+1}}|\chi| \mu^{\frac{1}{\gamma+1}}}_{r_0}\big)=\frac{\gamma-1}{\gamma}C^{\frac{1}{\gamma+1}}_{\gamma+1}|\chi| \mu^{\frac{1}{\gamma+1}}.
\end{equation}
Indeed, we have $H'(r)=-\gamma A_\gamma C_{\gamma+1} r^{-\gamma-1}|\chi|^{\gamma+1}\mu+1.$ Thus
{  $$H'(r)=0 \iff r=\big(\gamma A_\gamma C_{\gamma+1}\mu\big)^{\frac{1}{\gamma+1}}|\chi|.$$ }
Furthermore, $H^{''}(r)=\gamma(\gamma+1) A_\gamma C_{\gamma+1} r^{-\gamma-2}|\chi|^{\gamma+1}>0,\,\, \forall r>0.$ Thus $\min_{r>0}H(r)=H(r_0),$ which is given by
\begin{align}
\label{min-H-eq00}
H(r_0)=&A_\gamma C_{\gamma+1}\big(\big(\gamma A_\gamma C_{\gamma+1}|\chi|^{\gamma+1}\mu\big)^{\frac{1}{\gamma+1}}\big)^{-\gamma}|\chi|^{\gamma+1}\mu+\big(\gamma A_\gamma C_{\gamma+1}|\chi|^{\gamma+1}\mu\big)^{\frac{1}{\gamma+1}}\nonumber\\
=&A_\gamma^{\frac{1}{\gamma+1}} C_{\gamma+1}^{\frac{1}{\gamma+1}}|\chi| \mu^{\frac{1}{\gamma+1}}\big(\gamma^{-\frac{\gamma}{\gamma+1}}+\gamma^\frac{1}{\gamma+1}\big)\nonumber\\
=&A_\gamma^{\frac{1}{\gamma+1}} C_{\gamma+1}^{\frac{1}{\gamma+1}}|\chi| \mu^{\frac{1}{\gamma+1}}\gamma^{-\frac{\gamma}{\gamma+1}}(1+\gamma).
\end{align}
Note that $A_\gamma=\frac{1}{\gamma+1}\big[\frac{\gamma+1}{\gamma}\big]^{-\gamma}\big[\frac{\gamma-1}{\gamma}\big]^{\gamma+1}.$ Thus
\begin{align}
\label{min-H-eq01}
A_\gamma^{\frac{1}{\gamma+1}}=&\big[\frac{1}{\gamma+1}\big]^\frac{1}{\gamma+1}\big[\frac{\gamma+1}{\gamma}\big]^{-\frac{\gamma}{\gamma+1}}\big(\frac{\gamma-1}{\gamma}\big)\nonumber\\
=&\big[\gamma+1\big]^{-\frac{1}{\gamma+1}}\big[\frac{\gamma+1}{\gamma}\big]^{-\frac{\gamma}{\gamma+1}}\big(\frac{\gamma-1}{\gamma}\big)\nonumber\\
=&\big[\gamma+1\big]^{-1}\gamma^{\frac{\gamma}{\gamma+1}}\big(\frac{\gamma-1}{\gamma}\big).
\end{align}
From equations \eqref{min-H-eq00} and \eqref{min-H-eq01}, we get
\begin{align*}
H(r_0)=&A_\gamma^{\frac{1}{\gamma+1}}C_{\gamma+1}^{\frac{1}{\gamma+1}}|\chi| \mu^{\frac{1}{\gamma+1}}\gamma^{-\frac{\gamma}{\gamma+1}}(1+\gamma)\\
=&\big[\gamma+1\big]^{-1}\gamma^{\frac{\gamma}{\gamma+1}}\big(\frac{\gamma-1}{\gamma}\big)C_{\gamma+1}^{\frac{1}{\gamma+1}}|\chi| \mu^{\frac{1}{\gamma+1}}\gamma^{-\frac{\gamma}{\gamma+1}}(1+\gamma)\\
=&\big(\frac{\gamma-1}{\gamma}\big)C_{\gamma+1}^{\frac{1}{\gamma+1}}|\chi| \mu^{\frac{1}{\gamma+1}},
\end{align*}
and \eqref{proof-global-2-eq07} follows.

Finally, combining  equations \eqref{proof-global-2-eq06} and \eqref{proof-global-2-eq07}, we get
\begin{align}
  \label{proof-global-2-eq08}
      \frac{1}{\gamma}\int_{\Omega}u^{\gamma}(t)\leq
 &(\epsilon+{ \frac{\gamma-1}{\gamma}C^{\frac{1}{\gamma+1}}_{\gamma+1}|\chi|\mu^{\frac{1}{\gamma+1}}}-a_{1,\inf})\int_{t_0+s_0}^{t}e^{-(\gamma+1)(t-s)}\int_{\Omega}u^{\gamma+1}(s)ds\nonumber\\
         & + e^{-(\gamma+1)(t-t_0-s_0)}\Big(\frac{K}{\gamma}  +2A_\gamma r_0^{-\gamma}|\chi|^{\gamma+1}C_{\gamma+1}K^{\gamma+1}|\Omega| \Big)+ \frac{C_1}{\gamma+1}, \end{align}
         where  $r_0=\big(\gamma A_\gamma C_{\gamma+1}\mu\big)^{\frac{1}{\gamma+1}}|\chi|$.
{ Since $\gamma \in \Big(1, \frac{|\chi| \mu^{\frac{1}{\gamma+1}} C^{\frac{1}{\gamma+1}}_{\gamma+1}}{\big(|\chi| \mu^{\frac{1}{\gamma+1}} C^{\frac{1}{\gamma+1}}_{\gamma+1}-a_{1,\inf}\big)_+} \Big),$  we have $a_{1,\inf}>\frac{\gamma-1}{\gamma}C^{\frac{1}{\gamma+1}}_{\gamma+1}|\chi|\mu^{\frac{1}{\gamma+1}}.$ By choosing $\epsilon<a_{1,\inf}-\frac{\gamma-1}{\gamma}C^{\frac{1}{\gamma+1}}_{\gamma+1}|\chi|\mu^{\frac{1}{\gamma+1}},$  we get from \eqref{proof-global-2-eq08} for $t \in (t_0+s_0,T_{\max})$ that
\begin{equation}\label{proof-global-2-eq09}
      \frac{1}{\gamma}\int_{\Omega}u^{\gamma}(t)
         \leq  e^{-(\gamma+1)(t-t_0-s_0)}\Big(\frac{K}{\gamma}  +2A_\gamma r_0^{-\gamma}|\chi|^{\gamma+1}C_{\gamma+1}K^{\gamma+1}|\Omega| \Big)+ \frac{C_1}{\gamma+1}. \end{equation}
The proof of Step 1 follows from \eqref{proof-global-2-eq03} and \eqref{proof-global-2-eq09}.
}

\smallskip

\noindent{\bf Step 2.} {\it Let $q_0>\max\{1,\frac{n}{2}\}$ be such that   $a_{1,\inf}>\frac{q_0-1}{q_0}|\chi| [C_{q_0+1}{ \mu}]^{\frac{1}{q_0+1}}.$ In this step, we prove that for any $q \in \left[1,\frac{nq_0}{(n-q_0)_+}\right),$   there exists a constant $C=C(q_0,q,u_0,v_0,a_0,a_1,a_2,|\Omega|)$  such that}
\begin{equation}
\label{new-global-step2-eq00}
 \|\nabla v(t)\|_q    \leq C   \quad \forall\,\, t \in (t_0, t_0+T_{\max}).
 \end{equation}

First, by  Step 1,  there is $C=C(q_0, u_0,v_0, a_i,|\Omega|)$  such that
\begin{equation}
\label{new-global-step2-eq01}
 \int_{\Omega}u^{q_0} (t)     \leq C   \quad \forall\,\, t \in [t_0, t_0+T_{\max}).
 \end{equation}
Next, by the second equation in \eqref{u-v-eq00} and the variation of constant formula, we have  for all $t\in (t_0,t_0+T_{\max})$ that
\[v(t)=e^{{ {(\frac{t-t_0}{\tau})}}(\Delta-\lambda I)}v_0+\frac{\mu}{\tau}\int_{t_0}^{t}e^{{ (\frac{t-s}{\tau})}(\Delta-\lambda I)}u(s)ds.\]
By Lemma \ref{estimates-lm-00}(iv) and (v),
 we have
\begin{align}\label{new-global-step2-eq03b}
  \|\nabla v(t)\|_q  \leq &  \| \nabla e^{{{(\frac{t-t_0}{\tau})}}(\Delta-\lambda I)}v_0\|_q +\frac{\mu}{\tau}\int_{t_0}^{t}\|\nabla e^{{ (\frac{t-s}{\tau})}(\Delta-\lambda I)}u(s)\|_q \nonumber\\
    \leq & K_4(q) e^{-\lambda \big(\frac{t-t_0}{\tau}\big)}\|v_0\|_{W^{1,\infty}} \nonumber \\
   & + K_5(q_0,q)\int_{t_0}^{t}\big(1+(\frac{t-s}{\tau})^{-\frac{1}{2}-\frac{n}{2}(\frac{1}{q_0}-\frac{1}{q})_+}\big)e^{-\lambda(\frac{t-s}{\tau})}\|u(s)\|_{q_0}ds \nonumber \\
   \leq &  K_4(q) e^{-\lambda \big(\frac{t-t_0}{\tau}\big)}\|v_0\|_{W^{1,\infty}}\nonumber\\
   & + K_5(q_0,q) \tau \sup_{t\in [t_0,t_0+T_{\max})}\|u(t)\|_{L^{q_0}} \int_{0}^{\infty}\big(1+s^{-\frac{1}{2}-\frac{n}{2}(\frac{1}{q_0}-\frac{1}{q})_+}\big)e^{-\lambda s} ds,
\end{align}
for each $ t\in (t_0,t_0+T_{\max}),$ which is finite provided that $\frac{1}{2}+\frac{n}{2}(\frac{1}{q_0}-\frac{1}{q})_+<1$ which is equivalent to $q< \frac{nq_0}{(n-q_0)_+}.$ Thus Step 2 follows from  \eqref{new-global-step2-eq03b}.

\medskip

\noindent{\bf Step 3.} {\it Let $q_0$ be given as in Step 2. In this step, we prove that  for any { $ \gamma \geq 1$}, there is $C=C(\gamma, u_0,v_0,a_0,a_1,a_2|\Omega|)$ such that}
 \begin{equation}
 \label{new-global-eq3}
 \int_{\Omega}u^{\gamma}(t) \leq C \quad \forall t \in [t_0, t_0+T_{\max}).
 \end{equation}

 First, note that  $q_0<\frac{nq_0}{2(n-q_0)_+}.$  By Step 1 and Step 2, we have
 $$
 \sup_{t\in [t_0,t_0+T_{\max})} \int_{\Omega}u^{q_0}(t)<\infty
 $$
 and
 $$
  \sup_{t\in [t_0,t_0+T_{\max})} \int_{\Omega}|\nabla v|^{2q_0}(t) <\infty.
$$

{ Furthermore if $\gamma \leq q_0,$ by the continuous inclusion of $L^{q_0}(\Omega)$ into $L^{\gamma}(\Omega)$, there exists a positive constant $C_0$ depending only on $\Omega, n, q_0$ and $\gamma$ such that
\[ \|u(t)\|_{L^\gamma }\leq C_0\|u(t)\|_{L^{q_0}},\]
 and \eqref{new-global-eq3} fellows.
}

Next { suppose $\gamma>q_0$}. By the arguments in Step 1, we get
 \begin{align}
 \label{proof-global-step3-eq00}
      \frac{1}{\gamma}\frac{d}{dt}\int_{\Omega}u^{\gamma}(t)+\frac{4(\gamma-1)}{\gamma^2}\int_{\Omega}  |\nabla u^{\frac{\gamma}{2}}(t)|^2\leq &(\gamma-1)\chi\int_{\Omega}u^{\gamma-1}(t)\nabla u (t)\cdot \nabla v(t) \nonumber\\   &+\int_{\Omega}u^{\gamma}(t)\Big[a_{0,\sup}-a_{1,\inf}u(t)+(a_{2,\inf})_-M_0\Big].                      \end{align}
By Young's inequality we get
 \[ { (a_{0,\sup}+(a_{2,\inf})_-M_0)} \int_{\Omega}u^{\gamma}(t) \leq \frac{a_{1,\inf}}{2}\int_{\Omega}u^{\gamma+1}(t)+C(\gamma,a_i, M_0,|\Omega|) .   \]
This together with \eqref{proof-global-step3-eq00} implies that
 \begin{align}\label{proof-global-step3-eq01}
&\frac{1}{\gamma}\frac{d}{dt}\int_{\Omega}u^{\gamma}(t)+\frac{4(\gamma-1)}{\gamma^2}\int_{\Omega}  |\nabla u^{\frac{\gamma}{2}}(t)|^2 \nonumber\\
&\leq (\gamma-1)\chi\int_{\Omega}u^{\gamma-1}(t)\nabla u (t)\cdot \nabla v(t)- \frac{a_{1,\inf}}{2}\int_{\Omega}u^{\gamma+1}(t)+C(\gamma,a_i,M_0,|\Omega|).  \end{align}
By Young's inequality again, we have
 \begin{align*}
 &(\gamma-1)\chi\int_{\Omega}u^{\gamma-1}(t)\nabla u (t)\cdot \nabla v(t)\\
 &\leq \frac{\gamma-1}{2}\int_{\Omega}u^{\gamma-2}(t)|\nabla u (t)|^2+\frac{\chi^2(\gamma-1)}{2}\int_{\Omega}u^{\gamma}(t)|\nabla v (t)|^2\\
 &=\frac{2(\gamma-1)}{\gamma^2}\int_{\Omega}  |\nabla u^{\frac{\gamma}{2}}(t)|^2+\frac{\chi^2(\gamma-1)}{2}\int_{\Omega}u^{\gamma}(t)|\nabla v (t)|^2.
 \end{align*}
 This together with \eqref{proof-global-step3-eq01} implies that
  \begin{align}\label{proof-global-step3-eq02}
&\frac{1}{\gamma}\frac{d}{dt}\int_{\Omega}u^{\gamma}(t)+\frac{2(\gamma-1)}{\gamma^2}\int_{\Omega}  |\nabla u^{\frac{\gamma}{2}}(t)|^2 \nonumber\\
&\leq \frac{\chi^2(\gamma-1)}{2}\int_{\Omega}u^{\gamma}(t)|\nabla v (t)|^2- \frac{a_{1,\inf}}{2}\int_{\Omega}u^{\gamma+1}(t)+C(\gamma,a_i,M_0,|\Omega|).  \end{align}
By Holder's inequality, we have
\[\frac{\chi^2(\gamma-1)}{2}\int_{\Omega}u^{\gamma}(t)|\nabla v (t)|^2\leq \frac{\chi^2(\gamma-1)}{2}\Big(\int_{\Omega}u^{\frac{\gamma q_0}{q_0-1}}(t)\Big)^{\frac{q_0-1}{q_0}}\Big(\int_{\Omega}|\nabla v (t)|^{ 2q_0}\Big)^{\frac{1}{q_0}}.\]
By Gagliardo-Nirenberg inequality, there exists  a positive constant $C_0$ depending on the domain $\Omega$ and $\gamma$ such that
\begin{eqnarray*}
 \|u^{\frac{\gamma}{2}}\|^2_{L^{\frac{2q_0}{q_0-1}}} &\leq &C_0 \|\nabla u^{\frac{\gamma}{2}}(t)\|^{2a}_{L^2}  \|u^{\frac{\gamma}{2}}(t)\|_{L^{\frac{2 q_0}{\gamma}}}^{2(1-a)}+C_0\|u^{\frac{\gamma}{2}}(t)\|_{L^{\frac{2 q_0}{\gamma}}}^{2},
\end{eqnarray*}
where $a=\frac{\frac{n\gamma}{2q_0}-\frac{n(q_0-1)}{2q_0}}{1+\frac{n}{2}(\frac{\gamma}{q_0}-1)}.$
Since $\frac{n}{2}<q_0<\gamma,$ we have $0<a<1.$
By applying Young's Inequality, we get for any $\epsilon>0$
 \begin{align*}
&{ C_0 \|u^{\frac{\gamma}{2}}(t)\|_{L^{\frac{2 q_0}{\gamma}}}^{2(1-a)} \frac{\chi^2(\gamma-1)}{2}\Big(\int_{\Omega}|\nabla v (t)|^{ 2q_0}\Big)^{\frac{1}{q_0}}} \|\nabla u^{\frac{\gamma}{2}}(t)\|^{2a}_{L^2}\\
&\leq \epsilon\|\nabla u^{\frac{\gamma}{2}}(t)\|^2_{L^2}+{ C(\epsilon,\gamma,\sup_{t\in[t_0,t_0+T_{|max})}\|u(t)\|_{L^{q_0}},\sup_{t\in [t_0,t_0+T_{\max})}\|\nabla v\|_{L^{2q_0}},a_i,|\Omega|)}.
 \end{align*}
Put
$$\sup_t \|u(t)\|_{L^{q_0}}=\sup_{t\in[t_0,t_0+T_{|max})}\|u(t)\|_{L^{q_0}}\quad {\rm and}\quad \sup_t \|\nabla v\|_{L^{2q_0}}=\sup_{t\in [t_0,t_0+T_{\max})}\|\nabla v\|_{L^{2q_0}}.
$$
Then
\[\frac{\chi^2(\gamma-1)}{2}\int_{\Omega}u^{\gamma}(t)|\nabla v (t)|^2\leq  \epsilon\|\nabla u^{\frac{\gamma}{2}}(t)\|^2_{L^2}+C(\epsilon,\gamma,\sup_{t}\|u(t)\|_{L^{q_0}},\sup_{t}\|\nabla v\|_{L^{2q_0}},a_0,a_1,a_2|\Omega|). \]
It then follows from equation \eqref{proof-global-step3-eq02} that
\begin{align*}
&\frac{1}{\gamma}\frac{d}{dt}\int_{\Omega}u^{\gamma}(t)+\frac{2(\gamma-1)}{\gamma^2}\int_{\Omega}  |\nabla u^{\frac{\gamma}{2}}(t)|^2 \nonumber\\
&\leq \epsilon\|\nabla u^{\frac{\gamma}{2}}(t)\|^2_{L^2}- \frac{a_{1,\inf}}{2}\int_{\Omega}u^{\gamma+1}(t+C(\epsilon,\gamma,\sup_t\|u(t)\|_{L^{q_0}},\sup_t\|\nabla v\|_{L^{2q_0}},a_0,a_1,a_2,|\Omega|).  \end{align*}
Taking $\epsilon=\frac{2(\gamma-1)}{\gamma^2}$ in this last equation, we get
\begin{align}\label{proof-global-step3-eq03}
& \frac{1}{\gamma}\frac{d}{dt}\int_{\Omega}u^{\gamma}(t) \nonumber\\
&\leq - \frac{a_{1,\inf}}{2}\int_{\Omega}u^{\gamma+1}(t)+C(\epsilon,\gamma,\sup_t\|u(t)\|_{L^{q_0}},\sup_t\|\nabla v\|_{L^{2q_0}},a_0,a_1,a_2,|\Omega|)\nonumber\\
&\leq - \frac{a_{1,\inf}}{2|\Omega|^{\frac{1}{\gamma}}}\Big(\int_{\Omega}u^\gamma(t)\Big)^{\frac{\gamma+1}{\gamma}}
+C(\epsilon,\gamma,\sup_t\|u(t)\|_{L^{q_0}},\sup_t\|\nabla v\|_{L^{2q_0}},a_0,a_1,a_2,|\Omega|).  \end{align}
\eqref{new-global-eq3} then  follows.

\smallskip

\noindent{\bf Step 4.} {\it  In this step, we prove that for any $q\ge 1$,  there exists  $C=C(q, u_0,v_0,a_0,a_1,a_2,|\Omega|)$  such that}
\begin{equation}
\label{new-global-step4-eq00}
 \|\nabla v(t)\|_q    \leq C   \quad \forall\,\, t \in (t_0, t_0+T_{\max}).
 \end{equation}

 By the arguments in Step 2, we have
\begin{align}\label{aux-new-eq2}
  \|\nabla v(t)\|_q  \leq &  \| \nabla e^{{(\frac{t-t_0}{\tau})}(\Delta-\lambda I)}v_0\|_q +\frac{\mu}{\tau}\int_{t_0}^{t}\|\nabla e^{{ (\frac{t-s}{\tau})}(\Delta-\lambda I)}u(s)\|_q \nonumber\\
    \leq & K_4(q) e^{-\lambda \big(\frac{t-t_0}{\tau}\big)}\|v_0\|_{W^{1,\infty}} \nonumber \\
   & + K_5(q,q)\int_{t_0}^{t}\big(1+(\frac{t-s}{\tau})^{-\frac{1}{2}}\big)e^{-\lambda(\frac{t-s}{\tau})}\|u(s)\|_{q}ds \nonumber \\
   \leq &  K_4(q) e^{-\lambda \big(\frac{t-t_0}{\tau}\big)}\|v_0\|_{W^{1,\infty}}\nonumber\\
   & + K_5(q,q) \tau \sup_{t\in [t_0,t_0+T_{\max})}\|u(t)\|_{L^q} \int_{0}^{\infty}\big(1+s^{-\frac{1}{2}}\big)e^{-\lambda s} ds
\end{align}
for each $ t\in (t_0,t_0+T_{\max})$. \eqref{new-global-step4-eq00} then follows.

\medskip

\noindent{\bf Step 5.} {\it Choose $p>n$ and $p_1>p>p_2$ such that $\frac{1}{p}=\frac{1}{p_1}+\frac{1}{p_2}$.
  In this sept, we prove that  there is $ C=C(u_0,v_0)$  such that
\begin{equation}
\label{new-global-eq4}
\|u(t)\|_{C^0(\bar\Omega)}+ \|v(t)\|_{C^0(\bar\Omega)} \leq C \quad \forall t \in [t_0, t_0+T_{\max}).
\end{equation}
Therefore, $T_{\max}=\infty$.}

First, by the variation of constant formula and the first equation in \eqref{u-v-eq00}, we have
\vspace{-0.1in}\begin{align*}
u(t)&=e^{{ -(t-t_0)A}}u_0-\chi\int_{t_0}^t e^{{ -(t-s)A}}\nabla(u (s)\cdot\nabla v(s))ds \\
&\,\,+\int_{t_0}^t e^{{ -(t-s)A}}u(s)\Big[\underbrace{1+a_0(s,\cdot)-a_1(s,\cdot)u(s)-(a_2(s,\cdot))_+\int_{\Omega}u(s) +(a_2(s,\cdot))_-\int_{\Omega}u(s)}_{I_0(\cdot,s)}\Big] ds,
\end{align*}
where $A=-\Delta+I$.
Note that
$$u(s)I_0(\cdot,s)\leq u(s)[\underbrace{1+(a_{2,\inf})_-M_0+a_0(\cdot,s)-a_1(s,\cdot)u(s)}_{I_1(\cdot,s)}].
 $$
By the comparison principle for parabolic equations, we get
$$\int_{t_0}^t e^{{ -(t-s)A}}u(s)I_0(\cdot,s)ds\leq \int_{t_0}^t e^{{-(t-s)A}}u(s)I_1(\cdot,s)ds.$$
 Therefore
$$u(t)\leq  u_1(t)+u_2(t)+u_3(t),$$
where
$$u_1(t)=e^{{ -(t-t_0)A}}u_0,\quad u_2(t)=-\chi\int_{t_0}^t e^{{ -(t-s)A}}\nabla(u (s)\cdot\nabla v(s))ds$$
 and
 $$u_3(t,x)=\int_{t_0}^t e^{{ -(t-s)A}}u(s)\left[1+(a_{2,\inf})_-M_0+a_0(\cdot,s)-a_1(s,\cdot)u(s) \right] ds.$$

 Next, note that there are $c_0,c_1>0$ such that $(1+(a_{2,\inf})_-M_0+a_0(t,x))r-a_1(t,x)r^2\le c_0-c_1 r^2$ for all $t\in\RR$, $x\in\Omega$, and $r\ge 0$.
We then  have that
\begin{equation}
\label{aux-new-eq3}
  \| u_1(t)\|_{L^\infty(\Omega)} \leq e^{-(t-t_0)} \|u_0\|_{L^{\infty}(\Omega)} \quad \forall \,\,  t\in [t_0,t_0+T_{\max})
  \end{equation}
and
\begin{equation}
\label{aux-new-eq4}
u_3(t) \leq  C \int_{t_0}^t  e^{{-(t-s)A}} ds \leq C \int_{t_0}^t  e^{-(t-s) }\leq C\quad \forall\,\, t\in [t_0,t_0+T_{\max}).
\end{equation}

Choose $p>n$ and $\alpha \in (\frac{n}{2p},\frac{1}{2}).$ Then $X^{\alpha} \subset L^{\infty}(\Omega)$ and the inclusion is continuous (see \cite{DH77} exercise 10, page 40.) Choose $\epsilon \in (0, \frac{1}{2}-\alpha)$.
By Lemma \ref{estimates-lm-00}(ii) and (iii), we have
\vspace{-0.1in}\begin{eqnarray*}
\|u_2(t)\|_{L^{\infty}(\Omega)}&\leq & C\|A^{\alpha}u_2(t)\|_{L^p(\Omega)} \nonumber\\
&\le & C|\chi| \int_{t_0}^t \| A^{\alpha}e^{{-\frac{t-s}{2}A}} e^{{ -\frac{t-s}{2}A}}\nabla(u(s) \cdot\nabla v(s))\|_{L^p(\Omega)}ds \nonumber \\
  &\leq& C|\chi| K_2(p,\alpha) K_3(p)\int_{t_0}^t\Big(1+(t-s)^{-\alpha-\frac{1}{2}}\Big)e^{-\frac{t-s}{2}} \| u(s) \cdot\nabla v(s)\||_{L^p(\Omega)}ds \nonumber\\
 &\leq&  C|\chi| K_2(p,\alpha)K_3(p)\int_{t_0}^t\Big(1+(t-s)^{-\alpha-\frac{1}{2}}\Big)e^{-\frac{t-s}{2}} \| u(s)\| _{L^{p_1}(\Omega)}  \|\nabla v(s)\|_{L^{p_2}(\Omega)}ds \nonumber
\end{eqnarray*}
for $t\in[t_0,t_0+T_{\max})$,
where $p_1>p $ and $\frac{1}{p}=\frac{1}{p_1}+\frac{1}{p_2}.$ By \eqref{new-global-eq3} and \eqref{new-global-step4-eq00}, we get
\begin{align}
\label{aux-new-eq5}
&\|u_2(t)\|_{L^{\infty}(\Omega)}\nonumber\\
 &\leq C(\sup_{t\in[t_0,t_0+T_{\max})}\|u(t)\|_{L^{p_1}(\Omega)},\sup_{t\in[t_0,t_0+T_{\max})}\|v(t)\|_{L^{p_2}(\Omega)})\int_{t_0}^{\infty} (t-s)^{-\alpha-\frac{1}{2}}e^{-\frac{t-s}{2} }ds.
\end{align}

Now from the second equation in \eqref{u-v-eq00} and the comparison principle for parabolic equations, we get
$$
\|v(t)\|_{C^0(\bar{\Omega})}\leq \max\{\|v_0\|_\infty, \sup_{t_0\le t<t_0+T_{\max}}\frac{\mu}{\lambda}\|u(t)\|_{C^0(\bar{\Omega})}\}
$$
\eqref{new-global-eq4} then   follows. This implies that $T_{\max}=\infty$.

\medskip

\noindent{\bf Step 6.} {\it In this step, we prove that \eqref{m1-eq} and \eqref{m2-eq} hold.}

\medskip

First, \eqref{m1-eq} follows from \eqref{L1-bound-eq00}. We then only need to prove \eqref{m2-eq}.

By the arguments in Step 1, we have for any $\gamma \in  \Big(1,\frac{|\chi |C^{\frac{1}{\gamma+1}}_{\gamma+1}{ \mu^\frac{1}{\gamma+1}}}{\big(|\chi | C^{\frac{1}{\gamma+1}}_{\gamma+1}-a_{1,\inf}\big)_+} \Big)$ and $t>t_0+t^1$ that
{
\begin{align*}
&\frac{1}{\gamma}\int_\Omega u^\gamma\nonumber\\
  & \leq\Big(A_\gamma r_0^{-\gamma}C_{\gamma+1}[\|v(\cdot,t_0+t^1)\|^{\gamma+1}_{L^{\gamma+1}}+\|\nabla v(\cdot,t_0+t^1)\|^{\gamma+1}_{L^{\gamma+1}}]+\frac{1}{\gamma}\|u(\cdot,t_0+t^1)\|_\infty\Big)e^{-(\gamma+1)(t-t_0-t^1)}\\
&\,\,\,\, +\underbrace{\frac{1}{\gamma+1}\big[\frac{\gamma+1}{\gamma}\epsilon\big]^{-\gamma}[a_{0,\sup}+\frac{\gamma+1}{\gamma}+(a_{2,\inf})_- M_1\Big]^{\gamma+1}|\Omega|}_{C_1(\epsilon,a_i,\gamma):=C_1},
\end{align*}
}
where $r_0=\big(\gamma A_\gamma C_{\gamma+1}\mu\big)^{\frac{1}{\gamma+1}}|\chi|$  (see \eqref{proof-global-2-eq09}).
Therefore,  there is $ t_{1}>t^1$ such for any $\gamma \in  \Big(1,\frac{|\chi |C^{\frac{1}{\gamma+1}}_{\gamma+1}{ \mu^\frac{1}{\gamma+1}}}{\big(|\chi | C^{\frac{1}{\gamma+1}}_{\gamma+1}-a_{1,\inf}\big)_+} \Big)$,
\begin{equation}
\label{largetime-bound-eq00}
\int_\Omega u^\gamma \leq \gamma[1+C_1]:=C_1(a_i,\gamma)\,\, \forall\, t>t_0+ t_1.
\end{equation}

Next, by the arguments in Step 2 (in particular, by  \eqref{new-global-step2-eq03b}),  there is $ t_{2}> t_{1}$ such for any $q \in \left[1,\frac{nq_0}{(n-q_0)_+}\right)$,
\begin{equation}
\label{largetime-bound-eq01}
  \|\nabla v(t)\|_q \leq 1 +C_2(a_i,\gamma,q)\,\, \forall\, t>t_0+ t_{2}.
\end{equation}

Now, by \eqref{largetime-bound-eq00}, \eqref{largetime-bound-eq01}, and the arguments in Step 3 (in particular,
\eqref{proof-global-step3-eq03}), there exists $ t_{3}> t_{2}$ such that any $\gamma>1,$  we have
\begin{equation}
\label{largetime-bound-eq03}
  \int_\Omega u^\gamma  \leq C_3(a_i,\gamma)\,\, \forall \, t>t_0+ t_{3}.
\end{equation}

 Finally, by  \eqref{largetime-bound-eq03} and the arguments in Step 4
 (in particular, \eqref{aux-new-eq2}), there exists $\exists t_{4}> t_{3}$ such that any $q>1,$  we get
\begin{equation}
\label{largetime-bound-eq04}
  \|\nabla v(t)\|_q \leq C_4(a_i,\gamma,q),\, \forall t>t_0+ t_{4}.
\end{equation}
\eqref{m2-eq} then follows from \eqref{largetime-bound-eq03} , \eqref{largetime-bound-eq04} and the proof of Step 5  (in particular, \eqref{aux-new-eq3}, \eqref{aux-new-eq4}, and \eqref{aux-new-eq5}).
\end{proof}

We now prove Theorem \ref{thm-global-000} under the assumption {\bf (H2)}.

\begin{proof}[Proof of Theorem \ref{thm-global-000} with the assumption {\bf (H2)}]
{ Assume that {\bf (H2)} holds.   Theorem \ref{thm-global-000}  can be proved by properly modifying the arguments in \cite[Lemma 3.1]{Win2014}. For the completeness, we also provide a proof in the following.}

First, we have
\begin{align*}
  \frac{1}{2}\frac{d}{dt}|\nabla v|^2=  \frac{1}{2}\big[\sum_{i=1}^{n}((v_{x_i})^2)_t\big] =  \sum_{i=1}^{n} v_{x_i} (v_{x_i})_t=  \sum_{i=1}^{n} v_{x_i} ( v_t)_{x_i}.
\end{align*}
From the second equation of \eqref{u-v-eq00}, we get
\begin{align*}
  \frac{1}{2}\frac{d}{dt}|\nabla v|^2 =& \sum_{i=1}^{n} v_{x_i} (\Delta v - \lambda v+ \mu u)_{x_i} \\
  = & \sum_{i=1}^{n} v_{x_i} (\Delta v_{x_i} - \lambda v_{x_i}+ \mu u_{x_i}) \\
  = &  \nabla v \cdot \nabla(\Delta v)-\lambda  |\nabla v|^2+\mu  \nabla v \cdot \nabla u.
\end{align*}
Combining this with $\nabla v \cdot \nabla(\Delta v)=\frac{1}{2}\Delta |\nabla v|^2-|D^2v|^2,$ we get
\begin{equation}\label{proof-global-eq00}
 {  \frac{1}{2\mu }\frac{d}{dt}|\nabla v|^2=\frac{1}{2\mu}\Delta |\nabla v|^2-\frac{1}{\mu}|D^2v|^2-\frac{ \lambda}{\mu} |\nabla v|^2+ \nabla v \cdot \nabla u.}
\end{equation}

Next, by multiplying the first equation of \eqref{u-v-eq00} by {  $\frac{1}{{ |\chi|}},$
 we get
\begin{equation}\label{proof-global-eq01}
\frac{1}{{|\chi|}}u_t=\frac{1}{{ |\chi|}}\Delta u-  \nabla u \cdot \nabla v-  u \Delta v+\frac{1 }{{ |\chi|}}u\Big(a_0(t,x)-a_1(t,x)u-a_2(t,x)\int_{\Omega}u\Big).
\end{equation}
By adding \eqref{proof-global-eq00} and \eqref{proof-global-eq01}, we get
\begin{align}\label{proof-global-eq02}
 \frac{d}{dt}\big[\frac{1 }{{ |\chi|}}u+\frac{1}{2\mu}|\nabla v|^2\big]  =& \Delta \big[\frac{1}{{ |\chi|}}u+\frac{1}{2\mu}|\nabla v|^2\big]-\frac{1}{\mu}|D^2v|^2-\frac{\lambda}{\mu}  |\nabla v|^2-  u \Delta v \nonumber\\
  +&\frac{1}{{|\chi|}}u\Big(a_0(t,x)-a_1(t,x)u-a_2(t,x)\int_{\Omega}u\Big).
\end{align}
By Young's inequality, we have $$| u \Delta v|\leq \frac{n\mu}{4}u^2+\frac{1}{\mu}|D^2v|^2.$$ By combining this  with \eqref{proof-global-eq02}, we get
\begin{align}\label{proof-global-eq03}
  \frac{d}{dt}\big[\frac{1}{{ |\chi|}}u+\frac{1}{2\mu}|\nabla v|^2\big] \leq  & \Delta \big[\frac{1}{{ |\chi|}}u+\frac{1}{2\mu}|\nabla v|^2\big]-\frac{\lambda}{\mu} |\nabla v|^2+\frac{n\mu}{4}u^2 \nonumber \\
  & +  \frac{1}{{ |\chi|}}u\Big(a_0(t,x)-a_1(t,x)u-a_2(t,x)\int_{\Omega}u\Big) \nonumber\\
  \leq & \Delta \big[\frac{1}{{ |\chi|}}u+\frac{1}{2\mu}|\nabla v|^2\big]-\frac{\lambda}{\mu} |\nabla v|^2-\frac{1}{{ |\chi|}}\big(a_{1,\inf}-\frac{ n\mu{ |\chi|}}{4}\big) u^2-\frac{2\lambda}{{ |\chi|}}u \nonumber\\
 & +   \frac{1}{{ |\chi|}}u\Big(a_{0,\sup}+2\lambda +\sup_{t \in \mathbb{R}}(a_{2,\inf}(t))_-\int_{\Omega}u\Big).
\end{align}
}
{Let $M_{0,a_i,\|u_0\|_\infty}=a_{0,\sup}+2\lambda +\sup_{t \in \mathbb{R}}(a_{2,\inf}(t))_-M_0(\|u_0\|_\infty).$  Then, by \eqref{L1-bound-eq000}, \eqref{proof-global-eq03} becomes for $t_0<t<t_0+T_{\max},$
\begin{align}\label{proof-global-eq04}
  \frac{d}{dt}\big[\frac{1}{{ |\chi|}}u+\frac{1}{2\mu}|\nabla v|^2\big] \leq &\Delta \big[\frac{1}{{ |\chi|}}u+\frac{1}{2\mu}|\nabla v|^2\big]-\frac{\lambda}{\mu} |\nabla v|^2-\frac{1}{{ |\chi|}}\big(a_{1,\inf}-\frac{ n\mu{ |\chi|}}{4\mu}\big) u^2\nonumber\\
  &-\frac{2\lambda}{{|\chi|}}u +\frac{1}{{ |\chi|}} M_{0,a_i,\|u_0\|_\infty}u \nonumber\\
  \leq  & \Delta \big[\frac{1}{{ |\chi|}}u+\frac{1}{2\mu}|\nabla v|^2\big]-2\lambda \big [\frac{1}{{ |\chi|}}u+\frac{1}{2\mu}|\nabla v|^2\big]\nonumber\\
   & -\frac{1}{{ |\chi|}}\big(a_{1,\inf}-\frac{ n\mu {|\chi|}}{4}\big)\big[u^2-\frac{ M_{0,a_i,\|u_0\|_\infty}}{a_{1,\inf}-\frac{ n\mu { |\chi|}}{4}}u\big]\nonumber\\
  = & \Delta \big[\frac{1}{{ |\chi|}}u+\frac{1}{2\mu}|\nabla v|^2\big]-2\lambda \big [\frac{1}{{ |\chi|}}u+\frac{1}{2\mu}|\nabla v|^2\big]\nonumber\\
  &-\frac{1}{{ |\chi|}}\big(a_{1,\inf}-\frac{ n { \mu|\chi|}}{4}\big)\big(u-\frac{ M_{0,a_i,\|u_0\|_\infty}}{2(a_{1,\inf}-\frac{ n\mu { |\chi|}}{4})}\big)^2\nonumber\\
  &+\frac{1}{{|\chi|}}\big(a_{1,\inf}-\frac{n { \mu |\chi|}}{4}\big)\frac{ M_{0,a_i,\|u_0\|_\infty}^2}{4(a_{1,\inf}-\frac{\mu n { |\chi|}}{4})^2}.
\end{align}
Thus since $\big(a_{1,\inf}-\frac{n\mu { |\chi|}}{4}\big)>0$, we get for $t_0<t<t_0+T_{\max},$
\begin{equation}\label{proof-global-eq05}
  \frac{d}{dt}\big[\frac{1}{{ |\chi|}}u+\frac{1}{2\mu}|\nabla v|^2\big] \leq \Delta \big[\frac{1}{{|\chi|}}u+\frac{1}{2\mu}|\nabla v|^2\big]-2\lambda \big [\frac{1}{{ |\chi|}}u+\frac{1}{2\mu}|\nabla v|^2\big]+\frac{ M_{0,a_i,\|u_0\|_\infty}^2}{4{ |\chi|}(a_{1,\inf}-\frac{n\mu { |\chi|}}{4})}.
\end{equation}
 Therefore since $\frac{\p v}{\p n}= 0$ and  $\Omega$ is convex, it follows from \cite[Lemma 3.2]{TaoWin12} that $\frac{\p |\nabla v|^2}{\p n}\leq 0.$ Thus $z=\frac{1}{{|\chi|}}u+\frac{1}{2\mu}|\nabla v|^2$ solve
\begin{equation}\label{proof-global-eq04}
  \begin{cases}
z_t \leq  \Delta z-z+\frac{ M_{0,a_i,\|u_0\|_\infty}^2}{4{ |\chi|}(a_{1,\inf}-\frac{n \nu{|\chi|}}{4})}\cr
\frac{\p z}{\p n}\leq 0.
\end{cases}
\end{equation}
By the comparison principle for parabolic equations, we get
\[0\leq z(\cdot,t)\leq \max\{z(\cdot,t_0),\frac{ M_{0,a_i,\|u_0\|_\infty}^2}{4{ |\chi|}(a_{1,\inf}-\frac{n\mu { |\chi|}}{4})}\}\,\,\,\, \forall \, t_0\leq t<t_0+T_{\max}.\]
Therefore, it follows by the blow-up criterion \eqref{local-existence-eq00} that $T_{max}=\infty$,
and \eqref{m2-eq} follows from the above arguments.}
\end{proof}

\section{Pointwise persistence}
\label{S:Persistence}
{  In this section, we investigate the pointwise persistence in \eqref{u-v-eq00} and prove Theorem \ref{thm-persistence-entire solution-000}.

 Throughout this section, we assume that {\bf (H1)} or {\bf (H2)} holds, and that $t^1(u_0,v_0)$, $t^2(u_0,v_0)$, $M_1$, and $M_2$ are
 as in Theorem \ref{thm-global-000}.
 We start by proving the following three important Lemmas.

\begin{lemma}
\label{persisetence-full-lemma-00} Let $p>1$ be given. There is $C_1(p)>0$ such that
for any  $t_0 \in \mathbb{R}$, $(u_0,v_0) \in C(\bar{\Omega})\times W^{1,\infty}(\Omega)$ with $u_0,v_0 \geq 0,$  and any
$\tilde t_0> t_0$,
there holds
\begin{align}
\label{new-new-eq1}
\|\nabla v(t)\|_{L^p(\Omega)}\le  C_1(p)\Big(\frac{(t-\tilde t_0)}{\tau}\Big)^{-\frac{1}{2}}e^{-\frac{\lambda(t-\tilde t_0)}{\tau}}\|v(\cdot,\tilde t_0)\|_{L^\infty(\Omega)} +  C_1(p)\sup_{\tilde t_0\le s\le t}\|u(\cdot,s)\|_\infty\quad \forall\, t\ge \tilde t_0.
\end{align}
\end{lemma}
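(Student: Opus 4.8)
The plan is to restart the $v$-equation at the later time $\tilde t_0$, represent $v$ by the variation of constants formula for the linear inhomogeneous heat equation (with the given $u$ as inhomogeneity), and then estimate $\nabla v$ by the gradient--semigroup bound Lemma \ref{estimates-lm-00}(vi), which trades one spatial derivative for an $s^{-1/2}$ singularity that is integrable against the exponential decay coming from $\lambda>0$.

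Concretely, first I would note that since $(u(\cdot,t;t_0,u_0,v_0),v(\cdot,t;t_0,u_0,v_0))$ is a classical solution and $\tilde t_0>t_0$, Lemma \ref{local-existence} gives $v(\cdot,\tilde t_0)\in C(\bar\Omega)\subset L^\infty(\Omega)$ and $u(\cdot,s)\in C(\bar\Omega)\subset L^\infty(\Omega)$ for all $s\ge\tilde t_0$. Rewriting the second equation of \eqref{u-v-eq00} as $v_t=\frac1\tau(\Delta-\lambda)v+\frac\mu\tau u$ and using uniqueness for this linear equation, $v$ coincides with its mild representation based at $\tilde t_0$:
\[
v(t)=e^{-\frac{\lambda(t-\tilde t_0)}{\tau}}e^{\frac{t-\tilde t_0}{\tau}\Delta}v(\cdot,\tilde t_0)+\frac\mu\tau\int_{\tilde t_0}^t e^{-\frac{\lambda(t-s)}{\tau}}e^{\frac{t-s}{\tau}\Delta}u(\cdot,s)\,ds,\qquad t\ge\tilde t_0.
\]
For $t=\tilde t_0$ the asserted inequality is trivial because its right-hand side is $+\infty$, so one may assume $t>\tilde t_0$. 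Applying $\nabla$ and then Lemma \ref{estimates-lm-00}(vi) termwise, the first summand is controlled by $K_6(p)\big(\tfrac{t-\tilde t_0}{\tau}\big)^{-1/2}e^{-\lambda(t-\tilde t_0)/\tau}\|v(\cdot,\tilde t_0)\|_{L^\infty(\Omega)}$; for the integral summand one bounds $\|u(\cdot,s)\|_{L^\infty}$ by $\sup_{\tilde t_0\le s\le t}\|u(\cdot,s)\|_\infty$ and substitutes $r=(t-s)/\tau$, giving
\[
\frac\mu\tau K_6(p)\int_{\tilde t_0}^t\Big(\tfrac{t-s}{\tau}\Big)^{-1/2}e^{-\frac{\lambda(t-s)}{\tau}}\,ds=\mu K_6(p)\int_0^{(t-\tilde t_0)/\tau}r^{-1/2}e^{-\lambda r}\,dr\le \mu K_6(p)\sqrt{\pi/\lambda}.
\]
Adding the two contributions and setting $C_1(p)=\max\{K_6(p),\ \mu K_6(p)\sqrt{\pi/\lambda}\}$ yields \eqref{new-new-eq1}; the key point is that $\int_0^\infty r^{-1/2}e^{-\lambda r}\,dr<\infty$, so $C_1(p)$ depends only on $p$ (through $K_6(p)$) and on the fixed parameters $\mu,\lambda,\tau$, and in particular not on $t_0,\tilde t_0,t,u_0,v_0$.

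I do not expect a genuine obstacle here: the only two points deserving a line of justification are that the variation of constants formula with base point $\tilde t_0$ is legitimate (uniqueness for the linear inhomogeneous heat equation applied to the classical solution $v$ with known right-hand side $\mu u$), and that moving $\nabla$ inside the time integral and applying Minkowski's integral inequality is permissible because of the smoothness of $v$ on $(\tilde t_0,t)$ and the integrability of $r^{-1/2}$ near $0$. The substantive work of the section is in the subsequent lemmas bounding $\|v(\cdot,t)\|_{W^{2,\infty}(\Omega)}$ by $\|u(\cdot,t)\|_\infty$, for which this estimate is merely a first ingredient.
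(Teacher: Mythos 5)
Your proof is correct and follows essentially the same route as the paper's: write $v$ via the variation of constants formula based at $\tilde t_0$, split off the $\nabla$, and apply Lemma \ref{estimates-lm-00}(vi) to each term, with the $r^{-1/2}e^{-\lambda r}$ integral giving a finite constant independent of the data. The only cosmetic difference is that you make explicit that the case $t=\tilde t_0$ is vacuous and record the Gamma-function value of the integral.
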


\begin{proof}
 By the second equation in \eqref{u-v-eq00} and the variation of constant formula, we have for  $t> {\tilde t_0}$ that
$$v(\cdot,t)=e^{\frac{1}{\tau}{ (t-\tilde t_0)(\Delta-\lambda I)}}v(\cdot,\tilde t_0)+\frac{\mu}{\tau}\int_{\tilde t_0}^te^{\frac{1}{\tau}{ (t-s)(\Delta-\lambda I)}}u(\cdot,s)ds.$$
Thus for $p>1,$ we have
\begin{equation}
\label{aaux-new-eq0}
\|\nabla v\|_{L^p(\Omega)}\leq \underbrace{\|\nabla e^{\frac{1}{\tau}{(t-\tilde t_0)(\Delta-\lambda I)}}v(\cdot,\tilde t_0)\|_{L^p(\Omega)}}_{I_1}+\underbrace{\frac{\mu}{\tau}\int_{\tilde t_0}^t \|\nabla e^{\frac{1}{\tau}{(t-s)(\Delta- \lambda I)}}u(\cdot,s)\|_{L^p(\Omega)}ds }_{I_2}.
\end{equation}
Then by Lemma \ref{estimates-lm-00} (vi),
\begin{align}
\label{aaux-new-eq1}
I_1&=e^{-\frac{\lambda (t-\tilde t_0)}{\tau}}\|\nabla e^{{(\frac{t-\tilde t_0}{\tau})}\Delta}v(\cdot,\tilde t_0)\|_{L^p(\Omega)}\nonumber\\
&\leq K_6(p)( \frac{t-\tilde t_0}{\tau})^{-\frac{1}{2}}e^{-\frac{\lambda(t-\tilde t_0)}{\tau}}\|v(\cdot,\tilde t_0)\|_{L^\infty(\Omega)}\quad \forall\, t>\tilde t_0.
\end{align}
By Lemma \ref{estimates-lm-00}(vi) again, we have for $t>\tilde t_0$ that
\begin{align}
\label{aaux-new-eq2}
I_2 &\leq  \frac{\mu}{\tau}K_6(p)\sup_{\tilde  t_0\le s\le t}\|u(\cdot,s)\|_\infty \int_{\tilde t_0}^t (\frac{t-s}{\tau})^{-\frac{1}{2}}e^{-\lambda (\frac{t-s}{\tau})}ds \nonumber\\
      & \leq   \frac{\mu}{\tau} K_6(p)\sup_{\tilde t_0\le s\le t}\|u(\cdot,s)\|_\infty \tau \int_{0}^\infty s^{-\frac{1}{2}}e^{-\lambda s}ds.
\end{align}
The lemma then follows from
 \eqref{aaux-new-eq0}-\eqref{aaux-new-eq2}.
\end{proof}

\begin{corollary}
\label{cor1}
There is $\tilde C_1(p)$ such that for any for any  $t_0 \in \mathbb{R}$, and $(u_0,v_0) \in C(\bar{\Omega})\times W^{1,\infty}(\Omega)$ with $u_0,v_0 \geq 0,$  there is $t^3=t^3(u_0,v_0)>t^2(u_0,v_0)$ satisfying that
\begin{equation}
\label{new-new-eq1-1}
\|\nabla v(t)\|_{L^p}\le\tilde C_1(p) \quad \forall\, t\ge t_0+t^3.
\end{equation}
\end{corollary}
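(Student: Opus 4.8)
The plan is to obtain Corollary \ref{cor1} as an immediate consequence of Lemma \ref{persisetence-full-lemma-00} combined with the eventual $L^\infty$ bound on $u$ furnished by Theorem \ref{thm-global-000}. Fix $p>1$, let $t_0\in\mathbb{R}$ and $(u_0,v_0)\in C(\bar\Omega)\times W^{1,\infty}(\Omega)$ with $u_0,v_0\ge0$, and apply Lemma \ref{persisetence-full-lemma-00} with the specific choice $\tilde t_0:=t_0+t^2(u_0,v_0)$, where $t^2(u_0,v_0)$ is as in Theorem \ref{thm-global-000} (note $\tilde t_0>t_0$ since $t^2>0$). This gives, for every $t\ge\tilde t_0$,
\[
\|\nabla v(t)\|_{L^p(\Omega)}\le C_1(p)\Big(\frac{t-\tilde t_0}{\tau}\Big)^{-\frac12}e^{-\frac{\lambda(t-\tilde t_0)}{\tau}}\|v(\cdot,\tilde t_0)\|_{L^\infty(\Omega)}+C_1(p)\sup_{\tilde t_0\le s\le t}\|u(\cdot,s)\|_\infty .
\]

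Next I would bound the two terms separately. Because $\tilde t_0=t_0+t^2(u_0,v_0)$, estimate \eqref{m2-eq} of Theorem \ref{thm-global-000} yields $\|u(\cdot,s)\|_\infty\le M_2$ for all $s\ge\tilde t_0$, so the second term is at most $C_1(p)M_2$, a quantity independent of $t_0,u_0,v_0$. For the first term, I would use that $\|v(\cdot,\tilde t_0)\|_{L^\infty(\Omega)}$ is a finite number, since by Theorem \ref{thm-global-000} the solution is globally bounded, while the prefactor $\big(\frac{t-\tilde t_0}{\tau}\big)^{-1/2}e^{-\lambda(t-\tilde t_0)/\tau}$ tends to $0$ as $t\to\infty$; hence there is $\sigma=\sigma(u_0,v_0)>0$ with the property that this first term is $\le C_1(p)$ whenever $t-\tilde t_0\ge\sigma$.

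Finally, I would set $t^3(u_0,v_0):=t^2(u_0,v_0)+\sigma(u_0,v_0)>t^2(u_0,v_0)$ and $\tilde C_1(p):=C_1(p)(1+M_2)$; adding the two bounds gives $\|\nabla v(t)\|_{L^p(\Omega)}\le\tilde C_1(p)$ for all $t\ge t_0+t^3$. I do not expect any real obstacle here — this is a routine decay argument — the only point requiring attention is to confine the initial-data dependence entirely to the time threshold $t^3$ (via the decaying exponential prefactor), so that the resulting constant $\tilde C_1(p)$ remains uniform in $t_0,u_0,v_0$.
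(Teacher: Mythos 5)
Your argument is exactly the paper's proof, merely written out in more detail: the paper also sets $\tilde t_0=t_0+t^2(u_0,v_0)$, invokes \eqref{m2-eq} to bound the supremum term by $M_2$, and lets the decaying prefactor handle the initial-data term, yielding $\tilde C_1(p)=C_1(p)(1+M_2)$. Correct, and no essential difference in approach.
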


\begin{proof}
Choose $\tilde t_0={ t_0}+t^2(u_0,v_0)$ in Lemma \ref{persisetence-full-lemma-00}. By Theorem \ref{thm-global-000},
$$
\|u(\cdot,s)\|_\infty\le M_2\quad \forall\,\, s\ge \tilde t_0.
$$
This together with \eqref{new-new-eq1} implies that there is $t^3(u_0,v_0)$ such that
$$
\|\nabla v\|_{L^p(\Omega)}\le C_1(p)\big(1+M_2) \quad \forall\, t\ge t_0+t^3.
$$
The corollary then follows with $\tilde C_1(p)=C_1(p)\big(1+M_2\big)$.
\end{proof}

\begin{lemma}
\label{persisetence-full-lemma-01}
 Fix $0<\eta<\frac{1}{2}$ and $p>1$. Let $A=-\Delta +\alpha I$ for some $\alpha\in (0,1)$ with $D(A)=\{u \in  W^{2,p}(\Omega) : \frac{\p u}{\p n}=0 \}$.
  There is $C_2(p,\eta)>0$ such that  for any  $t_0 \in \mathbb{R}$, $(u_0,v_0) \in C(\bar{\Omega})\times W^{1,\infty}(\Omega)$ with $u_0,v_0 \geq 0,$ and any $\tilde t_0>t_0$, there holds
 \begin{align}
 \label{new-new-eq2}
& \|A^\eta u(\cdot,t)\|_{L^p(\Omega)}\nonumber\\
&\le C_2(p,\eta) (t- \tilde t_0)^{-\eta}e^{-(1-\alpha )(t-\tilde t_0)}\|u(\cdot,\tilde t_0)\|_p\nonumber \\
 &\,\,\,\,
 +C_2(p,\eta)\sup_{\tilde t_0\le s\le t}\|u(\cdot,s)\|_\infty\big(1+\sup_{\tilde t_0\le s\le t}\|\nabla v(\cdot,s)\|_{L^p(\Omega)}+\sup_{\tilde t_0\le s\le t}\|u(\cdot,s)\|_\infty\big)
 \end{align}
 for all $t{ > }\tilde t_0$.
\end{lemma}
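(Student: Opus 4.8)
The plan is to base the estimate on a variation of constants formula for $u$ in which the principal linear operator is taken to be $\Delta-I$ (so that its semigroup carries the explicit exponential decay that appears in \eqref{new-new-eq2}), and then to control the transport and reaction terms by the fractional‑power smoothing estimates of Lemma \ref{estimates-lm-00}(ii)--(iii) together with the global bounds of Theorem \ref{thm-global-000}. Since {\bf (H1)} or {\bf (H2)} holds, $T_{\max}=\infty$, so the solution is defined for all $t\ge t_0$ and all the manipulations below are legitimate.

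First I would rewrite the first equation of \eqref{u-v-eq00} as $u_t=(\Delta-I)u+g$, where
\[
g(s)=-\chi\nabla\cdot\big(u(\cdot,s)\nabla v(\cdot,s)\big)+u(\cdot,s)\Big(1+a_0(s,\cdot)-a_1(s,\cdot)u(\cdot,s)-a_2(s,\cdot)\int_\Omega u(\cdot,s)\Big),
\]
so that for $\tilde t_0>t_0$ and $t>\tilde t_0$ Duhamel's formula gives
\[
u(\cdot,t)=e^{(t-\tilde t_0)(\Delta-I)}u(\cdot,\tilde t_0)+\int_{\tilde t_0}^t e^{(t-s)(\Delta-I)}g(s)\,ds .
\]
Because $A=-\Delta+\alpha I$ with $\alpha\in(0,1)$, one has the operator identity $e^{\sigma(\Delta-I)}=e^{-(1-\alpha)\sigma}e^{-\sigma A}$ for $\sigma>0$. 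Applying $A^\eta$ and taking $L^p$ norms, the contribution of the first term is at most $K_2(p,\eta)(t-\tilde t_0)^{-\eta}e^{-(1-\alpha)(t-\tilde t_0)}\|u(\cdot,\tilde t_0)\|_p$ by \eqref{eq-estimate-01}, which is precisely the first term on the right of \eqref{new-new-eq2}.

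For the Duhamel integral I would split $g=g_1+g_2$ with $g_1=-\chi\nabla\cdot(u\nabla v)$ and $g_2$ the reaction part. For $g_1$, factor $e^{-(t-s)A}=e^{-\frac{t-s}{2}A}e^{-\frac{t-s}{2}A}$, use $\|A^\eta e^{-\frac{t-s}{2}A}\phi\|_p\le K_2(p,\eta)(\tfrac{t-s}{2})^{-\eta}\|\phi\|_p$ from \eqref{eq-estimate-01}, and then $\|e^{-\frac{t-s}{2}A}\nabla\cdot\psi\|_p\le e^{-\alpha\frac{t-s}{2}}K_3(p)(1+(\tfrac{t-s}{2})^{-1/2})\|\psi\|_p$, which follows from \eqref{aux-new-eq1} and applies to $\psi=u\nabla v$ since $\tfrac{\p v}{\p n}=0$ forces $(u\nabla v)\cdot\nu=0$ on $\p\Omega$; combined with $\|u\nabla v\|_p\le\|u\|_\infty\|\nabla v\|_p$ this bounds the $g_1$‑contribution by a constant times $\sup_{\tilde t_0\le s\le t}\|u(\cdot,s)\|_\infty\cdot\sup_{\tilde t_0\le s\le t}\|\nabla v(\cdot,s)\|_p$, the remaining time integral $\int_0^\infty\sigma^{-\eta}(1+\sigma^{-1/2})e^{-c\sigma}\,d\sigma$ being finite exactly because $\eta<\tfrac12$. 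For $g_2$, the pointwise bound $|1+a_0-a_1u-a_2\int_\Omega u|\le C(1+\|u\|_\infty)$, using $\int_\Omega u\le|\Omega|\|u\|_\infty$ and the coefficient bounds \eqref{a-i-sup-inf-eq1}, gives $\|g_2(s)\|_p\le C\|u(\cdot,s)\|_\infty(1+\|u(\cdot,s)\|_\infty)$, so \eqref{eq-estimate-01} with the integrable weight $\sigma^{-\eta}e^{-(1-\alpha)\sigma}$ ($\eta<1$) bounds the $g_2$‑contribution by a constant times $\sup_{\tilde t_0\le s\le t}\|u(\cdot,s)\|_\infty(1+\sup_{\tilde t_0\le s\le t}\|u(\cdot,s)\|_\infty)$. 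Taking $C_2(p,\eta)$ to be the largest of the constants produced and collecting the three estimates yields \eqref{new-new-eq2}.

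The only genuinely delicate point is the chemotaxis term $g_1$: the factor $\nabla\cdot$ costs an extra half‑derivative which must be absorbed into the fractional‑power smoothing of $A^\eta$ while keeping the $s$‑integral convergent near $s=t$; this forces the total singularity exponent $\eta+\tfrac12$ to stay below $1$, i.e. exactly the hypothesis $\eta<\tfrac12$. Everything else is a routine assembly of the semigroup estimates in Lemma \ref{estimates-lm-00}.
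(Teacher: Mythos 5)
Your proposal is correct and follows essentially the same route as the paper's proof: the same Duhamel representation with principal part $\Delta-I$, the same commuting of the exponential decay $e^{-(1-\alpha)\sigma}$ through $e^{-\sigma A}$, the same half‑time splitting of the semigroup to pair the $A^\eta$‑smoothing \eqref{eq-estimate-01} with the $\nabla\cdot$‑smoothing \eqref{aux-new-eq1}, and the same crude $L^\infty$‑control of the reaction term. The only difference from the paper is purely notational (you write $e^{-(t-s)A}=e^{-\frac{t-s}{2}A}e^{-\frac{t-s}{2}A}$ and expand the second factor, while the paper factors $e^{(t-s)(\Delta-I)}=e^{\frac{t-s}{2}(\Delta-\alpha I)}\,e^{-(1-\frac{\alpha}{2})(t-s)}\,e^{\frac{t-s}{2}\Delta}$; these are identical identities).
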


\begin{proof}
    By the first equation in \eqref{u-v-eq00} and  the variation of constant formula, we have for $t> \tilde t_0$ that
\begin{align*}
u(\cdot,t)&=\underbrace{e^{{(t- \tilde t_0)}(\Delta-I)}u(\cdot,\tilde t_0)}_{I_1}-\chi \underbrace{\int_{\tilde t_0}^t e^{{(t-s)(\Delta-I)}}\nabla\cdot (u(\cdot,s) \nabla v(\cdot,s))ds}_{I_2} \nonumber \\
&+\underbrace{\int_{\tilde t_0}^t e^{{(t-s)(\Delta-I)}}u(\cdot,s)\Big(a_0(s,x)+1-a_1(t,x)u(\cdot,s)-a_2(s,x)\int_{\Omega}u(\cdot,s)\Big)ds}_{I_3}.
\end{align*}
Thus
\begin{equation}
\label{step2-eq00}
\|A^\eta u(\cdot,t)\|_p \leq \|A^\eta I_1\|_p+\chi\|A^\eta I_2\|_p  +\|A^\eta I_3\|_p.
\end{equation}

 We first estimate $\|A^\eta I_1\|_p$.
Note that
\begin{align*}
 \|A^\eta I_1\|_p&=\|A^\eta e^{(\Delta-I)(t- \tilde t_0)}u(\cdot,\tilde t_0)\|_p\\
                &=\|A^\eta e^{(\Delta-\alpha I)(t- \tilde t_0)}e^{-(1-\alpha)(t- \tilde t_0)}u(\cdot,\tilde t_0)\|_p \\
&=e^{-(1-\alpha )(t- \tilde t_0)}\|A^\eta e^{(\Delta-\alpha I)(t- \tilde t_0)}u(\cdot,\tilde t_0)\|_p.
\end{align*}
Then by  Lemma \ref{estimates-lm-00}(ii),
\begin{equation}
\label{step2-eq01}
\|A^\eta I_1\|_p\leq  K_2(p,\eta) (t- \tilde t_0)^{-\eta}e^{-(1-\alpha )(t- \tilde t_0)}\|u(\cdot,\tilde t_0)\|_p.
\end{equation}

Next, we estimate $\|A^\eta I_2\|_p$. Note that
\begin{align*}
 \|A^\eta I_2\|_p& \leq \int_{\tilde t_0}^t  \|A^\eta e^{{ (t-s)(\Delta-I)}}\nabla\cdot (u(\cdot,s) \nabla v(\cdot,s))\|_pds\\
                &= \int_{\tilde t_0}^t  \|A^\eta  e^{(\Delta-\alpha I)\frac{(t-s)}{2}} \big(e^{-(1-\frac{\alpha}{2} )(t- s)}e^{{\frac{(t-s)}{2}}\Delta}\nabla\cdot (u(\cdot,s) \nabla v(\cdot,s))\big)\|_pds.
\end{align*}
Thus by Lemma \ref{estimates-lm-00}(ii) again, we have
$$
 \|A^\eta I_2\|_p  \leq K_2(p,\eta)  \int_{\tilde t_0}^t \big(\frac{t-s}{2}\big)^{-\eta} e^{-(1-\frac{\alpha}{2} )(t- s)} \|e^{{ \frac{(t-s)}{2}}\Delta}\nabla\cdot (u(\cdot,s) \nabla v(\cdot,s))\big)\|_pds.
$$
By  Lemma \ref{estimates-lm-00} (iii), we have
\begin{align}
\label{step2-eq02}
 &\|A^\eta I_2\|_p\nonumber\\
  & \leq { K_{2,3}(p,\eta)}   \int_{\tilde t_0}^t \big(\frac{t-s}{2}\big)^{-\eta}\big(1+ \big(\frac{t-s}{2}\big)^{-\frac{1}{2}} \big) e^{-(1-\frac{\alpha}{2} )(t- s)} \| u(\cdot,s) \nabla v(\cdot,s))\|_pds\nonumber\\
 & \leq K_{2,3}(p,\eta)   \int_{\tilde t_0}^t \big(\frac{t-s}{2}\big)^{-\eta}\big(1+ \big(\frac{t-s}{2}\big)^{-\frac{1}{2}} \big) e^{-(1-\frac{\alpha}{2} )(t- s)} \| u(\cdot,s)\|_\infty \| \nabla v(\cdot,s))\|_pds\nonumber \\
 & \le  K_{2,3}(p,\eta)  { \sup_{\tilde t_0\le s\le t}\big(\|u(\cdot,s)\|_\infty \|\nabla v(\cdot,s)\|_p\big)} \int_{\tilde t_0}^t \big(\frac{t-s}{2}\big)^{-\eta}\big(1+ \big(\frac{t-s}{2}\big)^{-\frac{1}{2}} \big) e^{-(1-\frac{\alpha}{2} )(t- s)}ds,
\end{align}
{ where $K_{2,3}(p,\eta)=K_2(p,\eta) K_3(p)$}. 
Note the last integral in \eqref{step2-eq02} is finite because $\eta<\frac{1}{2}.$

Third, we have
\begin{align*}
&\|A^\eta I_3\|_p\\
 &\leq \int_{\tilde t_0}^t  \|A^\eta e^{{ (t-s)(\Delta-I)}}u(\cdot,s)\Big(a_0(s,\cdot)+1-a_1(t,\cdot)u(\cdot,s)-a_2(s,\cdot)\int_{\Omega}u(\cdot,s)\Big)\|_pds\\
&=\int_{\tilde t_0}^t  \|A^\eta e^{{(t-s)(\Delta-\alpha I)}} e^{-(1-\alpha )(t- s)}u(\cdot,s)\Big(a_0(s,\cdot)+1-a_1(t,\cdot)u(\cdot,s)-a_2(s,\cdot)\int_{\Omega}u(\cdot,s)\Big)\|_pds.
\end{align*}
By Lemma \ref{estimates-lm-00}(ii), we have that
\begin{align}
\label{step2-eq03}
&\|A^\eta I_3\|_p \nonumber\\
&\leq  K_2(p,\eta)|\Omega|^\frac{1}{p}\sup_{\tilde t_0\le s\le t}\|u(\cdot,s)\|_\infty \Big(a_{0,\sup}+1+[\sup|a_2||\Omega|+a_{1,\sup}]
\sup_{\tilde t_0\le s\le t}\|u(\cdot,s)\|_\infty \Big)\nonumber\\
&\,\, \cdot \int_0^\infty  s^{-\eta} e^{-(1-\alpha )s}ds.
\end{align}
\eqref{new-new-eq2} then follows from \eqref{step2-eq01}-\eqref{step2-eq03}.
\end{proof}

\begin{corollary}
\label{cor2}
There is $\tilde C_2(p,\eta)$ such that for any  $t_0 \in \mathbb{R}$, and $(u_0,v_0) \in C(\bar{\Omega})\times W^{1,\infty}(\Omega)$ with $u_0,v_0 \geq 0,$ there is $t^4=t^4(u_0,v_0)\ge t^3(u_0,v_0)$ satisfying that
\begin{equation}
\label{new-new-eq2-1}
\|A^\eta u(\cdot,t)\|_p\le \tilde C_2(p,\eta)\quad \forall\, t\ge t_0+ t^4.
\end{equation}
\end{corollary}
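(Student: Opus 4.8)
The plan is to deduce Corollary \ref{cor2} from Lemma \ref{persisetence-full-lemma-01} in exactly the same way Corollary \ref{cor1} was deduced from Lemma \ref{persisetence-full-lemma-00}: apply the lemma with the shifted starting time $\tilde t_0 = t_0 + t^3(u_0,v_0)$, and then use the already-established uniform large-time bounds on $\|u(\cdot,s)\|_\infty$ and $\|\nabla v(\cdot,s)\|_{L^p(\Omega)}$ to absorb all dependence on the initial data into a time lag. First I would record two facts valid for all $s \ge \tilde t_0 := t_0 + t^3(u_0,v_0)$: since $t^3(u_0,v_0) > t^2(u_0,v_0)$, Theorem \ref{thm-global-000} gives $\|u(\cdot,s)\|_\infty \le M_2$, and Corollary \ref{cor1} gives $\|\nabla v(\cdot,s)\|_{L^p(\Omega)} \le \tilde C_1(p)$. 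Consequently, for every $t \ge \tilde t_0$,
\[
\sup_{\tilde t_0\le s\le t}\|u(\cdot,s)\|_\infty \le M_2, \qquad \sup_{\tilde t_0\le s\le t}\|\nabla v(\cdot,s)\|_{L^p(\Omega)} \le \tilde C_1(p),
\]
and moreover $\|u(\cdot,\tilde t_0)\|_p \le |\Omega|^{1/p}\|u(\cdot,\tilde t_0)\|_\infty \le |\Omega|^{1/p} M_2$, again because $\tilde t_0 \ge t_0 + t^2(u_0,v_0)$.

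Next I would insert these bounds into \eqref{new-new-eq2}: for all $t > \tilde t_0$,
\begin{align*}
\|A^\eta u(\cdot,t)\|_{L^p(\Omega)} &\le C_2(p,\eta)|\Omega|^{1/p}M_2\,(t-\tilde t_0)^{-\eta}e^{-(1-\alpha)(t-\tilde t_0)}\\
&\quad + C_2(p,\eta)M_2\big(1+\tilde C_1(p)+M_2\big).
\end{align*}
The second summand is already a constant depending only on $p$ and $\eta$ (through $C_2$, $M_2$, $\tilde C_1(p)$), independent of $t_0,u_0,v_0$. For the first summand, the function $r \mapsto r^{-\eta}e^{-(1-\alpha)r}$ decays to $0$ as $r \to \infty$, so one may fix a universal $\sigma = \sigma(p,\eta,\alpha)$, past the maximum of this function and large enough that $C_2(p,\eta)|\Omega|^{1/p}M_2\,\sigma^{-\eta}e^{-(1-\alpha)\sigma} \le 1$. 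Setting $t^4(u_0,v_0) := t^3(u_0,v_0) + \sigma \ge t^3(u_0,v_0)$, the first summand is $\le 1$ for all $t \ge t_0 + t^4(u_0,v_0)$, and defining $\tilde C_2(p,\eta) := 1 + C_2(p,\eta)M_2\big(1+\tilde C_1(p)+M_2\big)$ yields \eqref{new-new-eq2-1}.

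All ingredients — Lemma \ref{persisetence-full-lemma-01}, the two previous results Theorem \ref{thm-global-000} and Corollary \ref{cor1}, and the elementary decay of $r^{-\eta}e^{-(1-\alpha)r}$ — are already in hand, so there is no genuine obstacle here; the only point requiring care is the bookkeeping of the increasing waiting times $t^2(u_0,v_0) < t^3(u_0,v_0) < t^4(u_0,v_0)$ and the verification that $\tilde C_2(p,\eta)$ is truly independent of $t_0,u_0,v_0$, which holds precisely because $M_2$ and $\tilde C_1(p)$ are themselves uniform in the initial data. (Note that $t^4-t^3$ can in fact be taken to be the universal constant $\sigma$, so the entire dependence on $u_0,v_0$ sits in the already-present $t^3(u_0,v_0)$.)
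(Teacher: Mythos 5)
Your proof is correct and follows the paper's own route: apply Lemma \ref{persisetence-full-lemma-01} with $\tilde t_0 = t_0 + t^3(u_0,v_0)$, feed in the uniform large-time bounds from \eqref{m2-eq} and \eqref{new-new-eq1-1}, and absorb the exponentially decaying first term by waiting an extra universal amount of time. (A small cosmetic point: $r\mapsto r^{-\eta}e^{-(1-\alpha)r}$ is in fact strictly decreasing on $(0,\infty)$, so there is no interior maximum to pass, but this does not affect the argument.)
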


\begin{proof}
It follows from \eqref{m2-eq}, \eqref{new-new-eq1-1}, and \eqref{new-new-eq2}.
\end{proof}

\begin{lemma}
\label{persisetence-full-lemma-03}
Fix $0<\eta<\frac{1}{2}$ and $0<\alpha<\min\{1,\lambda\}$.  Choose $\theta>0$ and $p>n$ such that $2\theta-\frac{n}{p}>2,$ and $\theta<1+\eta$.
 Let $A=-\Delta +\alpha I$ with $D(A)=\{u \in  W^{2,p}(\Omega) : \frac{\p u}{\p n}=0 \}$.
There is $C_3(p,\theta,\eta)$ such that for any  $t_0 \in \mathbb{R}$, $(u_0,v_0) \in C(\bar{\Omega})\times W^{1,\infty}(\Omega)$ with $u_0,v_0 \geq 0,$  and any $\tilde t_0>t_0$, there holds
\begin{align}
\label{new-new-eq3}
\|v(\cdot,t)\|_{W^{2,\infty}(\Omega)}&\le  C_3(p,\theta,\eta)(\frac{t-\tilde t_0}{\tau})^{-\theta}e^{-(\lambda-\alpha)(\frac{t-\tilde t_0}{\tau})}\|v(\cdot,\tilde t_0)\|_{L^\infty(\Omega)}\nonumber\\
&\,\, +C_3(p,\theta,\eta)\sup_{\tilde t_0\le s\le t} \|A^\eta u(\cdot,s)\|_{L^p(\Omega)}\quad \forall\, t\ge \tilde t_0.
\end{align}
\end{lemma}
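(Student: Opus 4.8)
The plan is to run a semigroup / variation-of-constants argument for the $v$-equation, in the spirit of the proofs of Lemmas~\ref{persisetence-full-lemma-00} and~\ref{persisetence-full-lemma-01}, but now estimating $v$ in the graph norm of $A^\theta$ and feeding in the source $u$ through the graph norm of $A^\eta$. Note first that the hypotheses force $\theta>1+\frac{n}{2p}>1$, so, since $\eta<\frac{1}{2}$, we have $0<\theta-\eta<1$. Because $2\theta-\frac{n}{p}>2$, Lemma~\ref{estimates-lm-00}(ii) gives the continuous embedding $D(A^\theta)\hookrightarrow W^{2,\infty}(\Omega)$, and since $A$ is invertible this yields $\|w\|_{W^{2,\infty}(\Omega)}\le C\,\|A^\theta w\|_{L^p(\Omega)}$ for $w\in D(A^\theta)$; hence it suffices to bound $\|A^\theta v(\cdot,t)\|_{L^p(\Omega)}$.

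First I would write, from the second equation of \eqref{u-v-eq00} and the variation-of-constants formula (recall $v$ exists globally by Theorem~\ref{thm-global-000}),
\[
v(\cdot,t)=e^{\frac{t-\tilde t_0}{\tau}(\Delta-\lambda I)}v(\cdot,\tilde t_0)+\frac{\mu}{\tau}\int_{\tilde t_0}^t e^{\frac{t-s}{\tau}(\Delta-\lambda I)}u(\cdot,s)\,ds,
\]
and then use the factorization $e^{\frac{\sigma}{\tau}(\Delta-\lambda I)}=e^{-\frac{(\lambda-\alpha)\sigma}{\tau}}e^{-\frac{\sigma}{\tau}A}$, together with $A^\theta=A^{\theta-\eta}A^\eta$ inside the integral, to obtain
\[
A^\theta v(\cdot,t)=e^{-\frac{(\lambda-\alpha)(t-\tilde t_0)}{\tau}}A^\theta e^{-\frac{t-\tilde t_0}{\tau}A}v(\cdot,\tilde t_0)+\frac{\mu}{\tau}\int_{\tilde t_0}^t e^{-\frac{(\lambda-\alpha)(t-s)}{\tau}}A^{\theta-\eta}e^{-\frac{t-s}{\tau}A}\big(A^\eta u(\cdot,s)\big)\,ds .
\]
Here $u(\cdot,s)\in D(A)\subset D(A^\eta)$ for each $s\in(\tilde t_0,t]$ by the regularity recorded in Lemma~\ref{local-existence}, so the integrand is meaningful.

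For the boundary term, \eqref{eq-estimate-01} gives $\|A^\theta e^{-\frac{t-\tilde t_0}{\tau}A}v(\cdot,\tilde t_0)\|_{L^p}\le K_2(p,\theta)\big(\frac{t-\tilde t_0}{\tau}\big)^{-\theta}\|v(\cdot,\tilde t_0)\|_{L^p}$ and $\|v(\cdot,\tilde t_0)\|_{L^p}\le|\Omega|^{1/p}\|v(\cdot,\tilde t_0)\|_{L^\infty}$. For the Duhamel term, \eqref{eq-estimate-01} applied with exponent $\theta-\eta\in(0,1)$ gives $\|A^{\theta-\eta}e^{-\frac{t-s}{\tau}A}\phi\|_{L^p}\le K_2(p,\theta-\eta)\big(\frac{t-s}{\tau}\big)^{-(\theta-\eta)}\|\phi\|_{L^p}$; taking $\phi=A^\eta u(\cdot,s)$, pulling $\sup_{\tilde t_0\le s\le t}\|A^\eta u(\cdot,s)\|_{L^p}$ out of the integral, and substituting $\sigma=\frac{t-s}{\tau}$, the remaining factor is $\mu\int_0^\infty e^{-(\lambda-\alpha)\sigma}\sigma^{-(\theta-\eta)}\,d\sigma$, which is finite precisely because $\lambda-\alpha>0$ (i.e.\ $\alpha<\lambda$) and $\theta-\eta<1$ (i.e.\ $\theta<1+\eta$). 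Adding the two contributions and using the embedding yields \eqref{new-new-eq3} with a suitable $C_3(p,\theta,\eta)$.

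The computation is routine once the pieces are assembled; the step I would be most careful about is the factorization $\Delta-\lambda I=(\Delta-\alpha I)-(\lambda-\alpha)I$, which is what transfers the exponential decay onto the sectorial operator $A=-\Delta+\alpha I$ so that \eqref{eq-estimate-01} applies, followed by the bookkeeping check that both singular exponents ($\theta$ for the boundary term, $\theta-\eta$ for the Duhamel term) are $<1$ so that the kernels are locally integrable. Both are exactly what the stated constraints on $\alpha,\theta,\eta,p$ guarantee, so I do not foresee a genuine obstacle — this lemma is the easy half of the $v$-regularity package, the real work being its combination, in the subsequent steps, with Lemmas~\ref{persisetence-full-lemma-00}--\ref{persisetence-full-lemma-01} (and their corollaries) and the parabolic comparison principle.
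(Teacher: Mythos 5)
Your proof is correct and follows exactly the same route as the paper's: variation of constants for $v$, the factorization $\Delta-\lambda I=(\Delta-\alpha I)-(\lambda-\alpha)I$ to transfer the exponential decay onto the sectorial operator $A$, the splitting $A^\theta=A^{\theta-\eta}A^\eta$ inside the Duhamel integral, the fractional-power estimate \eqref{eq-estimate-01} for both pieces, and the embedding $D(A^\theta)\hookrightarrow W^{2,\infty}(\Omega)$. The bookkeeping observations you flag (that $\theta>1$ and $\theta-\eta\in(0,1)$, and that $\lambda-\alpha>0$ makes the singular kernels integrable on $(0,\infty)$) are precisely what the paper relies on.
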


\begin{proof}
   Note that, for $t> \tilde t_0$, we have
$$v(\cdot,t)=e^{\frac{1}{\tau}{ (t-\tilde t_0)(\Delta-\lambda I)}}v(\cdot,\tilde t_0)+\frac{\mu}{\tau}\int_{\tilde t_0}^te^{\frac{1}{\tau}{(t-s)(\Delta-\lambda I)}}u(\cdot,s)ds.$$
This implies that there is $\tilde C>0$ such that
\begin{align}
\label{step3-eq01}
&\|v(\cdot,t)\|_{W^{2,\infty}(\Omega)}\nonumber\\
 &\leq \tilde C\|A^\theta v(\cdot,t)\|_p \nonumber\\
&\leq \tilde Ce^{-(\lambda-\alpha)(\frac{t-\tilde t_0}{\tau})}\|A^\theta e^{{ -(\frac{t-\tilde t_0}{\tau})A}}v(\cdot,\tilde t_0)\|_{L^p(\Omega)}\nonumber\\
&\,\,\,\, +\tilde C \frac{\mu}{\tau}\int_{\tilde t_0}^t \|A^{(\theta-\eta)} e^{{-(\frac{t-s}{\tau})A}}A^\eta u(\cdot,s)\|_{L^p(\Omega)}e^{-(\lambda-\alpha)(\frac{t-s}{\tau})}ds \nonumber\\
&\leq \tilde CK_2(p,\theta)(\frac{t-\tilde t_0}{\tau})^{-\theta}e^{-(\lambda-\alpha)(\frac{t-\tilde t_0}{\tau})}\|v(\cdot,\tilde t_0)\|_{L^p(\Omega)}\nonumber\\
&\,\, \,\,+\tilde C\frac{\mu}{\tau}K_2(p,\theta-\eta)\int_{\tilde t_0}^t (\frac{t-s}{\tau})^{-(\theta-\eta)}e^{-(\lambda-\alpha)(\frac{t-s}{\tau})}\|A^\eta u(\cdot,s)\|_{L^p(\Omega)}ds \nonumber\\
&\leq \tilde CK_2(p,\theta)(\frac{t-\tilde t_0}{\tau})^{-\theta}e^{-(\lambda-\alpha)(\frac{t-\tilde t_0}{\tau})}\|v(\cdot,\tilde t_0)\|_{L^p(\Omega)}\nonumber\\
&\,\, \,\, +\tilde C\frac{\mu}{\tau}K_2(p,\theta-\eta)\sup_{\tilde t_0\le s\le t} \|A^\eta u(\cdot,s)\|_{L^p(\Omega)}\int_{\tilde t_0}^t (\frac{t-s}{\tau})^{-(\theta-\eta)}e^{-(\lambda-\alpha)(\frac{t-s}{\tau})}ds.
\end{align}
The lemma then follows.
\end{proof}

\begin{corollary}
\label{cor3}
There is $\tilde C_3$ such that
for any  $t_0 \in \mathbb{R}$, and $(u_0,v_0) \in C(\bar{\Omega})\times W^{1,\infty}(\Omega)$ with $u_0,v_0 \geq 0,$  there is $t^5=t^5(u_0,v_0)\ge t^4(u_0,v_0)$ satisfying that
\begin{equation}
\label{new-new-eq3-1}
\|v(\cdot,t)\|_{W^{2,\infty}(\Omega)}\le\tilde C_3\quad \forall \, t\ge t_0+t^5.
\end{equation}
\end{corollary}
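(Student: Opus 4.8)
The plan is to deduce Corollary~\ref{cor3} directly from Lemma~\ref{persisetence-full-lemma-03} and Corollary~\ref{cor2}, following the same pattern by which Corollaries~\ref{cor1} and \ref{cor2} were obtained from their preceding lemmas. First I would fix, once and for all, admissible parameters: $\eta\in(0,\tfrac12)$, $\alpha\in(0,\min\{1,\lambda\})$, $p>n$ and $\theta>0$ with $2\theta-\tfrac np>2$ and $\theta<1+\eta$, so that Lemma~\ref{persisetence-full-lemma-03} and Corollary~\ref{cor2} both apply with these choices. By Corollary~\ref{cor2}, for the given $t_0,u_0,v_0$ there is $t^4=t^4(u_0,v_0)\ge t^3(u_0,v_0)$ with $\sup_{t_0+t^4\le s\le t}\|A^\eta u(\cdot,s)\|_{L^p(\Omega)}\le\tilde C_2(p,\eta)$ for every $t\ge t_0+t^4$. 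Note $t^4\ge t^3>t^2>t^1>0$, so $\tilde t_0:=t_0+t^4>t_0$ is admissible in Lemma~\ref{persisetence-full-lemma-03}.

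Next I would invoke Lemma~\ref{persisetence-full-lemma-03} with $\tilde t_0=t_0+t^4(u_0,v_0)$, which gives for all $t> t_0+t^4$
\begin{align*}
\|v(\cdot,t)\|_{W^{2,\infty}(\Omega)}
&\le C_3(p,\theta,\eta)\Big(\tfrac{t-t_0-t^4}{\tau}\Big)^{-\theta}e^{-(\lambda-\alpha)\frac{t-t_0-t^4}{\tau}}\,\|v(\cdot,t_0+t^4)\|_{L^\infty(\Omega)}\\
&\quad+C_3(p,\theta,\eta)\,\tilde C_2(p,\eta).
\end{align*}
Since $T_{\max}=\infty$ by Theorem~\ref{thm-global-000}, Lemma~\ref{local-existence} guarantees $v(\cdot,t_0+t^4)\in C(\bar\Omega)$, so $\|v(\cdot,t_0+t^4)\|_{L^\infty(\Omega)}$ is a finite number (depending on $u_0,v_0$). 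Because $\lambda>\alpha$, the map $x\mapsto x^{-\theta}e^{-(\lambda-\alpha)x}$ tends to $0$ as $x\to+\infty$, hence there exists $t^5=t^5(u_0,v_0)\ge t^4(u_0,v_0)$ for which the first term above is $\le 1$ whenever $t\ge t_0+t^5$. Then $\tilde C_3:=1+C_3(p,\theta,\eta)\tilde C_2(p,\eta)$, which does not depend on $t_0,u_0,v_0$, satisfies $\|v(\cdot,t)\|_{W^{2,\infty}(\Omega)}\le\tilde C_3$ for all $t\ge t_0+t^5$, which is the claim.

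I do not expect a genuine obstacle here; the only delicate point is the interplay between initial-data dependence and uniformity. The quantity $\|v(\cdot,t_0+t^4)\|_{L^\infty(\Omega)}$ depends on the initial data, but the exponential-in-time decay coming from the semigroup generated by $\tfrac1\tau(\Delta-\lambda I)$ (together with $\lambda>\alpha$) forces that contribution below the prescribed threshold after a waiting time one is allowed to let depend on $(u_0,v_0)$, while the surviving term $C_3\tilde C_2$ is universal; this is exactly the mechanism already used in Corollaries~\ref{cor1} and \ref{cor2}. One should also observe that the prefactor $(\tfrac{t-t_0-t^4}{\tau})^{-\theta}$ blows up as $t\downarrow t_0+t^4$, which is harmless since only large $t$ matters, but it is the reason $t^5$ must be taken strictly larger than $t^4$.
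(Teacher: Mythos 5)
Your proposal is correct and follows exactly the same route as the paper, which simply cites the uniform bound $\|A^\eta u(\cdot,t)\|_{L^p}\le\tilde C_2(p,\eta)$ from Corollary~\ref{cor2} and then plugs it into the estimate \eqref{new-new-eq3} of Lemma~\ref{persisetence-full-lemma-03} with $\tilde t_0=t_0+t^4(u_0,v_0)$, letting the decaying prefactor $\big(\tfrac{t-\tilde t_0}{\tau}\big)^{-\theta}e^{-(\lambda-\alpha)(\tfrac{t-\tilde t_0}{\tau})}$ absorb the data-dependent factor $\|v(\cdot,\tilde t_0)\|_{L^\infty}$ after a further waiting time $t^5(u_0,v_0)$. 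Your observations that $\lambda>\alpha$ is what drives the decay and that the singular prefactor near $t=\tilde t_0$ is irrelevant are exactly the points the paper leaves implicit.
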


\begin{proof}
It follows from \eqref{new-new-eq2-1} and \eqref{new-new-eq3}.
\end{proof}

We now prove Theorem  \ref{thm-persistence-entire solution-000}.

\begin{proof}[Proof of Theorem \ref{thm-persistence-entire solution-000}]
We divide the proof into five steps. For given $t_0\in\RR$,  $(u_0,v_0) \in C(\bar{\Omega})\times W^{1,\infty}(\Omega)$ with $u_0,v_0 \geq 0,$  and $\epsilon>0$, let $t^5(u_0,v_0)$ be as in Corollary \ref{cor3}. Let
\begin{equation}
\label{c-start-eq}
C^*=C_3(p,\theta,\eta) \big(1+3C_2(p,\eta) + 2 C_1(p) C_2(p,\eta)\big)
\end{equation}
 and
 \begin{equation}
 \label{epsilon-star-eq}
 \epsilon^*:=\frac{a_{0,\inf}}{a_{1,\sup}+\chi C^*+|\Omega|(a_{2,\sup})_+}.
 \end{equation}
Fix $0<\epsilon_0<\min\{\epsilon^*,1\}$. Let $T^*>0$ be such that
$$
\max\Big\{(\frac{T^*}{\tau})^{-\frac{1}{2}}e^{-(\frac{T^*}{\tau})}\tilde C_3,  (T^*)^{-\eta}e^{-(1-\alpha )(T^*)}M_2,
(\frac{T^*}{\tau})^{-\theta}e^{-(\lambda-\alpha)(\frac{T^*}{\tau})}\tilde C_3\Big\}<\epsilon_0.
$$

\medskip

\noindent{\bf Step 1.} {\it   In this step, we prove that for any $t_0\in\RR$ and  $(u_0,v_0) \in C(\bar{\Omega})\times W^{1,\infty}(\Omega)$ with $u_0,v_0 \geq 0$ and $u_0\not \equiv 0$, if $\|u(\cdot,t)\|_\infty<\epsilon_0$
for all $t$ satisfying  $\tilde t_0 \le t\le \tilde t_1$ for some $\tilde t_0\ge t_0+t^5(u_0,v_0)$, then}
$$
\|v(\cdot,t)\|_{W^{2,\infty}}<C^*\epsilon_0\quad \forall \, \tilde t_0+T^*\le t\le \tilde t_1.
$$

First, by \eqref{m2-eq} and \eqref{new-new-eq3-1},
$$
\|u(\cdot,t)\|_\infty\le M_2\quad {\rm and}\quad \|v(\cdot,t)\|_{W^{2,\infty}}\le \tilde C_3\quad \forall\, t\ge \tilde  t_0.
$$
Then, by the definition of $T^*$, \eqref{new-new-eq1}, \eqref{new-new-eq2}, and \eqref{new-new-eq3},
for any $t\ge \tilde t_0+T^*$,
\begin{align}
\label{new-new-eq1-2}
\|\nabla v(t)\|_{L^p(\Omega)}\le  C_1(p)\epsilon_0 +  C_1(p)\sup_{\tilde t_0\le s\le t}\|u(\cdot,s)\|_\infty,
\end{align}
\begin{align}
 \label{new-new-eq2-2}
 &\|A^\eta u(\cdot,t)\|_{L^p(\Omega)}\nonumber\\
 &\le C_2(p,\eta) \epsilon_0+C_2(p,\eta)\sup_{\tilde t_0\le s\le t}\|u(\cdot,s)\|_\infty\big(1+\sup_{\tilde t_0\le s\le t}\|\nabla v(\cdot,s)\|_{L^p(\Omega)}+\sup_{\tilde t_0\le s\le t}\|u(\cdot,s)\|_\infty\big),
 \end{align}
 and
\begin{align}
\label{new-new-eq3-2}
\|v(\cdot,t)\|_{W^{2,\infty}(\Omega)}&\le  C_3(p,\theta,\eta)\epsilon_0 +C_3(p,\theta,\eta)\sup_{\tilde t_0\le s\le t} \|A^\eta u(\cdot,s)\|_{L^p(\Omega)}.
\end{align}
By \eqref{new-new-eq1-2}-\eqref{new-new-eq3-2}, we have
$$
\|v(\cdot,t)\|_{W^{2,\infty}}<C^*\epsilon_0\quad \forall \, t\in [\tilde t_0+T^*,\tilde t_1].
$$

\medskip

\noindent {\bf Step 2.} {\it   In this step, we prove that   for any $t_0\in\RR$ and  $(u_0,v_0) \in C(\bar{\Omega})\times W^{1,\infty}(\Omega)$ with $u_0,v_0 \geq 0$ and $u_0\not \equiv 0$, there is $t_n\to\infty$ such that
$\|u(\cdot,t_n;t_0,u_0,v_0)\|_\infty>\epsilon_0$.}

\medskip

We prove it by contradiction.
Assume that there is $\tilde t_0\ge t_0+t^5(u_0,v_0)$ such that $\|u(\cdot,t)\|_\infty\le \epsilon_0$ for all $t\ge \tilde t_0$.
By Step 1,
\begin{equation}\label{persisetence-full-lemma-05-eq00}
\|v(\cdot,t)\|_{W^{2,\infty}(\Omega)}\leq C^* \epsilon_0 \quad \forall\,  t>\tilde t_0+T^*.
\end{equation}
Thus  by the first equation of \eqref{u-v-eq00}, we get for $t>\tilde t_0+T^*$ that
\begin{align}
\label{persisetence-full-lemma-05-eq01}
  u_t & =\Delta u-\chi\nabla\cdot (u \nabla v)+u\Big(a_0(t,x)-a_1(t,x)u-a_2(t,x)\int_{\Omega}u\Big) \nonumber\\
   & =\Delta u-\chi\nabla u\cdot \nabla v-\chi u\Delta v+u\Big(a_0(t,x)-a_1(t,x)u-a_2(t,x)\int_{\Omega}u\Big) \nonumber\\
   & =\Delta u-\chi\nabla u\cdot \nabla v+u\Big(a_0(t,x)-\chi\Delta v-a_1(t,x)u-a_2(t,x)\int_{\Omega}u\Big)\nonumber \\
   & \geq \Delta u-\chi\nabla u\cdot \nabla v+u\Big(a_{0,\inf}-\chi\Delta v-a_{1,\sup}u-(a_{2,\sup})_+\int_{\Omega}u\Big)\nonumber \\
    & \geq \Delta u-\chi\nabla u\cdot \nabla v+u\Big(a_{0,\inf}-\chi C^*\epsilon_0 -a_{1,\sup}\epsilon_0-|\Omega|(a_{2,\sup})_+\epsilon_0\Big).
\end{align}
Note that
\begin{equation}\label{persisetence-full-lemma-05-eq02}
\delta^*:=\inf u(\cdot,\tilde t_0+T^*)>0.
\end{equation}
Therefore by the comparison principle for parabolic equations, we get
$$
u(x,t)\geq \delta^* e^{(a_{0,\inf}-(a_{1,\sup}+\chi C^*+|\Omega|(a_{2,\sup})_+)\epsilon)(t-\tilde t_0-T^*)}, \quad \forall t>\tilde t_0+T^*.
$$
Note that  $\epsilon_0<\frac{a_{0,\inf}}{a_{1,\sup}+\chi C^*+|\Omega|(a_{2,\sup})_+}$.  We get as $t \to \infty$ in the above equation  that $\lim_{t \to \infty}\|u(\cdot,t)\|_\infty=\infty$, which is a contradiction. Hence the statement in step 2 is true.

\medskip

\noindent {\bf Step 3.}  {\it   In this step, we prove that  there
 is $\delta_{\epsilon_0}>0$ such that for any given $t_0\in\RR$ and $(u_0,v_0) \in C(\bar{\Omega})\times W^{1,\infty}(\Omega)$ with $u_0,v_0 \geq 0$ , for any $t>t_0+{t}^5(u_0,v_0)+1$, if $\sup_{x\in\Omega}u(x,t)=\epsilon_0$,
then $\inf_{x\in\Omega, s\in [t,t+T^*]} u(x,s)\ge \delta_{\epsilon_0}.$
}

\medskip

 We prove it by contradiction. Suppose by contradiction that the statement in Step 2 does not hold. Then there exist   $u_n \in C^0(\bar{\Omega}),$ $v_n \in W^{1,\infty}(\Omega),$ $t_{0n}, t_n, \tilde t_n \in \mathbb{R}$ with $t_n>t_{0n}+{t}^5(u_n,v_n)+1$,
  $\tilde t_n\in [t_n,t_n+T^*]$, $x_n, x^*_n  \in \Omega$ such that
 \begin{equation}\label{persisetence-full-lemma-04-eq-00}
\lim_{n \to \infty}u(x_n,t_n;t_{0n},u_n,v_n)=\epsilon_0,
\end{equation}
 and
 \begin{equation}\label{persisetence-full-lemma-04-eq-01}
\lim_{n \to \infty}u( x^*_n,\tilde t_n;t_{0n},u_n,v_n)=0.
\end{equation}
Since  $t_n>t_{0n}+t^5(u_n,v_n)+1$, by Lemmas \ref{persisetence-full-lemma-00}- \ref{persisetence-full-lemma-03} and Corollaries \ref{cor1}-\ref{cor3}, without loss of
generality, we may assume that
$$
u(\cdot,t_n-1;t_{0n},u_n,v_n)\to u_0^*\quad \text{in}\quad C^0(\bar{\Omega}) \quad \text{and}\quad v(\cdot,t_n-1;t_{0n},u_n,v_n)\to v^*_0\quad\text{in}\quad W^{1,\infty}(\bar{\Omega})
$$
and
$$
u(\cdot,t_n;t_{0n},u_n,v_n)\to u^*\quad \text{in}\quad C^0(\bar{\Omega}) \quad \text{and}\quad v(\cdot,{ t_n};t_{0n},u_n,v_n)\to v^*\quad\text{in}\quad W^{1,\infty}(\bar{\Omega})
$$
as $n\to\infty$. Without loss of generality, we may also assume that
\vspace{-0.05in}$$
a_i(t+t_n-1,\cdot)\to a_i^*(t,x)
\vspace{-0.05in}$$
as $n\to\infty$ locally uniformly in $(t,x)\in\RR\times\bar\Omega$. Then by Lemma \ref{dependence-on-parameters-lm} {{together}  with the generalized Gronwall's inequality (see \cite[Lemma 7.1.1]{DH77}}), we have
\begin{equation}\label{persisetence-full-lemma-04-eq-02}
(u(\cdot,t+t_n-1;t_{0n},u_n,v_n), v(\cdot,t+t_n-1;t_{0n},u_n,v_n))\to  (u^*(\cdot,t;0, u^*_0, v^*_0),v^*(\cdot,t;0, u^*_0, v^*_0))
\end{equation}
as $n\to\infty$ {  locally uniformly in $(t,x)\in\RR\times\bar\Omega$}, where $(u^*(x,t;0,u^*_0,v^*_0), v^*(x,t;0,u^*_0,v^*_0))$  is the solution of \eqref{u-v-eq00} with $a_i(t,x)$ being replaced by $a^*(t,x)$. This implies that
$$
(u^*(x,1;0,u^*_0,v^*_0), v^*(x,1;0,u^*_0,v^*_0))=(u^*(x),v^*(x)).
$$
By \eqref{persisetence-full-lemma-04-eq-00} and the comparison principle for parabolic equations, we have
$$
\inf_{x\in\Omega} u^*(x)>0.
$$
This together with the comparison principle for parabolic equations implies that
$$
\inf_{x\in\Omega,t\in [1,1+T^*]} u^*(x,t;0,u^*_0,v^*_0)>0.
$$
{Let
$$\tilde \epsilon=\frac{\inf_{x\in\Omega,t\in [1,1+T^*]} u^*(x,t;0,u^*_0,v^*_0)}{2}.
 $$
Consider $(x,t) \in \bar\Omega \times [1,1+T^*] $(which is a compact subset of $\bar\Omega\times \RR$). By equation \eqref{persisetence-full-lemma-04-eq-02} there exists $\tilde n=n(\tilde \epsilon)$ such that
\begin{align*}
u(x,t+t_n-1;t_{0n},u_n,v_n)&\geq u^*(x,t;0, u^*_0, v^*_0)-\tilde\epsilon\\
&\geq \inf_{x\in\Omega,t\in [1,1+T^*]} u^*(x,t;0,u^*_0,v^*_0)-\tilde\epsilon\\
&=\frac{\inf_{x\in\Omega,t\in [1,1+T^*]} u^*(x,t;0,u^*_0,v^*_0)}{2}\,\,\, \forall (x,t) \in \bar\Omega \times [1,1+T^*]\, \,\text{and}\, \,\forall\, n>\tilde n.
\end{align*}
Therefore
$$
\inf_{x\in\Omega,t\in [1,1+T^*]} u(x,t+t_n-1;t_{0n},u_n,v_n)\ge \frac{1}{2}\inf_{x\in\Omega,t\in [1,1+T^*]} u^*(x,t;0,u^*_0,v^*_0)\quad \forall n>\tilde n.
$$
Note that
$$\inf_{x\in\Omega,t\in [1,1+T^*]} u(x,t+t_n-1;t_{0n},u_n,v_n)=\inf_{x\in\Omega,t\in [t_n,t_n+T^*]} u(x,t;t_{0n},u_n,v_n).
 $$
 This implies that
 $$
\liminf_{n\to\infty} \inf_{x\in\Omega,t\in [t_n,t_n+T^*]} u(x,t;t_{0n},u_n,v_n)\ge \tilde\epsilon,
 $$
 which contradicts
 \eqref{persisetence-full-lemma-04-eq-01}.
 Thus the above claim follows.
}
\medskip

\noindent {\bf Step 4.}  {\it Let $T^{**}>T^*$ be such that
$$
\delta_{\epsilon_0} e^{(a_{0,\inf}-(a_{1,\sup}+\chi C^*+|\Omega|(a_{2,\sup})_+)\epsilon^*)(T^{**}-T^*)}\ge \epsilon_0.
$$
 In this step, we prove that  for any $t_0\in\RR$ and  $(u_0,v_0) \in C(\bar{\Omega})\times W^{1,\infty}(\Omega)$ with $u_0,v_0 \geq 0$ and $u_0\not \equiv 0$, if  $t_2>t_1>t_0+t^5(u_0,v_0)$ are such that
$\|u(\cdot,t_i;t_0,u_0,v_0)\|=\epsilon_0$ $(i=1,2)$ and $\|u(\cdot,t;t_0,u_0,v_0)\|_\infty<\epsilon_0$ for $t\in (t_1,t_2)$, then
$t_2-t_1\le T^{*}+T^{**}$.}
\medskip

In fact, by the arguments in Steps 1-3, we have
$$
u(x,t)\geq \delta_{\epsilon_0} e^{(a_{0,\inf}-(a_{1,\sup}+\chi C^*+|\Omega|(a_{2,\sup})_+)\epsilon)(t-t_1-T^*)}, \quad \forall t\in (t_1+T^*,t_2).
$$
It then follows that
$$
t_2\le t_1+T^*+T^{**}.
$$

\medskip

\noindent{\bf Step 5.} {\it In this step, we prove that there is $\eta>0$ such that for any $t_0\in\RR,$ $u_0\in C^0(\bar \Omega),$ $v_0 \in W^{1,\infty}(\Omega)$ with $u_0\ge 0,$ and $u_0\not \equiv 0$, there is $\tau(u_0,v_0)>0$ such that
\begin{equation}
\label{new-new-eq4}
 u(x,t;t_0,u_0,v_0) \ge  \eta,\quad \forall\,\, t\ge t_0+\tau(u_0,v_0).
\end{equation}
}

 First, { by the arguments in  Step 3}, there is $\eta>0$ such that  for any given $t_0\in\RR$ and $(u_0,v_0) \in C(\bar{\Omega})\times W^{1,\infty}(\Omega)$ with $u_0,v_0 \geq 0$ , for any $t>t_0+{t}^5(u_0,v_0)+1$, if $\sup_{x\in\Omega}u(x,t)=\epsilon_0$,
then
$$\inf_{x\in\Omega, s\in [t,t+T^*+T^{**}]} u(x,s)\ge \eta.
$$

 Next, by Step 4, if $t>t_0+t^5(u_0,v_0)+T^*+T^{**}$ is such that
 $\|u(\cdot,t;t_0,u_0,v_0)\|<\epsilon_0$, then there are $t_1,t_2$ with
   $t_2>t>t_1>t_0+t^5(u_0,v_0)+1$  such that
$\|u(\cdot,t;t_0,u_0,v_0)\|_\infty<\epsilon_0$ for $t\in (t_1,t_2)$,  $\|u(\cdot,t_i;t_0,u_0,v_0)\|_\infty=\epsilon_0$ ($i=1,2$), and
$t_2-t_1\le T^{**}+T^*$. Hence
$$
\inf_{x\in\Omega,t\in [t_1,t_2]} u(x,t)\ge \eta.
$$

It then follows that the statement in Step 5 holds and the theorem is proved.
\end{proof}

\section{Strictly positive entire solutions}

In this section, we investigate the existence of strictly positive entire solutions of  \eqref{u-v-eq00} and prove Theorem \ref{thm-entire-solution-00}.

 \begin{proof}[Proof of Theorem \ref{thm-entire-solution-00}]
   First of all, fix $u_0 \in C^0(\Omega)$ , $v_0 \in W^{1,\infty}(\Omega)$ with $u_0,v_0 \geq 0$ and $\inf u_0>0.$
   By Theorem \ref{thm-persistence-entire solution-000},  there is $\tau(u_0,v_0)$ such that
   \begin{equation}
   \label{entire-solution-eq01}
   u(x,t;t_0,u_0,v_0)\ge \eta\quad \forall \, x\in\bar\Omega,\,\, t\ge t_0+\tau(u_0,v_0),\,\, t_0\in\RR.
   \end{equation}

Next, for each $n>\tau(u_0,v_0),$
 let
 $$u_n(\cdot)=u(\cdot,0;-n,u_0,v_0)\quad {\rm and}\quad
 v_n(\cdot)=v(\cdot,0;-n,u_0,v_0).
 $$
  Without loss of generality, we may assume that
  there exist $u^*_0 \in C^0(\Omega)$ and $v^*_0 \in W^{1,\infty}(\Omega)$ such that
  $$(u_n,v_n) \to (u^*_0, v^*_0)\quad {\rm in}\quad C^0(\Omega)\times W^{1,\infty}(\Omega)$$
  as $n \to \infty.$
  We then have $u(\cdot,t;-n,u_0,v_0)=u(\cdot,t;0,u(\cdot,0;-n,u_0,v_0),v(\cdot,0;-n,u_0,v_0))$ and for $t>0,$ $u(\cdot,t;-n,u_0,v_0)\to u^*(\cdot,t;0,u^*_0,v^*_0)$ as $n \to \infty.$

   We now claim that $u^*(\cdot,t;0,u^*_0,v^*_0)$ has backward extension. Indeed fix $m \in \mathbb{N},$ and for each $n>m+\tau(u_0,v_0),$ let $u^m_n(\cdot)=u(\cdot,-m;-n,u_0,v_0)$ and $v^m_n(\cdot)=v(\cdot,-m;-n,u_0,v_0).$ Then there exist $n_k\to \infty$,  $u^*_{m,0} \in C^0(\Omega)$ and $v^*_{m,0} \in W^{1,\infty}(\Omega)$ such that $u^m_{n_k} \to u^*_{m,0}$ and $v^m_{n_k} \to v^*_{m,0}$ as $n \to \infty.$ We have for $t>-m,$
   \begin{align*}
   u(\cdot,t;-n_k,u_0,v_0)&=u(\cdot,t;-m,u(\cdot,-m;-n_k,u_0,v_0),v(\cdot,-m;-n_k,u_0,v_0))\\
   &\to u^*_m(\cdot,t;-m,u^*_{m,0},v^*_{m,0})
    \end{align*}
    as $n_k \to \infty.$
    And for $t>0,$ $u^*(\cdot,t;0,u^*_0,v^*_0)=u^*_m(\cdot,t;-m,u^*_{m,0},v^*_{m,0}).$ Thus $u^*(\cdot,t;0,u^*_0,v^*_0)$ has backward extension.

   Finally,  fix $t\in \mathbb{R} $ and choose $m \in \mathbb{N}$ such that $t>-m+\tau(u_0,v_0).$ Then by equation \eqref{entire-solution-eq01}, we get
$$\eta  \leq u^*(\cdot,t;0,u^*_0,v^*_0)=u^*_m(\cdot,t;-m,u^*_{m,0},v^*_{m,0}),\,\, \forall x \in \bar{\Omega}.$$
This completes the proof.
 \end{proof}


\begin{thebibliography}{9}



\bibitem{NBYTMW05}
N. Bellomo, A. Bellouquid,  Y. Tao and M. Winkler, Toward a mathematical theory of Keller-Segel models of
pattern formation in biological tissues, {\it Math. Models Methods Appl. Sci.}, {\bf 25(9)} (2015), 1663-1763.


\bibitem{CoEsVi11} C. Conca, E. Espejo and K. Vilches, Remarks on the blow-up and global existence for a two
species chemotactic Keller-Segel system in $\R^2$, {\it  European J. Appl. Math.}, {\bf 22}(2011), 553-580.



{
\bibitem{ASV2009}
E. Espejo, A. Stevens and  J. J. L. Velázquez, Simultaneous finite time blow-up in a two-species model for chemotaxis, {\it Analysis}, {\bf  29} (2009), 317-338 .
}

\bibitem{DH77}
D. Henry,  Geometric Theory of Semilinear Parabolic Equations, Springer, Berlin,  Heidelberg, New York,
1977.



\bibitem{HVJ1997a}
M. A. Herrero and J. J. L. Velázquez,  A blow-up mechanism for a chemotaxis model,
{\it Ann. Sc. Norm. Super. Pisa, Cl. Sci.},  IV. Ser. 24 (1997), 633-683.

\bibitem{THKJP09}
T. Hillen and K. J. Painter, A users guide to PDE models for chemotaxis ,
{\it Math. Biol.}  {\bf  58} (2009) 183-217.

\bibitem{H03}
D. Horstmann,  From 1970 until present: The Keller-Segel model in chemotaxis and its consequences, {\it  I. Jber. DMW},
{\bf  105} (2003), 103-165.

\bibitem{JaW92}
W. J$\ddot{a}$ger and S.  Luckhaus,   On explosions of solutions to a system of partial differential equations modeling chemotaxis, {\it Trans. Amer. Math. Soc.}, {\bf  329} (1992), 819-824.

\bibitem{ISM04}
M. Isenbach,  Chemotaxis, Imperial College Press, London (2004).

\bibitem{ITBRS17}
T. B. Issa and  R. Salako, Asymptotic dynamics in a two-species chemotaxis model with non-local terms,
{\it  Discrete Contin. Dyn. Syst. Ser. B}, {\bf 22} (2017), no. 10, 3839-3874.


\bibitem{ITBWS16}
T. B. Issa and  W. Shen, Dynamics in chemotaxis models of parabolic-elliptic type on bounded domain with time and space dependent logistic sources,
 {\it SIAM J. Appl. Dyn. Syst.}, {\bf 16} (2017), no. 2, 926-973.



\bibitem{ITBWS17a}
T. B. Issa and  W. Shen, Persistence, coexistence and extinction in two species chemotaxis models on bounded heterogeneous environments,
{\it J. Dyn. Diff. Equat.}, {\bf 31} (2019), 1839-1871. https://doi.org/10.1007/s10884-018-9686-7


\bibitem{ITBWS17b}
T. B. Issa and  W. Shen, Uniqueness and stability of coexistence states in two species models with/without chemotaxis on bounded heterogeneous environments, {\it J. Dyn. Diff. Equat.}, {\bf 31} (2019), 2305-2338. https://doi.org/10.1007/s10884-018-9706-7


\bibitem{KS1970}
E. F. Keller and L. A.  Segel,   Initiation of slime mold aggregation viewed as an
instability, {\it  J. Theoret. Biol.}, {\bf  26} (1970),  399-415.


\bibitem{KS71}
E. F. Keller and L. A. Segel,  A model for chemotaxis, {\it  J.Theoret. Biol.}, {\bf  30} (1971), 225-234.


\bibitem{kuto_PHYSD}
 K. Kuto, K. Osaki,  T. Sakurai, and T. Tsujikawa, Spatial pattern formation in a chemotaxis-diffusion-growth model,
 {\it  Physica D}, {\bf 241} (2012), 1629-1639.


\bibitem{DAL1991}
D. A. Lauffenburger, Quantitative studies of bacterial chemotaxis and microbial population dynamics,
{\it Microbial. Ecol.}, {\bf 22}(1991),  175-85.



\bibitem{NT13}
M. Negreanu  and J. I. Tello, On a competitive system under chemotaxis effects with non-local terms,
{\it Nonlinearity},
{\bf 26} (2013), 1083-1103.



\bibitem{PaHi} K. J.  Painter and T. Hillen, Spatio-temporal chaos in a chemotaxis model.
 {\it  Physica D}, {\bf 240}(2011), 363-375


\bibitem{RBSWS17a}
 R. B. Salako and W. Shen, Global existence and asymptotic behavior of classical solutions to a parabolic-elliptic chemotaxis system with logistic source on $R^N$,
{\it J. Differential Equations}, {\bf  262(11)} (2017), 5635-5690.


\bibitem{TaoWin15}
Y. Tao and W. Winkler, Persistence of mass in a chemotaxis system with logistic source,
{\it  J.Differential Equations}, {\bf 259}(11)  (2015),  6142-6161.

\bibitem{TaoWin12}
Y. Tao and W. Winkler, Boundedness in a quasilinear parabolic-parabolic Keller-Segel system with subcritical sensitivity,
{\it  J.Differential Equations}, {\bf 252} (2012),  692-715.




\bibitem{TW07}
J. I. Tello and W.  Winkler, A chemotaxis system with logistic source, {\it Common Partial Diff. Eq.},
{\bf 32} (2007), 849-877.

\bibitem{TW12}
J. I. Tello  and M.  Winkler, Stabilization in two-species chemotaxis with a logistic source, {\it Nonlinearity},
{\bf 25} (2012). 1413-1425.






\bibitem{Win2010}
M. Winkler, Aggregation vs. global diffusive behavior in the higher-dimensional Keller-Segel model,
{\it  J.Differential Equations.}, {\bf 248} (2010),  2889-2905.

\bibitem{Win2010b}
M. Winkler, Boundedness in the higher-dimensional parabolic-parabolic chemotaxis system with logistic source,
{\it  Communications in Partial Differential Equations.}, {\bf 35(8)} (2010),  1516-1537.


\bibitem{W2011b}
M. Winkler,  Blow-up in a higher-dimensional chemotaxis system despite logistic growth restriction, {\it Journal of Mathematical Analysis and Applications}, {\bf 384} (2011), 261-272.


\bibitem{Win2013}
M. Winkler, Finite time blow-up in th higher-dimensional parabolic-parabolic Keller-Segel system,
{\it  J. Math. Pures Appl.}, {\bf 100} (2013),  748-767.


\bibitem{Win2014}
M. Winkler, Global asymptotic stability of constant equilibria in a fully parabolic chemotaxis system with strong logistic damping,
{\it  J. Differential Equations.}, {\bf 257} (2014),  1056-1077.




\bibitem{JZYYLGBXZ2018}
J. Zheng, Y. Y.  Li, G. Bao, and  X. Zou,   A new result for global existence and boundedness of solutions to a parabolic-parabolic Keller-Segel system with logistic source,
{\it  Journal of Mathematical Analysis and Applications.}, {\bf 462} (2018),  1-25.

\end{thebibliography}
\end{document}